\DeclareMathAlphabet{\mathdj}{U}{msb}{m}{n}
\newcommand{\E}{\ensuremath{\mathdj{E}}}
\newcommand{\Mn}{\ensuremath{\mathdj{M}}_n}
\newcommand{\N}{\ensuremath{\mathdj{N}}}
\newcommand{\Pg}{\ensuremath{\mathdj{P}}}
\newcommand{\R}{\ensuremath{\mathdj{R}}}
\newcommand{\Lp}{\ensuremath{\mathdj{L}}}
\newcommand{\Ltwo}{\ensuremath{\mathdj{L}}^2}
\newcommand{\argmin}{\mathop{\text{argmin}}}
\newcommand{\card}{\text{Card}}
\def\ind{{\mathchoice {\rm 1\mskip-4mu l} {\rm 1\mskip-4mu l} {\rm 1\mskip-4.5mu l} {\rm 1\mskip-5mu l}}}
\newcommand{\var}{\text{Var}}
\newcommand{\im}{\text{Im}}
\newcommand{\M}{\mathcal{M}}
\newcommand{\pen}{\text{pen}}
\newcommand{\rg}{\text{rk}}
\newcommand{\Tr}{\text{Tr}}
\newcommand{\tra}[1]{\, {}^t\! #1}
\newcommand{\lspan}{\text{Span}}
\newtheorem{prpstn}{Proposition}[section]
\newtheorem{thrm}{Theorem}[section]
\newtheorem{crllr}{Corollary}[section]
\newtheorem{lmm}{Lemma}[section]
\title{Model selection and estimation\\of a component in additive regression}
\author{Xavier Gendre\vspace{0.2cm}\\Institut de Math\'ematiques de Toulouse\\Universit\'e de Toulouse et CNRS (UMR 5219)\vspace{0.2cm}\\{\small {\tt Xavier.Gendre@math.univ-toulouse.fr}}}
\date{}
\begin{document}

\maketitle

\begin{abstract}Let $Y\in\R^n$ be a random vector with mean $s$ and covariance matrix $\sigma^2P_n\tra{P_n}$ where $P_n$ is some known $n\times n$-matrix. We construct a statistical procedure to estimate $s$ as well as under moment condition on $Y$ or Gaussian hypothesis. Both cases are developed for known or unknown $\sigma^2$. Our approach is free from any prior assumption on $s$ and is based on non-asymptotic model selection methods. Given some linear spaces collection $\{S_m,\ m\in\M\}$, we consider, for any $m\in\M$, the least-squares estimator $\hat{s}_m$ of $s$ in $S_m$. Considering a penalty function that is not linear in the dimensions of the $S_m$'s, we select some $\hat{m}\in\M$ in order to get an estimator $\hat{s}_{\hat{m}}$ with a quadratic risk as close as possible to the minimal one among the risks of the $\hat{s}_m$'s. Non-asymptotic oracle-type inequalities and minimax convergence rates are proved for $\hat{s}_{\hat{m}}$. A special attention is given to the estimation of a non-parametric component in additive models. Finally, we carry out a simulation study in order to illustrate the performances of our estimators in practice.\end{abstract}

\section{Introduction}

\subsection{Additive models}

The general form of a \textit{regression model} can be expressed as
\begin{equation}\label{a_gen_f}
Z=f(X)+\sigma\varepsilon
\end{equation}
where $X=(X^{(1)},\dots,X^{(k)})'$ is the $k$-dimensional vector of \textit{explanatory variables} that belongs to some product space $\mathcal{X}=\mathcal{X}_1\times\dots\times\mathcal{X}_k\subset\R^k$, the unknown function $f:\mathcal{X}\rightarrow\R$ is called \textit{regression function}, the positive real number $\sigma$ is a standard deviation factor and the real random noise $\varepsilon$ is such that $\E[\varepsilon\vert X]=0$ and $\E[\varepsilon^2\vert X]<\infty$ almost surely.

In such a model, we are interested in the behavior of $Z$ in accordance with the fluctuations of $X$. In other words, we want to explain the random variable $Z$ through the function $f(x)=\E[Z\vert X=x]$. For this purpose, many approaches have been proposed and, among them, a widely used is the \textit{linear regression}
\begin{equation}\label{a_lin_mod}
Z=\mu+\sum_{i=1}^k\beta_iX^{(i)}+\sigma\varepsilon
\end{equation}
where $\mu$ and the $\beta_i$'s are unknown constants. This model benefits from easy interpretation in practice and, from a statistical point of view, allows componentwise analysis. However, a drawback of linear regression is its lack of flexibility for modeling more complex dependencies between $Z$ and the $X^{(i)}$'s. In order to bypass this problem while keeping the advantages of models like (\ref{a_lin_mod}), we can generalize them by considering \textit{additive regression models} of the form
\begin{equation}\label{a_general_frame}
Z=\mu+\sum_{i=1}^kf_i(X^{(i)})+\sigma\varepsilon
\end{equation}
where the unknown functions $f_i:\mathcal{X}_i\rightarrow\R$ will be referred to as the \textit{components} of the regression function $f$. The object of this paper is to construct a data-driven procedure for estimating one of these components on a fixed design (\textit{i.e.} conditionally to some realizations of the random variable $X$). Our approach is based on nonasymptotic model selection and is free from any prior assumption on $f$ and its components. In particular, we do not make any regularity hypothesis on the function to estimate except to deduce uniform convergence rates for our estimators.

Models (\ref{a_general_frame}) are not new and were first considered in the context of input-output analysis by Leontief \cite{Leo47} and in analysis of variance by Scheff\'e \cite{Sch59}. This kind of model structure is widely used in theoretical economics and in econometric data analysis and leads to many well known economic results. For more details about interpretability of additive models in economics, the interested reader could find many references at the end of Chapter 8 of \cite{HarMulSpeWer04}.

As we mention above, regression models are useful for interpreting the effects of $X$ on changes of $Z$. To this end, the statisticians have to estimate the regression function $f$. Assuming that we observe a sample $\{(X_1,Z_1),\dots,(X_n,Z_n)\}$ obtained from model (\ref{a_gen_f}), it is well known (see \cite{Sto85}) that the optimal $\Ltwo$ convergence rate for estimating $f$ is of order $n^{-\alpha/(2\alpha+k)}$ where $\alpha>0$ is an index of smoothness of $f$. Note that, for large value of $k$, this rate becomes slow and the performances of any estimation procedure suffer from what is called the \textit{curse of the dimension} in literature. In this connection, Stone \cite{Sto85} has proved the notable fact that, for additive models (\ref{a_general_frame}), the optimal $\Ltwo$ convergence rate for estimating each component $f_i$ of $f$ is the one-dimensional rate $n^{-\alpha/(2\alpha+1)}$. In other terms, estimation of the component $f_i$ in (\ref{a_general_frame}) can be done with the same optimal rate than the one achievable with the model $Z'=f_i(X^{(i)})+\sigma\varepsilon$.

Components estimation in additive models has received a large interest since the eighties and this theory benefited a lot from the the works of Buja \textit{et al.} \cite{BujHasTib89}, Hastie and Tibshirani \cite{HasTib90}. Very popular methods for estimating components in (\ref{a_general_frame}) are based on \textit{backfitting} procedures (see \cite{BreFri85} for more details). These techniques are iterative and may depend on the starting values. The performances of these methods deeply depends on the choice of some convergence criterion and the nature of the obtained results is usually asymptotic (see, for example, the works of Opsomer and Ruppert \cite{OpsRup97} and Mammen, Linton and Nielsen \cite{MamLinNie99}). More recent non-iterative methods have been proposed for estimating marginal effects of the $X^{(i)}$ on the variable $Z$ (\textit{i.e.} how $Z$ fluctuates on average if one explanatory variable is varying while others stay fixed). These procedures, known as \textit{marginal integration estimation}, were introduced by Tj{\o}stheim and Auestad \cite{TjoAue94} and Linton and Nielsen \cite{LinNie95}. In order to estimate the marginal effect of $X^{(i)}$, these methods take place in two times. First, they estimate the regression function $f$ by a particular estimator $f^*$, called \textit{pre-smoother}, and then they average $f^*$ according to all the variables except $X^{(i)}$. The way for constructing $f^*$ is fundamental and, in practice, one uses a special kernel estimator (see \cite{RupWan94} and \cite{SevSpe99} for a discussion on this subject). To this end, one needs to estimate two unknown bandwidths that are necessary for getting $f^*$. Dealing with a finite sample, the impact of how we estimate these bandwidths is not clear and, as for backfitting, the theoretical results obtained by these methods are mainly asymptotic.

In contrast with these methods, we are interested here in nonasymptotic procedures to estimate components in additive models. The following subsection is devoted to introduce some notations and the framework that we handle but also a short review of existing results in nonasymptotic estimation in additive models.

\subsection{Statistical framework}

We are interested in estimating one of the components in the model (\ref{a_general_frame}) with, for any $i$, $\mathcal{X}_i=[0,1]$. To focus on it, we denote by $s:[0,1]\rightarrow\R$ the component that we plan to estimate and by $t^1,\dots,t^K:[0,1]\rightarrow\R$ the $K\geqslant1$ other ones. Thus, considering the design points $(x_1,y^1_1,\dots,y^K_1)',\dots,(x_1,y^1_1,\dots,y^K_1)'\in[0,1]^{K+1}$, we observe
\begin{equation}\label{a_frame}
Z_i=s(x_i)+\mu+\sum_{j=1}^Kt^j(y^j_i)+\sigma\varepsilon_i,\ i=1,\dots,n\ ,
\end{equation}
where the components $s,t^1,\dots,t^K$ are unknown functions, $\mu$ in an unknown real number, $\sigma$ is a positive factor and $\varepsilon=(\varepsilon_1,\dots,\varepsilon_n)'$ is an unobservable centered random vector with i.i.d. components of unit variance.

Let $\nu$ be a probability measure on $[0,1]$, we introduce the space of centered and square-integrable functions
$$\Lp^2_0([0,1],\nu)=\left\{f\in\Lp^2([0,1],\nu)\ :\ \int_0^1f(t)\nu(dt)=0\right\}\ .$$
Let $\nu_1,\dots,\nu_K$ be $K$ probability measures on $[0,1]$, to avoid identification problems in the sequel, we assume
\begin{equation}
s\in\Lp^2_0\left([0,1],\nu\right)\text{ and }t^j\in\Lp^2_0\left([0,1],\nu_j\right),\ j=1,\dots,K\ .
\end{equation}
This hypothesis is not restrictive since we are interested in how $Z=(Z_1,\dots,Z_n)'$ fluctuates with respect to the $x_i$'s. A shift on the components does not affect these fluctuations and the estimation proceeds up to the additive constant $\mu$.

The results described in this paper are obtained under two different assumptions on the noise terms $\varepsilon_i$, namely

\vspace{\baselineskip}
\textbf{($\text{H}_{\text{Gau}}$)} the random vector $\varepsilon$ is a standard Gaussian vector in $\R^n$,

\vspace{\baselineskip}
\noindent and

\vspace{\baselineskip}
\textbf{($\text{H}_{\text{Mom}}$)} the variables $\varepsilon_i$ satisfy the moment condition
\begin{equation}\label{m_cond}
\exists p>2\text{ such that }\forall i,\ \tau_p=\E\left[\vert\varepsilon_i\vert^p\right]<\infty\ .
\end{equation}
Obviously, \textbf{($\text{H}_{\text{Mom}}$)} is weaker than \textbf{($\text{H}_{\text{Gau}}$)}. We consider these two cases in order to illustrate how better are the results in the Gaussian case with regard to the moment condition case. From the point of view of model selection, we show in the corollaries of Section \ref{a_section_known} that we are allowed to work with more general model collections under \textbf{($\text{H}_{\text{Gau}}$)} than under \textbf{($\text{H}_{\text{Mom}}$)} in order to get similar results. Thus, the main contribution of the Gaussian assumption is to give more flexibility to the procedure described in the sequel.

So, our aim is to estimate the component $s$ on the basis of the observations (\ref{a_frame}). For the sake of simplicity of this introduction, we assume that the quantity $\sigma^2>0$ is known (see Section \ref{a_unknown_var} for unknown variance) and we introduce the vectors $s=(s_1,\dots,s_n)'$ and $t=(t_1,\dots,t_n)'$ defined by, for any $i\in\{1,\dots,n\}$,
\begin{equation}\label{a_def_s_t}
\begin{array}{ccc}
s_i=s(x_i) & \text{and} & \displaystyle{t_i=\mu+\sum_{j=1}^Kt^j(y^j_i)\ .}
\end{array}
\end{equation}
Moreover, we assume that we know two linear subspaces $E,F\subset\R^n$ such that $s\in E$, $t\in F$ and $E\oplus F=\R^n$. Of course, such spaces are not available to the statisticians in practice and, when we handle additive models in Section \ref{sect_estim_comp}, we will not suppose that they are known. Let $P_n$ be the projection onto $E$ along $F$, we derive from (\ref{a_frame}) the following regression framework
\begin{equation}\label{a_my_frame}
Y=P_nZ=s+\sigma P_n\varepsilon
\end{equation}
where $Y=(Y_1,\dots,Y_n)'$ belongs to $E=\im(P_n)\subset\R^n$.

The framework (\ref{a_my_frame}) is similar to the classical \textit{signal-plus-noise} regression framework but the data are not independent and their variances are not equal. Because of this uncommonness of the variances of the observations, we qualify (\ref{a_my_frame}) as an \textit{heteroscedastic} framework. The object of this paper is to estimate the component $s$ and we handle (\ref{a_my_frame}) to this end. The particular case of $P_n$ equal to the unit matrix has been widely treated in the literature (see, for example, \cite{BirMas01} for \textbf{($\text{H}_{\text{Gau}}$)} and \cite{Bar00} for \textbf{($\text{H}_{\text{Mom}}$)}). The case of an unknown but diagonal matrix $P_n$ has been studied in several papers for the Gaussian case (see, for example, \cite{ComRoz02} and \cite{Gen08}). By using cross-validation and resampling penalties, Arlot and Massart \cite{ArlMas09} and Arlot \cite{Arl10} have also considered the framework \eqref{a_my_frame} with unknown diagonal matrix $P_n$. Laurent, Loubes and Marteau \cite{LauLouMar11} deal with a known diagonal matrix $P_n$ for studying testing procedure in an inverse problem framework. The general case of a known non-diagonal matrix $P_n$ naturally appears in applied fields as, for example, genomic studies (see Chapters 4 and 5 of \cite{RobRodSch05}).

The results that we introduce in the sequel consider the framework (\ref{a_my_frame}) from a general outlook and we do not make any prior hypothesis on $P_n$. In particular, we do not suppose that $P_n$ is invertible. We only assume that it is a projector when we handle the problem of component estimation in an additive framework in Section \ref{sect_estim_comp}. Without loss of generality, we always admit that $s\in\im(P_n)$. Indeed, if $s$ does not belong to $\im(P_n)$, it suffices to consider the orthogonal projection $\pi_{P_n}$ onto $\im(P_n)^{\perp}$ and to notice that $\pi_{P_n}Y=\pi_{P_n}s$ is not random. Thus, replacing $Y$ by $Y-\pi_{P_n}Y$ leads to (\ref{a_my_frame}) with a mean lying in $\im(P_n)$.  For general matrix $P_n$, other approaches could be used. However, for the sake of legibility, we consider $s\in\im(P_n)$ because, for the estimation of a component in an additive framework, by construction, we always have $Y=P_nZ\in\im(P_n)$ as it will be specified in Section \ref{sect_estim_comp}.

We now describe our estimation procedure in details. For any $z\in\R^n$, we define the \textit{least-squares contrast} by
$$\gamma_n(z)=\Vert Y-z\Vert_n^2=\frac{1}{n}\sum_{i=0}^n(Y_i-z_i)^2\ .$$
Let us consider a collection of linear subspaces of $\im(P_n)$ denoted by $\mathcal{F}=\{S_m,\ m\in\M\}$ where $\M$ is a finite or countable index set. Hereafter, the $S_m$'s will be called the \textit{models}. Denoting by $\pi_m$ the orthogonal projection onto $S_m$, the minimum of $\gamma_n$ over $S_m$ is achieved at a single point $\hat{s}_m=\pi_mY$ called the \textit{least-squares estimator} of $s$ in $S_m$. Note that the expectation of $\hat{s}_m$ is equal to the orthogonal projection $s_m=\pi_ms$ of $s$ onto $S_m$. We have the following identity for the quadratic risks of the $\hat{s}_m$'s,

\begin{prpstn}\label{prop_risk}
Let $m\in\M$, the least-squares estimator $\hat{s}_m=\pi_mY$ of $s$ in $S_m$ satisfies
\begin{equation}\label{a_bias_var}
\E\left[\Vert s-\hat{s}_m\Vert_n^2\right]=\Vert s-s_m\Vert_n^2+\frac{\Tr(\tra{P_n}\pi_mP_n)}{n}\sigma^2
\end{equation}
where $\Tr(\cdot)$ is the trace operator.
\end{prpstn}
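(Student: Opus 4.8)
The plan is to prove \eqref{a_bias_var} by a direct bias--variance decomposition, using only that $\varepsilon$ is centered with covariance matrix $I_n$, a property shared by both $(\text{H}_{\text{Gau}})$ and $(\text{H}_{\text{Mom}})$. First I would plug the model equation $Y=s+\sigma P_n\varepsilon$ into $\hat{s}_m=\pi_mY$ and use linearity of $\pi_m$ together with $\pi_ms=s_m$ to get $\hat{s}_m=s_m+\sigma\pi_mP_n\varepsilon$, hence $s-\hat{s}_m=(s-s_m)-\sigma\pi_mP_n\varepsilon$. Expanding $\Vert\cdot\Vert_n^2=\frac1n\tra{(\cdot)}(\cdot)$ then produces three terms: the deterministic $\Vert s-s_m\Vert_n^2$, a cross term $-\frac{2\sigma}{n}\tra{(s-s_m)}\pi_mP_n\varepsilon$, and $\sigma^2\Vert\pi_mP_n\varepsilon\Vert_n^2$.

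Next I would take expectations term by term. The cross term vanishes because $s-s_m$ is deterministic and $\E[\varepsilon]=0$. For the quadratic term I would write $\Vert\pi_mP_n\varepsilon\Vert_n^2=\frac1n\tra{\varepsilon}\,\tra{P_n}\,\tra{\pi_m}\pi_mP_n\varepsilon$ and use that $\pi_m$, being an orthogonal projection, is symmetric and idempotent, so $\tra{\pi_m}\pi_m=\pi_m$ and thus $\Vert\pi_mP_n\varepsilon\Vert_n^2=\frac1n\tra{\varepsilon}\big(\tra{P_n}\pi_mP_n\big)\varepsilon$. Then I would invoke the standard identity $\E[\tra{\varepsilon}A\varepsilon]=\Tr\big(A\,\mathrm{Cov}(\varepsilon)\big)+\tra{(\E\varepsilon)}A\,\E\varepsilon$ valid for any fixed matrix $A$, which here reduces to $\Tr(A)$ with $A=\tra{P_n}\pi_mP_n$. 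Combining the three pieces gives exactly \eqref{a_bias_var}.

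There is no substantial obstacle in this argument; the only points deserving care are handling the transpose of the product $\pi_mP_n\varepsilon$ correctly (this is where symmetry and idempotency of $\pi_m$ enter) and observing that the quadratic-form expectation formula uses only the first two moments of $\varepsilon$, so the statement holds verbatim under both noise assumptions. As a sanity check one may rewrite the variance term via cyclicity of the trace as $\Tr(\pi_mP_n\tra{P_n}\pi_m)/n=\Tr(\tra{P_n}\pi_mP_n)/n$, which exhibits it as $\frac1n$ times the trace of a nonnegative symmetric matrix and hence confirms it is a genuine (nonnegative) variance contribution.
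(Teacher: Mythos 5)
Your proof is correct and follows essentially the same bias--variance decomposition as the paper, arriving at the same trace formula for the variance term. The only cosmetic difference is in how the cross term $\langle s-s_m,\pi_mP_n\varepsilon\rangle_n$ is disposed of: the paper notes it is zero pathwise by orthogonality (since $s-s_m\in S_m^{\perp}$ while $\pi_mP_n\varepsilon\in S_m$), whereas you kill it only in expectation via $\E[\varepsilon]=0$; both are valid and yield the identity.
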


\begin{proof}
By orthogonality, we have
\begin{equation}\label{prop_risk_decomp}
\Vert s-\hat{s}_m\Vert_n^2=\Vert s-s_m\Vert_n^2+\sigma^2\Vert\pi_mP_n\varepsilon\Vert_n^2\ .
\end{equation}
Because the components of $\varepsilon$ are independent and centered with unit variance, we easily compute
$$\E\left[\Vert\pi_mP_n\varepsilon\Vert_n^2\right]=\frac{\Tr(\tra{P_n}\pi_mP_n)}{n}\ .$$
We conclude by taking the expectation on both side of (\ref{prop_risk_decomp}).
\end{proof}

A ``good'' estimator is such that its quadratic risk is small. The decomposition given by (\ref{a_bias_var}) shows that this risk is a sum of two non-negative terms that can be interpreted as follows. The first one, called \textit{bias term}, corresponds to the capacity of the model $S_m$ to approximate the true value of $s$. The second, called \textit{variance term}, is proportional to $\Tr(\tra{P_n}\pi_mP_n)$ and measures, in a certain sense, the complexity of $S_m$. If $S_m=\R u$, for some $u\in\R^n$, then the variance term is small but the bias term is as large as $s$ is far from the too simple model $S_m$. Conversely, if $S_m$ is a ``huge'' model, whole $\R^n$ for instance, the bias is null but the price is a great variance term. Thus, (\ref{a_bias_var}) illustrates why choosing a ``good'' model amounts to finding a trade-off between bias and variance terms.

Clearly, the choice of a model that minimizes the risk (\ref{a_bias_var}) depends on the unknown vector $s$ and makes good models unavailable to the statisticians. So, we need a data-driven procedure to select an index $\hat{m}\in\M$ such that $\E[\Vert s-\hat{s}_{\hat{m}}\Vert_n^2]$ is close to the smaller $\Ltwo$-risk among the collection of estimators $\{\hat{s}_m,\ m\in\M\}$, namely
$$\mathcal{R}(s,\mathcal{F})=\inf_{m\in\M}\E\left[\Vert s-\hat{s}_m\Vert_n^2\right]\ .$$

To choose such a $\hat{m}$, a classical way in model selection consists in minimizing an empirical penalized criterion stochastically close to the risk. Given a \textit{penalty} function $\pen:\M\rightarrow\R_+$, we define $\hat{m}$ as any minimizer over $\M$ of the penalized least-squares criterion
\begin{equation}\label{pen_crit}
\hat{m}\in\argmin_{m\in\M}\left\{\gamma_n(\hat{s}_m)+\pen(m)\right\}\ .
\end{equation}
This way, we select a model $S_{\hat{m}}$ and we have at our disposal the \textit{penalized least-squares estimator} $\tilde{s}=\hat{s}_{\hat{m}}$. Note that, by definition, the estimator $\tilde{s}$ satisfies
\begin{equation}\label{prop_def_plse}
\forall m\in\M,\ \gamma_n(\tilde{s})+\pen(\hat{m})\leqslant\gamma_n(\hat{s}_m)+\pen(m)\ .
\end{equation}
To study the performances of $\tilde{s}$, we have in mind to upperbound its quadratic risk. To this end, we establish inequalities of the form
\begin{equation}\label{a_pseudo2}
\E\left[\Vert s-\tilde{s}\Vert_n^2\right]\leqslant C\inf_{m\in\M}\left\{\Vert s-s_m\Vert_n^2+\pen(m)\right\}+\frac{R}{n}
\end{equation}
where $C$ and $R$ are numerical terms that do not depend on $n$. Note that if the penalty is proportional to $\Tr(\tra{P_n}\pi_mP_n)\sigma^2/n$, then the quantity involved in the infimum is of order of the $\Ltwo$-risk of $\hat{s}_m$. Consequently, under suitable assumptions, such inequalities allow us to deduce upperbounds of order of the minimal risk among the collection of estimators $\{\hat{s}_m,\ m\in\M\}$. This result is known as an \textit{oracle inequality}
\begin{equation}\label{a_oracle}
\E\left[\Vert s-\tilde{s}\Vert_n^2\right]\leqslant C\mathcal{R}(s,\mathcal{F})=C\inf_{m\in\mathcal{M}}\E\left[\Vert s-\hat{s}_m\Vert_n^2\right]\ .
\end{equation}

This kind of procedure is not new and the first results in estimation by penalized criterion are due to Akaike \cite{Aka69} and Mallows \cite{Mal73} in the early seventies. Since these works, model selection has known an important development and it would be beyond the scope of this paper to make an exhaustive historical review of the domain. We refer to the first chapters of \cite{MacTsa98} for a more general introduction.

Nonasymptotic model selection approach for estimating components in an additive model was studied in few papers only. Considering penalties that are linear in the dimension of the models, Baraud, Comte and Viennet \cite{BarComVie01} have obtained general results for geometrically $\beta$-mixing regression models. Applying it to the particular case of additive models, they estimate the whole regression function. They obtain nonasymptotic upperbounds similar to (\ref{a_pseudo2}) on condition $\varepsilon$ admits a moment of order larger than 6. For additive regression on a random design and alike penalties, Baraud \cite{Bar02} proved oracle inequalities for estimators of the whole regression function constructed with polynomial collections of models and a noise that admits a moment of order 4. Recently, Brunel and Comte \cite{BruCom06} have obtained results with the same flavor for the estimation of the regression function in a censored additive model and a noise admitting a moment of order larger than 8. Pursuant to this work, Brunel and Comte \cite{BruCom08} have also proposed a nonasymptotic iterative method to achieve the same goal. Combining ideas from sparse linear modeling and additive regression, Ravikumar \textit{et al.} \cite{RavLiuLafWas09} have recently developed a data-driven procedure, called SpAM, for estimating a sparse high-dimensional regression function. Some of their empirical results have been proved by Meier, van de Geer and B\"uhlmann \cite{MeivdGBuh09} in the case of a sub-Gaussian noise and some sparsity-smoothness penalty.

The methods that we use are similar to the ones of Baraud, Comte and Viennet and are inspired from \cite{Bar00}. The main contribution of this paper is the generalization of the results of \cite{Bar00} and \cite{BarComVie01} to the framework \eqref{a_my_frame} with a known matrix $P_n$ under Gaussian hypothesis or only moment condition on the noise terms. Taking into account the correlations between the observations in the procedure leads us to deal with penalties that are not linear in the dimension of the models. Such a consideration naturally arises in heteroscedastic framework. Indeed, as mentioned in \cite{Arl10}, at least from an asymptotic point of view, considering penalties linear in the dimension of the models in an heteroscedastic framework does not lead to oracle inequalities for $\tilde{s}$. For our penalized procedure and under mild assumptions on $\mathcal{F}$, we prove oracle inequalities under Gaussian hypothesis on the noise or only under some moment condition.

Moreover, we introduce a nonasymptotic procedure to estimate one component in an additive framework. Indeed, the works cited above are all connected to the estimation of the whole regression function by estimating simultaneously all of its components. Since these components are each treated in the same way, their procedures can not focus on the properties of one of them. In the procedure that we propose, we can be sharper, from the point of view of the bias term, by using more models to estimate a particular component. This allows us to  deduce uniform convergence rates over H\"olderian balls and adaptivity of our estimators. Up to the best of our knowledge, our results in nonasymptotic estimation of a nonparametric component in an additive regression model are new.

\vspace{\baselineskip}
The paper is organized as follows. In Section \ref{a_section_known}, we study the properties of the estimation procedure under the hypotheses {\bf ($\text{H}_{\text{Gau}}$)} and {\bf ($\text{H}_{\text{Mom}}$)} with a known variance factor $\sigma^2$. As a consequence, we deduce oracle inequalities and we discuss about the size of the collection $\mathcal{F}$. The case of unknown $\sigma^2$ is presented in Section \ref{a_unknown_var} and the results of the previous section are extended to this situation. In Section \ref{sect_estim_comp}, we apply these results to the particular case of the additive models and, in the next section, we give uniform convergence rates for our estimators over H\"olderian balls. Finally, in Section \ref{a_section_simu1}, we illustrate the performances of our estimators in practice by a simulation study. The last sections are devoted to the proofs and to some technical lemmas.

\noindent{\bf Notations:} in the sequel, for any $x=(x_1,\dots,x_n)',\ y=(y_1,\dots,y_n)'\in\R^n$, we define
$$\Vert x\Vert_n^2=\frac{1}{n}\sum_{i=1}^nx_i^2\hspace{0.25cm}\text{and}\hspace{0.25cm}\langle x,y\rangle_n=\frac{1}{n}\sum_{i=1}^nx_iy_i\ .$$
We denote by $\rho$ the \textit{spectral norm} on the set $\Mn$ of the $n\times n$ real matrices as the norm induced by $\Vert\cdot\Vert_n$,
$$\forall A\in\Mn,\ \rho(A)=\sup_{x\in\R^n\setminus\{0\}}\frac{\Vert Ax\Vert_n}{\Vert x\Vert_n}\ .$$
For more details about the properties of $\rho$, see Chapter 5 of \cite{HorJoh90}.

\section{Main results}\label{a_section_known}

Throughout this section, we deal with the statistical framework given by (\ref{a_my_frame}) with $s\in\im(P_n)$ and we assume that the variance factor $\sigma^2$ is known. Moreover, in the sequel of this paper, for any $d\in\N$, we define $N_d$ as the number of models of dimension $d$ in $\mathcal{F}$,
$$N_d=\card\left\{m\in\M\ :\ \dim(S_m)=d\right\}\ .$$
We first introduce general model selection theorems under hypotheses {\bf ($\text{H}_{\text{Gau}}$)} and {\bf ($\text{H}_{\text{Mom}}$)}.

\begin{thrm}\label{G_main_theo}
Assume that {\bf ($\text{H}_{\text{Gau}}$)} holds and consider a collection of nonnegative numbers $\{L_m,m\in\M\}$. Let $\theta>0$, if the penalty function is such that
\begin{equation}\label{G_pen_theo_gen}
\pen(m)\geqslant(1+\theta+L_m)\frac{\Tr(\tra{P_n}\pi_mP_n)}{n}\sigma^2\text{ for all }m\in\M\ ,
\end{equation}
then the penalized least-squares estimator $\tilde{s}$ given by (\ref{pen_crit}) satisfies
\begin{equation}\label{G_pseudo_oracle}
\E\left[\Vert s-\tilde{s}\Vert_n^2\right] \leqslant C\inf_{m\in\M}\left\{\Vert s-s_m\Vert_n^2+\pen(m)-\frac{\Tr(\tra{P_n}\pi_mP_n)}{n}\sigma^2\right\}+\frac{\rho^2(P_n)\sigma^2}{n}R_n(\theta)
\end{equation}
where we have set
$$R_n(\theta)=C'\sum_{m\in\M}\exp\left(-\frac{C''L_m\Tr(\tra{P_n}\pi_mP_n)}{\rho^2(P_n)}\right)$$
and $C>1$ and $C',C''>0$ are constants that only depend on $\theta$.
\end{thrm}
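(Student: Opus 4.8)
The plan is to follow the classical model-selection strategy of Birgé--Massart, adapted to the heteroscedastic framework \eqref{a_my_frame}. Fix an arbitrary $m\in\M$. Starting from the defining inequality \eqref{prop_def_plse} for $\tilde s=\hat s_{\hat m}$, I would expand each least-squares contrast using $\gamma_n(z)=\|Y-z\|_n^2$ and the decomposition $Y=s+\sigma P_n\varepsilon$. After rearranging, this yields a bound of the shape
\begin{equation*}
\|s-\tilde s\|_n^2\leqslant \|s-s_m\|_n^2+\pen(m)-\pen(\hat m)+2\sigma\langle P_n\varepsilon,\tilde s-s_m\rangle_n\ .
\end{equation*}
The cross term is the heart of the matter. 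I would split $\tilde s-s_m$ into its components in $S_{\hat m}$ and $S_m$ and, for each index $m'\in\M$, control $\langle P_n\varepsilon,u\rangle_n$ uniformly over the unit ball of $S_{m'}$ by the quantity $\sup_{u\in S_{m'},\|u\|_n=1}\langle P_n\varepsilon,u\rangle_n=\|\pi_{m'}P_n\varepsilon\|_n$. The expectation of $\|\pi_{m'}P_n\varepsilon\|_n^2$ is exactly $\Tr(\tra{P_n}\pi_{m'}P_n)/n$ by the computation already done in the proof of Proposition \ref{prop_risk}.

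The main obstacle — and the step that forces the Gaussian hypothesis — is obtaining a \emph{uniform} (over $m'\in\M$) deviation bound for $\|\pi_{m'}P_n\varepsilon\|_n$ above its mean. Here I would write $\|\pi_{m'}P_n\varepsilon\|_n$ as the supremum of the Gaussian linear form $\varepsilon\mapsto\langle P_n\varepsilon,u\rangle_n$ over the unit ball of $S_{m'}$; this is a $1$-Lipschitz function of $\varepsilon$ (in the Euclidean norm) up to the factor $\rho(P_n)/\sqrt n$, since $\|\tra{P_n}u\|\leqslant\rho(P_n)\|u\|$. The Gaussian concentration inequality then gives, for each $m'$ and each $x>0$,
\begin{equation*}
\Pg\left(\|\pi_{m'}P_n\varepsilon\|_n\geqslant \sqrt{\tfrac{\Tr(\tra{P_n}\pi_{m'}P_n)}{n}}+\tfrac{\rho(P_n)}{\sqrt n}\sqrt{2x}\right)\leqslant e^{-x}\ .
\end{equation*}
Choosing $x$ of order $L_{m'}\Tr(\tra{P_n}\pi_{m'}P_n)/\rho^2(P_n)$ makes the deviation term absorbable into the $L_{m'}$-part of the penalty \eqref{G_pen_theo_gen}, and summing the exponential bounds over $m'\in\M$ produces precisely the remainder term $R_n(\theta)$.

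It remains to assemble the pieces. On the complement of the (small-probability) bad event, I would use the inequality $2ab\leqslant \eta a^2+\eta^{-1}b^2$ to split the cross term, tuning $\eta$ in terms of $\theta$ so that the part involving $\|s-\tilde s\|_n^2$ (and $\|\pi_{\hat m}P_n\varepsilon\|_n^2$) can be moved to the left-hand side and the part involving $\|\pi_{\hat m}P_n\varepsilon\|_n^2$ is dominated by $(1+\theta)\Tr(\tra{P_n}\pi_{\hat m}P_n)\sigma^2/n\leqslant\pen(\hat m)$, which then cancels the $-\pen(\hat m)$ term. Integrating over the bad event contributes an additional $O(\rho^2(P_n)\sigma^2/n)$ term (again via the Gaussian tail, since $\|s-\tilde s\|_n^2$ has at most polynomial growth in the relevant variables), which is folded into $R_n(\theta)$. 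Taking expectations and then the infimum over $m\in\M$ yields \eqref{G_pseudo_oracle} with $C>1$ and $C',C''$ depending only on $\theta$. The one point demanding care throughout is that every occurrence of $P_n$ must be estimated through $\rho(P_n)$ and $\Tr(\tra{P_n}\pi_{m'}P_n)$ only, never through any spectral decomposition of $P_n$, so that no hidden dependence on $n$ creeps into the constants.
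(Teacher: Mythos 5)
Your high-level plan is the standard one and much of it matches the paper: expand the contrasts, isolate a noise cross term, use $2ab\leqslant\eta a^2+\eta^{-1}b^2$ to absorb part of it into the left-hand side, control a chi-square--type quantity $\Vert\pi_{m'}P_n\varepsilon\Vert_n^2$ uniformly in $m'$ by concentration and a union bound, and sum the exponential tails to form $R_n(\theta)$. Your proposed use of Lipschitz Gaussian concentration for $\Vert\pi_{m'}P_n\varepsilon\Vert_n$ is a legitimate alternative to the paper's Lemma~\ref{LM} (a variant of Laurent--Massart); either gives a sub-Gaussian tail for the variance term with the right $\rho^2(P_n)$ and $\Tr(\tra{P_n}\pi_{m'}P_n)$ scaling.

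However, there is a genuine gap in how you handle the cross term. After writing $\tilde{s}=s_{\hat{m}}+\sigma\pi_{\hat{m}}P_n\varepsilon$, the noise cross term splits into three pieces (as in the paper): a term in $\langle s-s_m,P_n\varepsilon\rangle_n$ with $m$ fixed (harmless, centered), a term $\sigma\Vert\pi_{\hat{m}}P_n\varepsilon\Vert_n^2$ (which your chi-square control handles), and a \emph{bias--noise} term $\langle s-s_{\hat{m}},P_n\varepsilon\rangle_n$. Crucially, $s-s_{\hat{m}}=s-\pi_{\hat{m}}s$ is \emph{orthogonal} to $S_{\hat{m}}$, so the quantity $\sup_{u\in S_{m'},\Vert u\Vert_n=1}\langle P_n\varepsilon,u\rangle_n=\Vert\pi_{m'}P_n\varepsilon\Vert_n$ that you propose to use says nothing about it. The paper deals with it by introducing, for each $m'\in\M$, the \emph{deterministic} unit vector $u_{m'}=(s-s_{m'})/\Vert s-s_{m'}\Vert_n$; since $\langle u_{m'},P_n\varepsilon\rangle_n$ is a one-dimensional centered Gaussian with variance $\Vert\tra{P_n}u_{m'}\Vert_n^2/n\leqslant\rho^2(P_n)/n$, one applies a scalar Gaussian tail bound and a union bound over $m'$, in addition to the chi-square concentration you invoke. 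This second, one-dimensional deviation inequality and the $u_{m'}$ device are missing from your proposal, and without them the absorption step cannot produce a bound in which the only non-penalty quantities appearing are $\Vert s-s_m\Vert_n^2$.

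Moreover, your phrase ``split $\tilde{s}-s_m$ into its components in $S_{\hat{m}}$ and $S_m$'' cannot be taken literally. If it means $\tilde{s}-s_m=\tilde{s}+(-s_m)$, the resulting Cauchy--Schwarz bounds carry the factors $\Vert\tilde{s}\Vert_n$ and $\Vert s_m\Vert_n$, which are not controllable: they involve the \emph{absolute size} of the signal rather than the \emph{bias} $\Vert s-s_{\hat{m}}\Vert_n\leqslant\Vert s-\tilde{s}\Vert_n$ that is absorbable into the left-hand side. The working decomposition is $\tilde{s}-s_m=-(s-s_{\hat{m}})+(s-s_m)+\sigma\pi_{\hat{m}}P_n\varepsilon$, from which the factor $\Vert s-s_{\hat{m}}\Vert_n$ naturally appears in front of $\langle u_{\hat{m}},P_n\varepsilon\rangle_n$ and can be moved to the left via Pythagoras. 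So the missing idea is precisely the separate treatment of the bias direction: a family of deterministic unit vectors $u_{m'}$, a one-dimensional Gaussian deviation bound, and the resulting additional family $p_2(m')$ inside the penalty.
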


If the errors are not supposed to be Gaussian but only to satisfy the moment condition {\bf ($\text{H}_{\text{Mom}}$)}, the following upperbound on the $q$-th moment of $\Vert s-\tilde{s}\Vert_n^2$ holds.

\begin{thrm}\label{NG_th1}
Assume that {\bf ($\text{H}_{\text{Mom}}$)} holds and take $q>0$ such that $2(q+1)<p$. Consider $\theta>0$ and some collection $\{L_m,\ m\in\M\}$ of positive weights. If the penalty function is such that
\begin{equation}\label{NG_th1_pen}
\pen(m)\geqslant(1+\theta+L_m)\frac{\Tr(\tra{P_n}\pi_mP_n)}{n}\sigma^2\text{ for all }m\in\M\ ,
\end{equation}
then the penalized least-squares estimator $\tilde{s}$ given by (\ref{pen_crit}) satisfies
\begin{equation}\label{NG_th1_res}
\E\left[\Vert s-\tilde{s}\Vert_n^{2q}\right]^{1/q} \leqslant C\inf_{m\in\M}\left\{\Vert s-s_m\Vert_n^2+\pen(m)\right\}+\frac{\rho^2(P_n)\sigma^2}{n}R_n(p,q,\theta)^{1/q}
\end{equation}
where we have set $R_n(p,q,\theta)$ equal to
$$C'\tau_p\left[N_0 + \sum_{m\in\M:S_m\neq\{0\}}\left(1+\frac{\Tr(\tra{P_n}\pi_mP_n)}{\rho^2(\pi_mP_n)}\right)\left(\frac{L_m\Tr(\tra{P_n}\pi_mP_n)}{\rho^2(P_n)}\right)^{q-p/2}\right]$$
and $C=C(q,\theta)$, $C'=C'(p,q,\theta)$ are positive constants.
\end{thrm}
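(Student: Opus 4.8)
The plan is to reduce the moment-condition case (Theorem~\ref{NG_th1}) to a deterministic comparison supplemented by control of a few chi-square-like fluctuations, exactly as one does in the Gaussian case, but replacing Gaussian concentration by an $\Lp^q$-moment bound obtained from Rosenthal's (or Pinelis--Utev-type) inequality. First I would fix an arbitrary $m\in\M$ and use the definition \eqref{prop_def_plse} of $\tilde{s}=\hat{s}_{\hat{m}}$ to write $\gamma_n(\hat{s}_{\hat{m}})+\pen(\hat{m})\leqslant\gamma_n(\hat{s}_m)+\pen(m)$. Expanding $\gamma_n(z)=\Vert Y-z\Vert_n^2$ with $Y=s+\sigma P_n\varepsilon$ and using $\hat{s}_m=\pi_mY$, the usual algebra turns this into an inequality of the form
\begin{equation*}
\Vert s-\tilde{s}\Vert_n^2\leqslant\Vert s-s_m\Vert_n^2+\pen(m)-\pen(\hat{m})+2\sigma\langle P_n\varepsilon,\tilde{s}-s_m\rangle_n+\text{(cross terms)}\ ,
\end{equation*}
and then the standard trick is to bound the linear term $2\sigma\langle P_n\varepsilon,\hat{s}_{m'}-s_{m'}\rangle_n$ for each competing $m'$ by $\alpha\Vert s-\hat{s}_{m'}\Vert_n^2$ plus a term of the form $\sigma^2\alpha^{-1}\Vert\pi_{m'}P_n\varepsilon\Vert_n^2$, absorbing the quadratic part on the left. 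The key quantity to control is therefore, uniformly over $m'\in\M$,
\begin{equation*}
\chi_{m'}=\Vert\pi_{m'}P_n\varepsilon\Vert_n^2-\frac{\Tr(\tra{P_n}\pi_{m'}P_n)}{n}\ ,
\end{equation*}
whose centered fluctuation, once multiplied by $\sigma^2$, must be dominated by the surplus $L_{m'}\Tr(\tra{P_n}\pi_{m'}P_n)\sigma^2/n$ built into the penalty via \eqref{NG_th1_pen}.

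Second I would establish the moment bound on $\chi_{m'}$. Writing $A=\tra{P_n}\pi_{m'}P_n$ (a nonnegative symmetric matrix with $\rho(A)\leqslant\rho(\pi_{m'}P_n)^2\leqslant\rho(P_n)^2$ and $\Tr(A)=\Tr(\tra{P_n}\pi_{m'}P_n)$), one has $\Vert\pi_{m'}P_n\varepsilon\Vert_n^2=\tra{\varepsilon}A\varepsilon/n$, a quadratic form in independent variables with $p$ finite moments. The relevant inequality (a Rosenthal/Whittle-type bound for quadratic forms; see e.g. the arguments in \cite{Bar00}) gives
\begin{equation*}
\E\bigl[\bigl|\tra{\varepsilon}A\varepsilon-\Tr(A)\bigr|^{q'}\bigr]\leqslant C(q',\tau_p)\Bigl[\rho(A)^{q'}\Tr(A)^{q'}\vee\rho(A)^{q'}\Tr(A)\Bigr]
\end{equation*}
for $q'$ up to $p/2$, which after dividing by $n^{q'}$ and comparing with $(L_{m'}\Tr(A)\sigma^2/n)^{q'}$ produces, by Markov's inequality at exponent $q'$ close to $p/2$, a tail probability decaying like $(L_{m'}\Tr(A)/\rho(P_n)^2)^{-p/2+q}\cdot(\text{stuff})$; summing the resulting remainder over $m\in\M$ is precisely what yields the expression $R_n(p,q,\theta)$ in the statement, with the leading $N_0$ accounting for the zero-dimensional models where the argument degenerates. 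Third, feeding these bounds into the deterministic inequality and taking the infimum over $m\in\M$, I would collect the constants to obtain \eqref{NG_th1_res}; the passage from a bound on $\Vert s-\tilde{s}\Vert_n^2$ to one on its $q$-th moment requires keeping the random remainder inside the $\Lp^q$ norm throughout and applying the triangle inequality in $\Lp^q$ — hence the $1/q$-th powers in the statement — rather than taking expectations prematurely.

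The main obstacle will be the uniform control of the family $\{\chi_{m'}\}_{m'\in\M}$: unlike the Gaussian case, where exponential concentration makes the union bound cheap, here only polynomial tails are available, so the exponents must be tracked carefully to guarantee that $R_n(p,q,\theta)$ is finite for the admissible model collections — this is exactly the point where the condition $2(q+1)<p$ enters, leaving enough room between $q$ and $p/2$ for the series $\sum_{m'}(L_{m'}\Tr(A_{m'})/\rho(P_n)^2)^{q-p/2}$ to converge under the weight assumptions. A secondary technical nuisance is the appearance of $\rho(\pi_{m'}P_n)$ versus $\rho(P_n)$ in the two regimes $\Tr(A)\gtrless\rho(A)$ of the quadratic-form bound, which is what forces the factor $\bigl(1+\Tr(\tra{P_n}\pi_{m'}P_n)/\rho^2(\pi_{m'}P_n)\bigr)$ in $R_n(p,q,\theta)$; handling the two cases separately and then recombining is routine but must be done with care to land on exactly the stated remainder.
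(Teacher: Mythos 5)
Your high-level plan is the paper's plan: derive the deterministic basic inequality from \eqref{prop_def_plse}, control the random fluctuations via Rosenthal-type bounds and Markov at an exponent between $q$ and $p/2$, union-bound over $\M$, and convert the tail bound into a moment bound (the paper integrates the tail of a random variable $\mathcal{H}_+$; you propose working in $\Lp^q$ directly, which is equivalent). The choice of tools --- Corollary 5.1 of \cite{Bar00} for the quadratic forms $\Vert\pi_m P_n\varepsilon\Vert_n^2$, the remark that $2(q+1)<p$ creates the room between $q$ and $p/2$ needed for the integral in Lemma~\ref{lemInt} to converge, and the interpretation of $N_0$ --- all match the paper.

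However, your decomposition misidentifies the cross term and consequently omits one of the two families that must be controlled. You write that the term to be split by \eqref{inegCarres} is $2\sigma\langle P_n\varepsilon,\hat s_{m'}-s_{m'}\rangle_n$; but $\hat s_{m'}-s_{m'}=\sigma\pi_{m'}P_n\varepsilon$, so that quantity equals $2\sigma^2\Vert\pi_{m'}P_n\varepsilon\Vert_n^2$ --- it is already a pure quadratic form, there is no product of two factors to separate, and no $\alpha\Vert s-\hat s_{m'}\Vert_n^2$ arises from it. The term that actually calls for the $2xy\leqslant\alpha x^2+\alpha^{-1}y^2$ step (after Cauchy--Schwarz against the unit vector $u_{\hat m}$) is $2\sigma\langle s-s_{\hat m},P_n\varepsilon\rangle_n\leqslant 2\sigma\Vert s-s_{\hat m}\Vert_n\,\vert\langle u_{\hat m},P_n\varepsilon\rangle_n\vert$, and the quantity one is left to control uniformly over $m$ is the squared \emph{linear} form $\langle u_m,P_n\varepsilon\rangle_n^2$. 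This second family is nowhere in your proposal, yet it requires its own deviation bound (Markov at level $p$ plus Rosenthal applied to $\langle\tra{P_n}u_m,\varepsilon\rangle_n$, which uses $\Vert\tra{P_n}u_m\Vert_n\leqslant\rho(P_n)$). This also corrects your explanation of the factor $1+\Tr(\tra{P_n}\pi_mP_n)/\rho^2(\pi_mP_n)$ in $R_n(p,q,\theta)$: it does not come from "two regimes $\Tr(A)\gtrless\rho(A)$ of the quadratic-form bound" but from adding the two separate contributions --- the quadratic-form tail carries the factor $\Tr(\tilde A)/\rho(\tilde A)$ from \cite{Bar00}, while the linear-form tail carries a constant, and the sum gives the $1+\Tr/\rho^2$. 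Without introducing $u_m$ and the accompanying linear-form bound, the deterministic inequality cannot be closed on the left (you have no mechanism to absorb $\Vert s-s_{\hat m}\Vert_n^2$ into $\Vert s-\tilde s\Vert_n^2$) and $R_n(p,q,\theta)$ cannot acquire the stated form.
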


The proofs of these theorems give explicit values for the constants $C$ that appear in the upperbounds. In both cases, these constants go to infinity as $\theta$ tends to $0$ or increases toward infinity. In practice, it does neither seem reasonable to choose $\theta$ close to 0 nor very large. Thus this explosive behavior is not restrictive but we still have to choose a ``good'' $\theta$. The values for $\theta$ suggested by the proofs are around the unity but we make no claim of optimality. Indeed, this is a hard problem to determine an optimal choice for $\theta$ from theoretical computations since it could depend on all the parameters and on the choice of the collection of models. In order to calibrate it in practice, several solutions are conceivable. We can use a simulation study, deal with cross-validation or try to adapt the slope heuristics described in \cite{BirMas07} to our procedure.

For penalties of order of $\Tr(\tra{P_n}\pi_mP_n)\sigma^2/n$, Inequalities (\ref{G_pseudo_oracle}) and (\ref{NG_th1_res}) are not far from being oracle. Let us denote by $R_n$ the remainder term $R_n(\theta)$ or $R_n(p,q,\theta)$ according to whether {\bf ($\text{H}_{\text{Gau}}$)} or {\bf ($\text{H}_{\text{Mom}}$)} holds. To deduce oracle inequalities from that, we need some additional hypotheses as the following ones:

\vspace{\baselineskip}
\textbf{($\text{A}_1$)} there exists some universal constant $\zeta>0$ such that
$$\pen(m)\leqslant \zeta\frac{\Tr(\tra{P_n}\pi_mP_n)}{n}\sigma^2\text{, for all }m\in\M\ ,$$

\textbf{($\text{A}_2$)} there exists some constant $R>0$ such that
$$\sup_{n\geqslant 1}R_n\leqslant R\ ,$$

\textbf{($\text{A}_3$)} there exists some constant $\rho>1$ such that
$$\sup_{n\geqslant 1}\rho^2(P_n)\leqslant \rho^2\ .$$

Thus, under the hypotheses of Theorem \ref{G_main_theo} and these three assumptions, we deduce from (\ref{G_pseudo_oracle}) that
$$\E\left[\Vert s-\tilde{s}\Vert_n^2\right]\leqslant C\inf_{m\in\M}\left\{\Vert s-s_m\Vert_n^2+\frac{\Tr(\tra{P_n}\pi_mP_n)}{n}\sigma^2\right\}+\frac{R\rho^2\sigma^2}{n}$$
where $C$ is a constant that does not depend on $s$, $\sigma^2$ and $n$. By Proposition \ref{prop_risk}, this inequality corresponds to (\ref{a_oracle}) up to some additive term. To derive similar inequality from (\ref{NG_th1_res}), we need on top of that to assume that $p>4$ in order to be able to take $q=1$.

Assumption {\bf ($\text{A}_3$)} is subtle and strongly depends on the nature of $P_n$. The case of oblique projector that we use to estimate a component in an additive framework will be discussed in Section \ref{sect_estim_comp}. Let us replace it, for the moment, by the following one

\vspace{\baselineskip}
\textbf{($\text{A}_3'$)} there exists $c\in(0,1)$ that does not depend on $n$ such that
$$c\rho^2(P_n)\dim(S_m)\leqslant\Tr(\tra{P_n}\pi_mP_n)\ .$$

By the properties of the norm $\rho$, note that $\Tr(\tra{P_n}\pi_mP_n)$ always admits an upperbound with the same flavor
\begin{eqnarray*}
\Tr(\tra{P_n}\pi_mP_n) & = & \Tr(\pi_mP_n\tra{(\pi_mP_n)})\\
& \leqslant & \rho(\pi_mP_n\tra{(\pi_mP_n)})\rg(\pi_mP_n\tra{(\pi_mP_n)})\\
& \leqslant & \rho^2(\pi_mP_n)\rg(\pi_m)\\
& \leqslant & \rho^2(P_n)\dim(S_m)\ .
\end{eqnarray*}
In all our results, the quantity $\Tr(\tra{P_n}\pi_mP_n)$ stands for a dimensional term relative to $S_m$. Hypothesis {\bf ($\text{A}_3'$)} formalizes that by assuming that its order is the dimension of the model $S_m$ up to the norm of the covariance matrix $\tra{P_n}P_n$.

Let us now discuss about the assumptions {\bf ($\text{A}_1$)} and {\bf ($\text{A}_2$)}. They are connected and they raise the impact of the complexity of the collection $\mathcal{F}$ on the estimation procedure. Typically, condition {\bf ($\text{A}_2$)} will be fulfilled under {\bf ($\text{A}_1$)} when $\mathcal{F}$ is not too ``large'', that is, when the collection does not contain too many models with the same dimension. We illustrate this phenomenon by the two following corollaries.

\begin{crllr}\label{G_gen_coro1}
Assume that {\bf ($\text{H}_\text{Gau}$)} and {\bf ($\text{A}_3'$)} hold and consider some finite $A\geqslant0$ such that
\begin{equation}\label{a_expcondi}
\sup_{d\in\N:N_d>0}\frac{\log N_d}{d}\leqslant A\ .
\end{equation}
Let $L$, $\theta$ and $\omega$ be some positive numbers that satisfy
$$L\geqslant\frac{2(1+\theta)^3}{c\theta^2}(A+\omega)\ .$$
Then, the estimator $\tilde{s}$ obtained from (\ref{pen_crit}) with penalty function given by
$$\pen(m)=(1+\theta+L)\frac{\Tr(\tra{P_n}\pi_mP_n)}{n}\sigma^2$$
is such that
\begin{equation*}
\E\left[\Vert s-\tilde{s}\Vert_n^2\right]\leqslant C\inf_{m\in\M}\left\{\Vert s-s_m\Vert_n^2+(L\vee1)\frac{\Tr(\tra{P_n}\pi_mP_n)\vee (c\rho^2(P_n))}{n}\sigma^2\right\}
\end{equation*}
where $C>1$ only depends on $\theta$, $\omega$ and $c$.
\end{crllr}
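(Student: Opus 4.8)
The plan is to derive the statement directly from Theorem~\ref{G_main_theo}, used with the constant weights $L_m=L$ for every $m\in\M$, and then to swallow the remainder term of \eqref{G_pseudo_oracle} into the infimum by using {\bf ($\text{A}_3'$)} and \eqref{a_expcondi}. Throughout, write $T_m=\Tr(\tra{P_n}\pi_mP_n)$ for brevity.

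First I would observe that the proposed penalty $\pen(m)=(1+\theta+L)T_m\sigma^2/n$ fulfils \eqref{G_pen_theo_gen} with equality for the choice $L_m=L$, so Theorem~\ref{G_main_theo} applies and yields
$$\E\left[\Vert s-\tilde{s}\Vert_n^2\right]\leqslant C\inf_{m\in\M}\left\{\Vert s-s_m\Vert_n^2+(\theta+L)\frac{T_m}{n}\sigma^2\right\}+\frac{\rho^2(P_n)\sigma^2}{n}R_n(\theta)$$
with $R_n(\theta)=C'\sum_{m\in\M}\exp\bigl(-C''LT_m/\rho^2(P_n)\bigr)$, where $C=C(\theta)>1$ and $C'=C'(\theta)$, $C''=C''(\theta)$ are positive. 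For the leading term, using $\theta+L\leqslant(1+\theta)(L\vee1)$ together with $T_m\leqslant T_m\vee(c\rho^2(P_n))$ bounds it by $C(1+\theta)$ times the infimum occurring in the statement.

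Next I would estimate $R_n(\theta)$. By {\bf ($\text{A}_3'$)}, $T_m/\rho^2(P_n)\geqslant c\dim(S_m)$; grouping the models according to their common dimension and invoking \eqref{a_expcondi} (so that $N_0\leqslant1$ and $N_d\leqslant e^{Ad}$ for $d\geqslant1$),
$$R_n(\theta)\leqslant C'\sum_{d\geqslant0}N_d\,e^{-cC''Ld}\leqslant C'\sum_{d\geqslant0}e^{-(cC''L-A)d}\ .$$
The crucial remark is that, once one recalls the explicit value of $C''(\theta)$ produced by the proof of Theorem~\ref{G_main_theo}, the hypothesis $L\geqslant 2(1+\theta)^3(A+\omega)/(c\theta^2)$ is exactly calibrated so that $cC''L-A\geqslant\omega$; the geometric series then sums to at most $(1-e^{-\omega})^{-1}$, whence $R_n(\theta)\leqslant C'''$ for a suitable $C'''=C'''(\theta,\omega)$.

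Finally, to absorb the remainder I would note that, for every $m\in\M$, $\rho^2(P_n)\leqslant c^{-1}\bigl(T_m\vee(c\rho^2(P_n))\bigr)$ and $L\vee1\geqslant1$, so $\rho^2(P_n)\sigma^2R_n(\theta)/n$ is itself at most $(C'''/c)$ times the infimum in the statement. Adding the two bounds gives the announced inequality with $C=C(1+\theta)+C'''/c>1$, a constant depending only on $\theta$, $\omega$ and $c$. The one genuinely delicate step is the calibration of $L$: one has to carry the explicit $\theta$-dependence of $C''$ through Theorem~\ref{G_main_theo} to certify that the stated threshold turns $\sum_{m\in\M}\exp(-C''LT_m/\rho^2(P_n))$ into a bounded quantity depending only on $\omega$ and $\theta$. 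The rest is routine bookkeeping.
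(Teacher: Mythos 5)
Your proof is correct and follows essentially the same path as the paper's: apply Theorem~\ref{G_main_theo} with constant weights $L_m=L$, bound the geometric series using {\bf ($\text{A}_3'$)} and \eqref{a_expcondi} together with the calibration of $L$ against the explicit constant $C''(\theta)=1/c_\theta$ from the proof of the theorem (for which indeed $c_\theta\leqslant 2(1+\theta)^3/\theta^2$), and then absorb the remainder into the infimum via $\rho^2(P_n)\leqslant c^{-1}(T_m\vee c\rho^2(P_n))$. The only cosmetic difference is that you lower-bound the infimum directly to swallow the remainder, whereas the paper folds $\rho^2(P_n)(\dim(S_m)\vee1)$ inside the brace before taking the infimum, but these are the same estimate.
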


\noindent For errors that only satisfy moment condition, we have the following similar result.

\begin{crllr}\label{NG_gen_coro1}
Assume that {\bf ($\text{H}_\text{Mom}$)} and {\bf ($\text{A}_3'$)} hold with $p>6$ and let $A>0$ and $\omega>0$ such that
\begin{equation}\label{a_polcondi}
N_0\leqslant1\hspace{0.5cm}\text{and}\hspace{0.5cm}\sup_{d>0:N_d>0}\frac{N_d}{(1+d)^{p/2-3-\omega}}\leqslant A\ .
\end{equation}
Consider some positive numbers $L$, $\theta$ and $\omega'$ that satisfy
$$L\geqslant\omega'A^{2/(p-2)}\ ,$$
then, the estimator $\tilde{s}$ obtained from (\ref{pen_crit}) with penalty function given by
$$\pen(m)=(1+\theta+L)\frac{\Tr(\tra{P_n}\pi_mP_n)}{n}\sigma^2$$
is such that
\begin{equation*}
\E\left[\Vert s-\tilde{s}\Vert_n^2\right]\leqslant C\tau_p\inf_{m\in\M}\left\{\Vert s-s_m\Vert_n^2+(L\vee1)\frac{\Tr(\tra{P_n}\pi_mP_n)\vee (c\rho^2(P_n))}{n}\sigma^2\right\}
\end{equation*}
where $C>1$ only depends on $\theta$, $p$, $\omega$, $\omega'$ and $c$.
\end{crllr}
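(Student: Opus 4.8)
The plan is to derive the oracle inequality as a direct consequence of Theorem \ref{NG_th1} applied with $q=1$ (which is legitimate since $p>6>4=2(q+1)$) and with the constant weights $L_m=L$ for all $m\in\M$. First I would check that the proposed penalty satisfies the hypothesis \eqref{NG_th1_pen}: this is immediate since $\pen(m)=(1+\theta+L)\Tr(\tra{P_n}\pi_mP_n)\sigma^2/n$ with $L_m=L$. Theorem \ref{NG_th1} then yields
\begin{equation*}
\E\left[\Vert s-\tilde{s}\Vert_n^2\right]\leqslant C\inf_{m\in\M}\left\{\Vert s-s_m\Vert_n^2+\pen(m)\right\}+\frac{\rho^2(P_n)\sigma^2}{n}R_n(p,1,\theta)\ ,
\end{equation*}
so the whole task reduces to bounding the remainder term $R_n(p,1,\theta)$ by a constant multiple of $\tau_p$ times an expression of the same order as the quantity appearing inside the infimum, and then absorbing it there.

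The main work is the control of
$$R_n(p,1,\theta)=C'\tau_p\left[N_0+\sum_{m\in\M:S_m\neq\{0\}}\left(1+\frac{\Tr(\tra{P_n}\pi_mP_n)}{\rho^2(\pi_mP_n)}\right)\left(\frac{L\,\Tr(\tra{P_n}\pi_mP_n)}{\rho^2(P_n)}\right)^{1-p/2}\right]\ .$$
Here I would use {\bf ($\text{A}_3'$)}, which gives $\Tr(\tra{P_n}\pi_mP_n)\geqslant c\rho^2(P_n)d$ where $d=\dim(S_m)$, together with the upper bound $\Tr(\tra{P_n}\pi_mP_n)\leqslant\rho^2(P_n)d$ established in the excerpt (and $\rho^2(\pi_mP_n)\leqslant\rho^2(P_n)$). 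The factor $1+\Tr(\tra{P_n}\pi_mP_n)/\rho^2(\pi_mP_n)$ is then $\leqslant 1+d\leqslant 2d$ for $d\geqslant 1$, while $(L\,\Tr(\tra{P_n}\pi_mP_n)/\rho^2(P_n))^{1-p/2}\leqslant (cLd)^{1-p/2}$ since $1-p/2<0$. Grouping the sum by dimension and using $N_0\leqslant 1$ and the polynomial-growth bound $N_d\leqslant A(1+d)^{p/2-3-\omega}$ from \eqref{a_polcondi}, I get
$$R_n(p,1,\theta)\leqslant C'\tau_p\left[1+2(cL)^{1-p/2}A\sum_{d\geqslant1}d(1+d)^{p/2-3-\omega}d^{1-p/2}\right]\ ,$$
and the exponent of $d$ in the series is $1+(p/2-3-\omega)+(1-p/2)=-1-\omega<0$ (after bounding $(1+d)^{p/2-3-\omega}$ by a constant times $d^{p/2-3-\omega}$ when $p/2-3-\omega\geqslant0$, or trivially otherwise), so the series converges to a constant depending only on $\omega$ and $p$. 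The condition $L\geqslant\omega'A^{2/(p-2)}$ is exactly what makes $(cL)^{1-p/2}A=c^{1-p/2}(L^{(p-2)/2})^{-1}A\leqslant c^{1-p/2}(\omega')^{-(p-2)/2}$ bounded independently of $A$ (note $(1-p/2)\cdot\frac{2}{p-2}=-1$), which is the key point ensuring the remainder does not blow up with the size of the collection. Hence $R_n(p,1,\theta)\leqslant C''\tau_p$ with $C''$ depending only on $p,\omega,\omega',c$.

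It remains to fold the remainder into the infimum. I would write $\rho^2(P_n)\sigma^2/n$ as a bounded multiple of $(c\rho^2(P_n))\sigma^2/n$, which is in turn bounded by $(L\vee1)(\Tr(\tra{P_n}\pi_mP_n)\vee(c\rho^2(P_n)))\sigma^2/n$ for \emph{every} $m$, and likewise bound $\pen(m)=(1+\theta+L)\Tr(\tra{P_n}\pi_mP_n)\sigma^2/n$ by a constant (depending on $\theta$) times $(L\vee1)(\Tr(\tra{P_n}\pi_mP_n)\vee(c\rho^2(P_n)))\sigma^2/n$. Combining these with the displayed bound from Theorem \ref{NG_th1} gives the announced inequality with a final constant $C>1$ depending only on $\theta,p,\omega,\omega',c$, and the overall $\tau_p$ factored out (using $\tau_p\geqslant 1$, which follows from $\E[|\varepsilon_i|^p]\geqslant (\E[\varepsilon_i^2])^{p/2}=1$ by Jensen, so that the $C\inf\{\dots\}$ term may also be multiplied by $\tau_p$ without loss). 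The only slightly delicate bookkeeping is the dimension-by-dimension regrouping of the sum and verifying the exponent arithmetic $(1-p/2)\cdot\frac{2}{p-2}=-1$; everything else is routine. I expect the main obstacle to be purely organizational — keeping track of which constants depend on which parameters — rather than conceptual.
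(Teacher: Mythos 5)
Your plan matches the paper's proof essentially step for step: apply Theorem \ref{NG_th1} with $q=1$ (legitimate since $p>6$) and constant weights $L_m=L$, bound $R_n(p,1,\theta)$ by regrouping the sum by dimension using {\bf ($\text{A}_3'$)} and the polynomial control of $N_d$, exploit the exponent identity $(1-p/2)\cdot 2/(p-2)=-1$ to cancel the $A$-dependence through the hypothesis $L\geqslant\omega'A^{2/(p-2)}$, and fold the resulting bounded remainder into the infimum using $\tau_p\geqslant1$. One imprecision worth fixing: to justify $\Tr(\tra{P_n}\pi_mP_n)/\rho^2(\pi_mP_n)\leqslant\dim(S_m)$ you invoke $\Tr(\tra{P_n}\pi_mP_n)\leqslant\rho^2(P_n)\dim(S_m)$ together with $\rho^2(\pi_mP_n)\leqslant\rho^2(P_n)$, but the second inequality points the wrong way — it enlarges the denominator rather than shrinking it, so the two facts you cite do not combine to give the claim. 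What you actually need is the intermediate line $\Tr(\tra{P_n}\pi_mP_n)\leqslant\rho^2(\pi_mP_n)\,\rg(\pi_m)$ (trace bounded by spectral norm times rank), which appears two lines earlier in the very Section~2 display you are referencing; with that substitution the rest of your argument goes through exactly as you describe.
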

Note that the assumption {\bf ($\text{A}_3'$)} guarantees that $\Tr(\tra{P_n}\pi_mP_n)$ is not smaller than $c\rho^2(P_n)\dim(S_m)$ and, at least for the models with positive dimension, this implies $\Tr(\tra{P_n}\pi_mP_n)\geqslant c\rho^2(P_n)$. Consequently, up to the factor $L$, the upperbounds of $\E\left[\Vert s-\tilde{s}\Vert_n^2\right]$ given by Corollaries \ref{G_gen_coro1} and \ref{NG_gen_coro1} are of order of the minimal risk $\mathcal{R}(s,\mathcal{F})$. To deduce oracle inequalities for $\tilde{s}$ from that, {\bf ($\text{A}_1$)} needs to be fulfilled. In other terms, we need to be able to consider some $L$ independently from the size $n$ of the data. It will be the case if the same is true for the bounds $A$.

Let us assume that the collection $\mathcal{F}$ is small in the sense that, for any $d\in\N$, the number of models $N_d$ is bounded by some constant term that neither depends on $n$ nor $d$. Typically, collections of nested models satisfy that. In this case, we are free to take $L$ equal to some universal constant. So, {\bf ($\text{A}_1$)} is true for $\zeta=1+\theta+L$ and oracle inequalities can be deduced for $\tilde{s}$. Conversely, a large collection $\mathcal{F}$ is such that there are many models with the same dimension. We consider that this situation happens, for example, when the order of $A$ is $\log n$. In such a case, we need to choose $L$ of order $\log n$ too and the upperbounds on the risk of $\tilde{s}$ become oracle type inequalities up to some logarithmic factor. However, we know that in some situations, this factor can not be avoided as in the complete variable selection problem with Gaussian errors (see Chapter 4 of \cite{Mas07}).

As a consequence, the same model selection procedure allows us to deduce oracle type inequalities under \textbf{($\text{H}_\text{Gau})$} and \textbf{($\text{H}_\text{Mom}$)}. Nevertheless, the assumption on $N_d$ in Corollary \ref{NG_gen_coro1} is more restrictive than the one in Corollary \ref{G_gen_coro1}. Indeed, to obtain an oracle inequality in the Gaussian case, the quantity $N_d$ is limited by $e^{Ad}$ while the bound is only polynomial in $d$ under moment condition. Thus, the Gaussian assumption {\bf ($\text{H}_\text{Gau}$)} allows to obtain oracle inequalities for more general collections of models.

\section{Estimation when variance is unknown}\label{a_unknown_var}

In contrast with Section \ref{a_section_known}, the variance factor $\sigma^2$ is here assumed to be unknown in (\ref{a_my_frame}). Since the penalties given by Theorems \ref{G_main_theo} and \ref{NG_th1} depend on $\sigma^2$, the procedure introduced in the previous section does not remain available to the statisticians. Thus, we need to estimate $\sigma^2$ in order to replace it in the penalty functions. The results of this section give upperbounds for the $\Ltwo$-risk of the estimators $\tilde{s}$ constructed in such a way.

To estimate the variance factor, we use a residual least-squares estimator $\hat{\sigma}^2$ that we define as follows. Let $V$ be some linear subspace of $\text{Im}(P_n)$ such that
\begin{equation}\label{GUV_hyp_dim}
\Tr(\tra{P_n}\pi P_n)\leqslant\Tr(\tra{P_n}P_n)/2
\end{equation}
where $\pi$ is the orthogonal projection onto $V$. We define
\begin{equation}\label{GUV_def_estim}
\hat{\sigma}^2=\frac{n\Vert Y-\pi Y\Vert_n^2}{\Tr\left(\tra{P_n}(I_n-\pi)P_n\right)}\ .
\end{equation}
First, we assume that the errors are Gaussian. The following result holds.

\begin{thrm}\label{GUV_main_theo}
Assume that {\bf ($\text{H}_\text{Gau}$)} holds. For any $\theta>0$, we define the penalty function
\begin{equation}\label{GUV_pen_def}
\forall m\in\M,\ \pen(m)=(1+\theta)\frac{\Tr(\tra{P_n}\pi_mP_n)}{n}\hat{\sigma}^2\ .
\end{equation}
Then, for some positive constants $C$, $C'$ and $C''$ that only depend on $\theta$, the penalized least-squares estimator $\tilde{s}$ satisfies
\begin{equation}\label{GUV_main_theo_ineg}
\E\left[\Vert s-\tilde{s}\Vert_n^2\right] \leqslant C\left(\inf_{m\in\M}\E\left[\Vert s-\hat{s}_m\Vert_n^2\right]+\Vert s-\pi s\Vert_n^2\right)+\frac{\rho^2(P_n)\sigma^2}{n}\bar{R}_n(\theta)
\end{equation}
where we have set
\begin{equation*}
\bar{R}_n(\theta)=C'\left[\left(2+\frac{\Vert s\Vert_n^2}{\rho^2(P_n)\sigma^2}\right)\exp\left(-\frac{\theta^2\Tr(\tra{P_n}P_n)}{32\rho^2(P_n)}\right)+\sum_{m\in\M}\exp\left(-C''\frac{\Tr(\tra{P_n}\pi_mP_n)}{\rho^2(P_n)}\right)\right]\ .
\end{equation*}
\end{thrm}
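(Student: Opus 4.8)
The plan is to reduce the statement to Theorem~\ref{G_main_theo} by controlling the data-driven estimator $\hat\sigma^2$. The idea is to introduce a favourable event $\mathcal{E}$ on which $\hat\sigma^2$ is squeezed between two deterministic multiples of $\sigma^2$ (up to the approximation error $\|s-\pi s\|_n^2$), so that the penalty \eqref{GUV_pen_def} becomes comparable to an admissible penalty in the sense of \eqref{G_pen_theo_gen}; on $\mathcal{E}$ the argument of Theorem~\ref{G_main_theo} then applies, while on $\mathcal{E}^c$ a crude bound together with the (exponentially small) probability of $\mathcal{E}^c$ suffices.

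\emph{Step 1: controlling $\hat\sigma^2$.} Set $b=s-\pi s$ and $W=(I_n-\pi)P_n\varepsilon$, so that $\hat\sigma^2=\|b+\sigma W\|^2/D$ with $D=\Tr(\tra{P_n}(I_n-\pi)P_n)$. Here $\|b\|^2=n\|s-\pi s\|_n^2$, while \eqref{GUV_hyp_dim} gives $D\geqslant\Tr(\tra{P_n}P_n)/2$, and $\E[\hat\sigma^2]=\sigma^2+\|b\|^2/D$. Under \textbf{($\text{H}_{\text{Gau}}$)}, $\|W\|^2$ is a Gaussian quadratic form with mean $D$ and covariance matrix of spectral norm at most $\rho^2(P_n)$, and $\langle b,W\rangle$ is a centred Gaussian of variance at most $\rho^2(P_n)\|b\|^2$. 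Applying the deviation inequalities for such quantities (the technical lemmas of the appendix, of Laurent--Massart type), I would build, for a suitable $\delta=\delta(\theta)\in(0,1)$, an event $\mathcal{E}$ such that
\begin{equation*}
\Pg(\mathcal{E}^c)\;\leqslant\;C_1\exp\!\left(-\frac{\theta^2\,\Tr(\tra{P_n}P_n)}{32\,\rho^2(P_n)}\right),
\end{equation*}
on which $\hat\sigma^2\geqslant(1-\delta)\sigma^2$ and $\hat\sigma^2\leqslant(1+\delta)\sigma^2+2\|b\|^2/D$. For the lower bound one writes $\|b+\sigma W\|^2\geqslant\sigma^2\|W'\|^2$, where $W'$ is the component of $W$ orthogonal to $b$, a Gaussian quadratic form concentrating around a quantity of order $D$, so that the bound is insensitive to the size of $b$; for the upper bound one uses $\|b+\sigma W\|^2\leqslant2\|b\|^2+2\sigma^2\|W\|^2$. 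The inequality $D\geqslant\Tr(\tra{P_n}P_n)/2$ turns the exponent into the one appearing in $\bar R_n(\theta)$ and, incidentally, isolates the regime $\Tr(\tra{P_n}P_n)\gg\rho^2(P_n)$ in which $\hat\sigma^2$ genuinely concentrates.

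\emph{Step 2: on the event $\mathcal{E}$.} Choosing $\delta$ so that $(1+\theta)(1-\delta)\geqslant1+\tfrac{3}{4}\theta$, one has on $\mathcal{E}$, for every $m\in\M$,
\begin{equation*}
\pen(m)\;\geqslant\;\Bigl(1+\tfrac{\theta}{2}+\tfrac{\theta}{4}\Bigr)\,\frac{\Tr(\tra{P_n}\pi_mP_n)}{n}\,\sigma^2 ,
\end{equation*}
that is, a penalty satisfying \eqref{G_pen_theo_gen} with $\theta$ replaced by $\theta/2$ and with the \emph{constant} weights $L_m\equiv\theta/4$. The proof of Theorem~\ref{G_main_theo} invokes the penalty only through such a lower bound on $\pen(\hat m)$ (to absorb, for the random index $\hat m$, the fluctuation of $\sigma^2\|\pi_{\hat m}P_n\varepsilon\|_n^2$ around its mean), the term $\pen(m)$ being simply carried along and the remaining ingredients being purely probabilistic; running the same chain of inequalities multiplied throughout by $\ind_{\mathcal{E}}$ therefore yields, for each fixed $m$,
\begin{align*}
\E\bigl[\|s-\tilde s\|_n^2\ind_{\mathcal{E}}\bigr]
&\leqslant C\Bigl(\|s-s_m\|_n^2+\E\bigl[\bigl(\pen(m)-\tfrac{\Tr(\tra{P_n}\pi_mP_n)}{n}\sigma^2\bigr)\ind_{\mathcal{E}}\bigr]\Bigr)\\
&\quad{}+\frac{\rho^2(P_n)\sigma^2}{n}\,C'\sum_{m'\in\M}\exp\!\left(-C''\,\frac{\Tr(\tra{P_n}\pi_{m'}P_n)}{\rho^2(P_n)}\right).
\end{align*}
On $\mathcal{E}$, the upper bound on $\hat\sigma^2$ together with $\Tr(\tra{P_n}\pi_mP_n)\leqslant\Tr(\tra{P_n}P_n)\leqslant2D$ gives $\pen(m)-\tfrac{\Tr(\tra{P_n}\pi_mP_n)}{n}\sigma^2\leqslant\tfrac{5\theta}{4}\tfrac{\Tr(\tra{P_n}\pi_mP_n)}{n}\sigma^2+4(1+\theta)\|s-\pi s\|_n^2$; taking the infimum over $m$ and invoking Proposition~\ref{prop_risk} bounds the first two terms by $C\bigl(\inf_{m\in\M}\E[\|s-\hat s_m\|_n^2]+\|s-\pi s\|_n^2\bigr)$ --- the main term of \eqref{GUV_main_theo_ineg} --- while the sum above is the second contribution to $\bar R_n(\theta)$.

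\emph{Step 3: on the event $\mathcal{E}^c$.} Since $s-s_{\hat m}\in S_{\hat m}^{\perp}$ and $\hat s_{\hat m}-s_{\hat m}\in S_{\hat m}$, the Pythagorean identity $\|s-\tilde s\|_n^2=\|s-s_{\hat m}\|_n^2+\sigma^2\|\pi_{\hat m}P_n\varepsilon\|_n^2\leqslant\|s\|_n^2+\sigma^2\rho^2(P_n)\|\varepsilon\|_n^2$ holds pathwise. As $\E[\|\varepsilon\|_n^4]\leqslant3$, the Cauchy--Schwarz inequality gives
\begin{equation*}
\E\bigl[\|s-\tilde s\|_n^2\ind_{\mathcal{E}^c}\bigr]\;\leqslant\;\|s\|_n^2\,\Pg(\mathcal{E}^c)+\sqrt{3}\,\sigma^2\rho^2(P_n)\sqrt{\Pg(\mathcal{E}^c)} ,
\end{equation*}
which by Step~1 is of order $\tfrac{\rho^2(P_n)\sigma^2}{n}\bigl(2+\|s\|_n^2/(\rho^2(P_n)\sigma^2)\bigr)\exp\!\bigl(-\theta^2\Tr(\tra{P_n}P_n)/(32\rho^2(P_n))\bigr)$, i.e. the first contribution to $\bar R_n(\theta)$. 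Adding the bounds of Steps~2 and~3 gives \eqref{GUV_main_theo_ineg}.

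\emph{Main difficulty.} The delicate point is Step~1: the two-sided concentration of $\hat\sigma^2$ with complement probability decaying like $\exp(-c\,\theta^2\Tr(\tra{P_n}P_n)/\rho^2(P_n))$. This requires peeling off the part of $W$ orthogonal to $b$ so that the lower deviation does not feel $\|b\|=\sqrt{n}\,\|s-\pi s\|_n$, controlling the cross term $\sigma\langle b,W\rangle$ for the upper deviation (this is ultimately what produces the $\|s\|_n^2/(\rho^2(P_n)\sigma^2)$ prefactor in $\bar R_n$), and tracking the numerical constants so as to obtain the stated threshold $\theta^2/32$. Once $\mathcal{E}$ is available, Steps~2 and~3 amount to re-running the deterministic-penalty argument of Theorem~\ref{G_main_theo} on $\mathcal{E}$ and an elementary moment computation on $\mathcal{E}^c$.
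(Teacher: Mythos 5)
Your proof is correct in outline and reaches the stated inequality, but it takes a somewhat heavier route than the paper. The paper works with a \emph{one-sided} event, $\Omega_n=\{\hat\sigma^2>(1-2\delta)\sigma^2\}$, which is all that is needed to ensure that the random penalty dominates an admissible penalty of the form \eqref{G_pen_theo_gen}. For the fixed index $m$ inside the infimum, the paper does not need any pathwise upper control of $\hat\sigma^2$: after running the chain of inequalities from Theorem~\ref{G_main_theo} restricted to $\Omega_n$, the term $\E[\pen(m)\ind_{\Omega_n}]$ is simply bounded by $\E[\pen(m)]$ and then by Lemma~\ref{G_UV_lem_bias}, which is the elementary identity $\E[\hat\sigma^2]=\sigma^2+n\|s-\pi s\|_n^2/\Tr(\tra{P_n}(I_n-\pi)P_n)$; together with \eqref{GUV_hyp_dim} this yields directly $\E[\pen(m)]\leqslant(1+\theta)\tfrac{\Tr(\tra{P_n}\pi_mP_n)}{n}\sigma^2+2(1+\theta)\|s-\pi s\|_n^2$. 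In contrast, you introduce a two-sided event $\mathcal{E}$ on which $\hat\sigma^2$ is also bounded from above, and you control $\pen(m)\ind_{\mathcal{E}}$ pathwise. That works, but it forces you to prove an upper deviation for $\hat\sigma^2$ that the paper never needs, and your claimed pathwise bound $\hat\sigma^2\leqslant(1+\delta)\sigma^2+2\|b\|^2/D$ does not follow from the crude split $\|b+\sigma W\|^2\leqslant 2\|b\|^2+2\sigma^2\|W\|^2$ (that gives a leading constant $2(1+\delta)$, not $1+\delta$); to get the tighter constant you would have to control the cross term $\sigma\langle b,W\rangle$ separately, which is more work for no gain since only $\theta$-dependent constants are required. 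You have also correctly identified the genuinely delicate point of the lower deviation --- one must project out the direction of $s-\pi s$ before applying the Gaussian quadratic-form concentration, exactly as in Lemma~\ref{G_UV_lem_conc} --- and your Step~3 matches the paper's treatment of the complementary event via Cauchy--Schwarz. So your argument is sound, but the one-sided event plus the expectation identity for $\hat\sigma^2$ is the more economical route and is what the paper actually does.
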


\noindent If the errors are only assumed to satisfy a moment condition, we have the following theorem.

\begin{thrm}\label{NG_UV_th}
Assume that {\bf ($\text{H}_\text{Mom}$)} holds. Let $\theta>0$, we consider the penalty function defined by
\begin{equation}\label{NG_UV_th_pen}
\forall m\in\M,\ \pen(m)=(1+\theta)\frac{\Tr(\tra{P_n}\pi_mP_n)}{n}\hat{\sigma}^2\ .
\end{equation}
For any $0<q\leqslant1$ such that $2(q+1)<p$, the penalized least-squares estimator $\tilde{s}$ satisfies
$$\E[\Vert s-\tilde{s}\Vert_n^{2q}]^{1/q}\leqslant C\left(\inf_{m\in\M}\E[\Vert s-\hat{s}_m\Vert_n^2]+2\Vert s-\pi s\Vert_n^2\right)+\rho^2(P_n)\sigma^2\bar{R}_n(p,q,\theta)$$
where $C=C(q,\theta)$ and $C'=C'(p,q,\theta)$ are positive constants, $\bar{R}_n(p,q,\theta)$ is equal to
$$\frac{R_n(p,q,\theta)^{1/q}}{n}+C'\tau_p^{1/q}\kappa_n\left(\frac{\Vert s\Vert_n^2}{\rho^2(P_n)\sigma^2}+\tau_p\right)\left(\frac{\rho^{2\alpha_p}(P_n)}{\Tr(\tra{P_n}P_n)^{\beta_p}}\right)^{1/q-2/p}$$
with $R_n(p,q,\theta)$ defined as in Theorem \ref{NG_th1}, $(\kappa_n)_{n\in\N}=(\kappa_n(p,q,\theta))_{n\in\N}$ is a sequence of positive numbers that tends to $\kappa=\kappa(p,q,\theta)>0$ as $\Tr(\tra{P_n}P_n)/\rho^2(P_n)$ increases toward infinity and
$$\alpha_p=(p/2-1)\vee1\text{ and }\beta_p=(p/2-1)\wedge1\ .$$
\end{thrm}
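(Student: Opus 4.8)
The plan is to reduce the unknown-variance case to the known-variance result (Theorem \ref{NG_th1}) by controlling the deviations of $\hat\sigma^2$ from $\sigma^2$, exactly as in the Gaussian Theorem \ref{GUV_main_theo} but with the concentration inequalities replaced by moment bounds. First I would record the decomposition of $\hat\sigma^2$. Writing $Y=s+\sigma P_n\varepsilon$ and using $\hat\sigma^2=n\Vert Y-\pi Y\Vert_n^2/\Tr(\tra{P_n}(I_n-\pi)P_n)$, one gets
\begin{equation*}
\Tr(\tra{P_n}(I_n-\pi)P_n)\,\hat\sigma^2 = n\Vert (I_n-\pi)s\Vert_n^2 + 2\sigma n\langle (I_n-\pi)s,(I_n-\pi)P_n\varepsilon\rangle_n + \sigma^2 n\Vert (I_n-\pi)P_n\varepsilon\Vert_n^2 .
\end{equation*}
The middle term is a linear form in $\varepsilon$ and the last term is a quadratic form; by hypothesis \eqref{GUV_hyp_dim} the normalizing trace $\Tr(\tra{P_n}(I_n-\pi)P_n)$ is at least $\Tr(\tra{P_n}P_n)/2$, so it is comparable to $\Tr(\tra{P_n}P_n)$. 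The goal of this first step is to produce, for a well-chosen event $\Omega_\sigma$, the two-sided bound $\sigma^2/2\le\hat\sigma^2\le$ (something like) $2\sigma^2+2\Vert(I_n-\pi)s\Vert_n^2/$trace, together with a bound on $\mathbb{P}(\Omega_\sigma^c)$ and on the moments of $\hat\sigma^2$ off that event.

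Second, on $\Omega_\sigma$ the penalty \eqref{NG_UV_th_pen} satisfies $\pen(m)\ge(1+\theta')\Tr(\tra{P_n}\pi_mP_n)\sigma^2/n$ for some smaller $\theta'>0$ (absorbing the bias contribution $\Vert(I_n-\pi)s\Vert_n^2$ into the $2\Vert s-\pi s\Vert_n^2$ term that appears in the conclusion), so one can invoke Theorem \ref{NG_th1} conditionally — more precisely, rerun its proof on $\Omega_\sigma$ — to get the leading term $C\inf_m\{\Vert s-s_m\Vert_n^2+\Tr(\tra{P_n}\pi_mP_n)\sigma^2/n\}$ plus $R_n(p,q,\theta)/n$ times $\rho^2(P_n)\sigma^2$; by Proposition \ref{prop_risk} the infimum is of the order of $\inf_m\E[\Vert s-\hat s_m\Vert_n^2]$. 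On $\Omega_\sigma^c$ I would bound $\Vert s-\tilde s\Vert_n^{2q}$ crudely: since $\tilde s=\hat s_{\hat m}=\pi_{\hat m}Y$, one has $\Vert s-\tilde s\Vert_n\le\Vert s\Vert_n+\rho(P_n)\sigma\Vert\varepsilon\Vert_n\cdot(\text{something})$ — actually the clean route is to use the definition \eqref{prop_def_plse} with $m$ any fixed model to dominate $\Vert\tilde s\Vert_n^2$ by $\Vert Y\Vert_n^2+\pen$-terms — and then apply Hölder's inequality with exponents $p/(2q)$ and its conjugate, so that $\E[\Vert s-\tilde s\Vert_n^{2q}\ind_{\Omega_\sigma^c}]\le\E[\Vert s-\tilde s\Vert_n^{p}]^{2q/p}\,\mathbb{P}(\Omega_\sigma^c)^{1-2q/p}$. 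This is exactly where the exotic factor $(\rho^{2\alpha_p}(P_n)/\Tr(\tra{P_n}P_n)^{\beta_p})^{1/q-2/p}$ and the $\kappa_n$ sequence come from: $\mathbb{P}(\Omega_\sigma^c)$ decays like a negative power of $\Tr(\tra{P_n}P_n)/\rho^2(P_n)$ dictated by the available moment $p$ of $\varepsilon$ (via Rosenthal-type bounds on the linear and quadratic forms above, giving power $p/2-1$ for the quadratic part and $p/2$ for the linear part, hence the $\alpha_p=(p/2-1)\vee1$, $\beta_p=(p/2-1)\wedge1$), while the $(\Vert s\Vert_n^2/(\rho^2(P_n)\sigma^2)+\tau_p)$ prefactor collects the crude $L^p$ bound on $\Vert s-\tilde s\Vert_n$.

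The main obstacle, I expect, is the bookkeeping on $\Omega_\sigma^c$: one must get a bound on $\E[\Vert s-\tilde s\Vert_n^{p}]$ (not just $2q$) that is uniform in the data-driven $\hat m$, which forces a supremum over $m\in\M$ of the moments of the $\pi_mP_n\varepsilon$ terms, and then one must check that after multiplying by $\mathbb{P}(\Omega_\sigma^c)^{1-2q/p}$ the result is genuinely of the advertised order $\rho^2(P_n)\sigma^2$ times a negative power of $\Tr(\tra{P_n}P_n)/\rho^2(P_n)$, with all $p,q,\theta$-dependence isolated into the constants and the $\kappa_n$ sequence — in particular verifying that $\kappa_n$ converges as $\Tr(\tra{P_n}P_n)/\rho^2(P_n)\to\infty$. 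The Rosenthal/Whittle moment inequalities for the linear and quadratic forms in $\varepsilon$ (supplied by the technical lemmas at the end of the paper) are the workhorses here; the rest is careful tracking of which power of the trace ratio survives. Everything else — the conditional application of Theorem \ref{NG_th1} and the translation via Proposition \ref{prop_risk} — is routine given the known-variance machinery.
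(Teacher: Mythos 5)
Your plan matches the paper's proof: split on an event $\Omega'_n=\{\hat\sigma^2>(1-3\delta)\sigma^2\}$ where the random penalty is minorized by a known-variance penalty, rerun the proof of Theorem~\ref{NG_th1} with constant weights on that event, apply H\"older with exponents $p/(2q)$ and its conjugate off the event, and feed in Lemma~\ref{NG_UV_lem_conc} for $\Pg({\Omega'_n}^c)$ and Lemma~\ref{G_UV_lem_bias} to turn the bias contribution of $\hat\sigma^2$ into the $2\Vert s-\pi s\Vert_n^2$ term.

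Two small corrections on the details you flag as potential obstacles. First, the crude bound off $\Omega'_n$ does not require a supremum over $m\in\M$: since $\tilde s=s_{\hat m}+\sigma\pi_{\hat m}P_n\varepsilon$ with $\pi_{\hat m}$ an orthogonal projection, one has directly $\Vert s-\tilde s\Vert_n^2=\Vert s-s_{\hat m}\Vert_n^2+\sigma^2\Vert\pi_{\hat m}P_n\varepsilon\Vert_n^2\leqslant\Vert s\Vert_n^2+\sigma^2\Vert P_n\varepsilon\Vert_n^2$, and then $\E[\Vert P_n\varepsilon\Vert_n^p]^{2q/p}\leqslant\rho^{2q}(P_n)\tau_p^{2q/p}$ by Jensen; no uniformity over the collection is needed. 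Second, the step you describe as ``routine'' is actually where the restriction $q\leqslant1$ enters and deserves a flag: the penalty inside the infimum is random (through $\hat\sigma^2$), so after fixing the deterministic minimizer $\bar m$ of $\Vert s-s_{m'}\Vert_n^2+\sigma^2\Tr(\tra{P_n}\pi_{m'}P_n)/n$ one is left with $\E[(\Vert s-s_{\bar m}\Vert_n^2+\Tr(\tra{P_n}\pi_{\bar m}P_n)\hat\sigma^2/n)^q]^{1/q}$, and it is precisely concavity of $x\mapsto x^q$ for $q\leqslant1$ (Jensen) that lets one pull out $\E[\hat\sigma^2]$ and apply Lemma~\ref{G_UV_lem_bias}; Theorem~\ref{NG_th1} itself allowed general $q$, so this is the reason the unknown-variance statement is restricted to $q\leqslant1$.
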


Penalties given by (\ref{GUV_pen_def}) and (\ref{NG_UV_th_pen}) are random and allow to construct estimators $\tilde{s}$ when $\sigma^2$ is unknown. This approach leads to theoretical upperbounds for the risk of $\tilde{s}$. Note that we use some generic model $V$ to construct $\hat{\sigma}^2$. This space is quite arbitrary and is pretty much limited to be an half-space of $\im(P_n)$. The idea is that taking $V$ as some ``large'' space can lead to a good approximation of the true $s$ and, thus, $Y-\pi Y$ is not far from being centered and its normalized norm is of order $\sigma^2$. However, in practice, it is known that the estimator $\hat{\sigma}^2$ inclined to overestimate the true value of $\sigma^2$ as illustrated by Lemmas \ref{G_UV_lem_conc} and \ref{NG_UV_lem_conc}. Consequently, the penalty function tends to be larger and the procedure overpenalizes models with high dimension. To offset this phenomenon, a practical solution could be to choose some smaller $\theta$ when $\sigma^2$ is unknown than when it is known as we discuss in Section \ref{a_section_simu1}.

\section{Application to additive models}\label{sect_estim_comp}

In this section, we focus on the framework (\ref{a_frame}) given by an additive model. To describe the procedure to estimate the component $s$, we assume that the variance factor $\sigma^2$ is known but it can be easily generalized to the unknown factor case by considering the results of Section \ref{a_unknown_var}. We recall that $s\in\Lp_0^2([0,1],\nu)$, $t^j\in\Lp_0^2([0,1],\nu_j),\ j=1,\dots,K$, and we observe
\begin{equation}\label{a_synt_frame}
Z_i=s_i+t_i+\sigma\varepsilon_i,\ i=1,\dots,n\ ,
\end{equation}
where the random vector $\varepsilon=(\varepsilon_1,\dots,\varepsilon_n)'$ is such that {\bf ($\text{H}_\text{Gau}$)} or {\bf ($\text{H}_\text{Mom}$)} holds and the vectors $s=(s_1,\dots,s_n)'$ and $t=(t_1,\dots,t_n)'$ are defined in (\ref{a_def_s_t}).

Let $\mathcal{S}_n$ be a linear subspace of $\Lp_0^2([0,1],\nu)$ and, for all $j\in\{1,\dots,K\}$, $\mathcal{S}^j_n$ be a linear subspace of $\Lp_0^2([0,1],\nu_j)$. We assume that these spaces have finite dimensions $D_n=\dim(\mathcal{S}_n)$ and $D_n^{(j)}=\dim(\mathcal{S}^j_n)$ such that
$$D_n+D^{(1)}_n+\dots+D^{(K)}_n< n\ .$$
We consider an orthonormal basis $\{\phi_1,\dots,\phi_{D_n}\}$ (resp. $\{\psi^{(j)}_1,\dots,\psi^{(j)}_{D^{(j)}_n}\}$) of $\mathcal{S}_n$ (resp. $\mathcal{S}^j_n$) equipped with the usual scalar product of $\Lp^2([0,1],\nu)$ (resp. of $\Lp^2([0,1],\nu_j)$). The linear spans $E,F^1,\dots,F^K\subset\R^n$ are defined by
$$E=\lspan\left\{(\phi_i(x_1),\dots,\phi_i(x_n))',\ i=1,\dots,D_n\right\}$$
and
$$F^j=\lspan\left\{(\psi^{(j)}_i(y^j_1),\dots,\psi^{(j)}_i(y^j_n))',\ i=1,\dots,D^{(j)}_n\right\},\ j=1,\dots,K\ .$$
Let $\textbf{1}_n=(1,\dots,1)'\in\R^n$, we also define
$$F=\R\textbf{1}_n+F^1+\dots+F^K$$
where $\R\textbf{1}_n$ is added to the $F^j$'s in order to take into account the constant part $\mu$ of (\ref{a_frame}). Furthermore, note that the sum defining the space $F$ does not need to be direct.

We are free to choose the functions $\phi_i$'s and $\psi^j_i$'s. In the sequel, we assume that these functions are chosen in such a way that the mild assumption $E\cap F=\{0\}$ is fulfilled. Note that we do not assume that $s$ belongs to $E$ neither that $t$ belongs to $F$. Let $G$ be the space $(E+F)^{\perp}$, we obviously have $E\oplus F\oplus G=\R^n$ and we denote by $P_n$ the projection onto $E$ along $F+G$. Moreover, we define $\pi_E$ and $\pi_{F+G}$ as the orthogonal projections onto $E$ and $F+G$ respectively. Thus, we derive the following framework from (\ref{a_synt_frame}),
\begin{equation}\label{a_frame_partial}
Y=P_nZ=\bar{s}+\sigma P_n\varepsilon
\end{equation}
where we have set
\begin{eqnarray*}
\bar{s} & = & P_ns+P_nt\\
& = & s+(P_n-I_n)s+P_nt\\
& = & s+(P_n-I_n)(s-\pi_Es)+P_n(t-\pi_{F+G}t)=s+h\ .
\end{eqnarray*}
Let $\mathcal{F}=\{S_m,\ m\in\M\}$ be a finite collection of linear subspaces of $E$, we apply the procedure described in Section \ref{a_section_known} to $Y$ given by (\ref{a_frame_partial}), that is, we choose an index $\hat{m}\in\M$ as a minimizer of (\ref{pen_crit}) with a penalty function satisfying the hypotheses of Theorems \ref{G_main_theo} or \ref{NG_th1} according to whether {\bf ($\text{H}_\text{Gau}$)} or {\bf ($\text{H}_\text{Mom}$)} holds. This way, we estimate $s$ by $\tilde{s}$. From the triangular inequality, we derive that
$$\E[\Vert s-\tilde{s}\Vert_n^2]\leqslant2\E[\Vert\bar{s}-\tilde{s}\Vert_n^2]+2\Vert h\Vert_n^2\ .$$
As we discussed previously, under suitable assumptions on the complexity of the collection $\mathcal{F}$, we can assume that {\bf ($\text{A}_1$)} and {\bf ($\text{A}_2$)} are fulfilled. Let us suppose for the moment that {\bf ($\text{A}_3$)} is satisfied for some $\rho>1$. Note that, for any $m\in\M$, $\pi_m$ is an orthogonal projection onto the image set of the oblique projection $P_n$. Consequently, we have $\Tr(\tra{P_n}\pi_mP_n)\geqslant\rg(\pi_m)=\dim(S_m)$ and Assumption {\bf ($\text{A}_3$)} implies {\bf ($\text{A}_3'$)} with $c=1/\rho^2$. Since, for all $m\in\M$,
$$\Vert \bar{s}-\pi_m\bar{s}\Vert_n\leqslant\Vert s-\pi_ms\Vert_n+\Vert h-\pi_mh\Vert_n\leqslant\Vert s-\pi_ms\Vert_n+\Vert h\Vert_n\ ,$$
we deduce from Theorems \ref{G_main_theo} or \ref{NG_th1} that we can find, independently from $s$ and $n$, two positive numbers $C$ and $C'$ such that
\begin{equation}\label{a_ineg01}
\E[\Vert s-\tilde{s}\Vert_n^2]\leqslant C\inf_{m\in\M}\left\{\Vert s-\pi_ms\Vert_n^2+\frac{\Tr(\tra{P_n}\pi_mP_n)}{n}\sigma^2\right\}+C'\left(\Vert h\Vert_n^2+\frac{\rho^2\sigma^2}{n}R\right)\ .
\end{equation}
To derive an interesting upperbound on the $\Ltwo$-risk of $\tilde{s}$, we need to control the remainder term. Because $\rho(\cdot)$ is a norm on $\Mn$, we dominate the norm of $h$ by
\begin{eqnarray*}
\Vert h\Vert_n & \leqslant & \rho(I_n-P_n)\Vert s-\pi_Es\Vert_n+\rho(P_n)\Vert t-\pi_{F+G}t\Vert_n\\
& \leqslant & (1+\rho(P_n))(\Vert s-\pi_Es\Vert_n+\Vert t-\pi_{F+G}t\Vert_n)\\
& \leqslant & (1+\rho)(\Vert s-\pi_Es\Vert_n+\Vert t-\pi_{F+G}t\Vert_n)\ .
\end{eqnarray*}
Note that, for any $m\in\M$, $S_m\subset E$ and so, $\Vert s-\pi_Es\Vert_n\leqslant\Vert s-\pi_ms\Vert_n$. Thus, Inequality (\ref{a_ineg01}) leads to
\begin{equation}\label{a_ineg02}
\E[\Vert s-\tilde{s}\Vert_n^2]\leqslant C(1+\rho)^2\inf_{m\in\M}\left\{\Vert s-\pi_ms\Vert_n^2+\frac{\Tr(\tra{P_n}\pi_mP_n)}{n}\sigma^2\right\}+C'(1+\rho)^2\left(\Vert t-\pi_{F+G}t\Vert_n^2+\frac{\sigma^2}{n}R\right)\ .
\end{equation}
The space $F+G$ has to be seen as a large approximation space. So, under a reasonable assumption on the regularity of the component $t$, the quantity $\Vert t-\pi_{F+G}t\Vert_n^2$ could be regarded as being neglectable. It mainly remains to understand the order of the multiplicative factor $(1+\rho)^2$.

\vspace{\baselineskip}

Thus, we now discuss about the norm $\rho(P_n)$ and the assumption {\bf ($\text{A}_3$)}. This quantity depends on the design points $(x_i, y^1_i,\dots,y^K_i)\in[0,1]^{K+1}$ and on how we construct the spaces $E$ and $F$, \textit{i.e.} on the choice of the basis functions $\phi_i$ and $\psi^{(j)}_i$. Hereafter, the design points $(x_i, y^1_i,\dots,y^K_i)$ will be assumed to be known independent realizations of a random variable on $[0,1]^{K+1}$ with distribution $\nu\otimes\nu_1\otimes\dots\otimes\nu_K$. We also assume that these points are independent of the noise $\varepsilon$ and we proceed conditionally to them. To discuss about the probability for {\bf ($\text{A}_3$)} to occur, we introduce some notations. We denote by $D'_n$ the integer
$$D'_n=1+D^{(1)}_n+\dots+D^{(K)}_n$$
and we have $\dim(F)\leqslant D'_n$. Let $A$ be a $p\times p$ real matrix, we define
$$r_p(A)=\sup\left\{\sum_{i=1}^p\sum_{j=1}^p\vert a_ia_j\vert\times\vert A_{ij}\vert\ : \sum_{i=1}^pa_i^2\leqslant1\right\}\ .$$
Moreover, we define the matrices $V(\phi)$ and $B(\phi)$ by
$$
\begin{array}{ccc}
\displaystyle{V_{ij}(\phi)=\sqrt{\int_0^1\phi_i(x)^2\phi_j(x)^2\nu(dx)}} & \text{and} & \displaystyle{B_{ij}(\phi)=\sup_{x\in[0,1]}\vert\phi_i(x)\phi_j(x)\vert\ ,}
\end{array}
$$
for any $1\leqslant i,j\leqslant D_n$. Finally, we introduce the quantities
$$
\begin{array}{ccc}
L_{\phi}=\max\left\{r_{D_n}^2(V(\phi)),\ r_{D_n}(B(\phi))\right\} & \text{and} & \displaystyle{b_{\phi}=\max_{i=1,\dots,D_n}\sup_{x\in[0,1]}\vert\phi_i(x)\vert}
\end{array}
$$
and
$$L_n=\max\left\{L_{\phi},\ D_nD_n',\ b_{\phi}\sqrt{nD_nD'_n}\right\}\ .$$
\begin{prpstn}\label{prop_rho}
Consider the matrix $P_n$ defined in (\ref{a_frame_partial}). We assume that the design points are independent realizations of a random variable on $[0,1]^{K+1}$ with distribution $\nu\otimes\nu_1\otimes\dots\otimes\nu_K$ such that we have $E\cap F=\{0\}$ and $\dim(E)=D_n$ almost surely. If the basis $\{\phi_1,\dots,\phi_{D_n}\}$ is such that
\begin{equation}\label{condi_phi}
\forall 1\leqslant i\leqslant D_n,\ \int_0^1\phi_i(x)\nu(dx)=0
\end{equation}
then, there exists some universal constant $C>0$ such that, for any $\rho>1$,
$$\Pg\left(\rho(P_n)>\rho\right)\leqslant 4D_n(D_n+D'_n)\exp\left(-\frac{Cn}{L_n}(1-\rho^{-1})^2\right)\ .$$
\end{prpstn}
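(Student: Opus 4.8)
The plan is to bound the spectral norm of the oblique projector $P_n$ by comparing it to the Gram structure of the bases generating $E$ and $F$. Recall that $P_n$ is the projection onto $E$ along $F+G$ where $G=(E+F)^\perp$. The key observation is that the spectral norm of an oblique projector onto $E$ along a complement $W$ is governed by the minimal principal angle between $E$ and $W$; equivalently, if we write $\rho(P_n)$ in terms of $\|\pi_E x\|_n$ and $\|\pi_{F+G}x\|_n$, controlling $\rho(P_n)$ amounts to showing that unit vectors of $E$ are uniformly bounded away (in the $\|\cdot\|_n$ geometry) from $F$. Since $G\perp E$, only the interaction between $E$ and $F$ matters, and condition \eqref{condi_phi} ensures that the "empirical" scalar products between the $\phi_i$-vectors and the constant vector $\mathbf{1}_n$ and the $\psi^{(j)}$-vectors concentrate around their population values, which vanish by the centering assumptions $s\in\Lp^2_0([0,1],\nu)$, etc. (the functions $\phi_i$ are orthogonal to constants under $\nu$, and the design is drawn from the product measure $\nu\otimes\nu_1\otimes\dots\otimes\nu_K$, so $\E[\phi_i(x_\ell)\psi^{(j)}_k(y^j_\ell)]=0$).

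\textbf{Main steps.} First I would reduce $\rho(P_n)>\rho$ to an event about empirical inner products. Writing any $x\in\R^n$ via its components in $E$ and $F+G$, one gets an algebraic identity expressing $\rho(P_n)$ through the operator that maps coordinates in the $\phi$-basis of $E$ to the orthogonal complement of $F$; a standard computation (as in perturbation bounds for oblique projections) shows $\rho(P_n)\le (1-\delta)^{-1}$ whenever the cross-Gram matrix between the normalized generators of $E$ and an orthonormal basis of $F$ has operator norm at most $\delta<1$. So it suffices to show that, with high probability, this cross-Gram operator norm is at most $1-\rho^{-1}$. Second, I would control the empirical Gram matrix of the $\phi$-vectors themselves: the matrix $\Phi$ with $\Phi_{ik}=n^{-1}\sum_\ell \phi_i(x_\ell)\phi_k(x_\ell)$ concentrates around the identity $D_n\times D_n$ matrix (the $\phi_i$ are $\nu$-orthonormal). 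Here $r_{D_n}(V(\phi))$ and $r_{D_n}(B(\phi))$ enter: by a Bernstein-type inequality applied entrywise and then summed via the $r_p$ functional, $\Pg(\rho(\Phi-I)>t)$ is bounded by something like $D_n^2\exp(-cnt^2/L_\phi)$ for $t$ small (the $V(\phi)$ term giving the variance factor, $B(\phi)$ the bound factor), so that on the complement the event $\dim(E)=D_n$ holds and $E$ is well-conditioned. Third, and in the same spirit, I would bound the cross-terms $n^{-1}\sum_\ell\phi_i(x_\ell)\mathbf{1}$ and $n^{-1}\sum_\ell\phi_i(x_\ell)\psi^{(j)}_k(y^j_\ell)$: each is a centered average (using \eqref{condi_phi} and independence of the $y^j$-coordinates from the $x$-coordinates under the product design), bounded by $b_\phi$ times the sup of the $\psi$'s, with variance controlled by $b_\phi^2$; Bernstein again gives exponential concentration, and summing the failure probabilities over the at most $D_n(1+D^{(1)}_n+\dots+D^{(K)}_n)=D_nD'_n$ pairs produces the $\exp(-Cn/L_n)$ rate with $L_n=\max\{L_\phi,D_nD_n',b_\phi\sqrt{nD_nD'_n}\}$ absorbing the variance, bound, and cardinality factors. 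Finally I would collect the pieces: on the intersection of these good events (a union bound over $O(D_n(D_n+D'_n))$ events), the cross-Gram operator norm is below $1-\rho^{-1}$ and $E$ is non-degenerate, hence $\rho(P_n)\le\rho$; taking complements gives the stated bound $4D_n(D_n+D'_n)\exp(-Cn(1-\rho^{-1})^2/L_n)$.

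\textbf{Main obstacle.} The delicate point is the first reduction: turning the geometric quantity $\rho(P_n)$ — the norm of an \emph{oblique} projector, which is sensitive to angles between subspaces and not merely to the conditioning of each basis separately — into a clean statement about the single cross-Gram operator norm, uniformly over all of $\R^n$ and with the right dependence $(1-\rho^{-1})^2$ in the exponent rather than a weaker power. One must be careful that the sum defining $F$ need not be direct and that $G$ contributes nothing, so the argument should be phrased purely in terms of $\pi_E$ and $\pi_F$ (or the possibly rank-deficient generating system of $F$). Once that lemma is in place, the probabilistic content is routine Bernstein/union-bound bookkeeping, with the only real care being to verify that $L_n$ as defined is exactly what is needed to dominate simultaneously the variance proxies, the almost-sure bounds, and the number of scalar products being controlled.
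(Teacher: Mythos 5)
Your overall architecture — reduce $\rho(P_n)$ to concentration of empirical Gram and cross-Gram quantities, control those entrywise by Bernstein, and union-bound over $O(D_n(D_n+D_n'))$ events — is indeed what the paper does. But you have correctly sensed where the difficulty lies and then not resolved it, and your sketch of the reduction is not quite right as stated, so there is a genuine gap.

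First, the reduction lemma. You assert that $\rho(P_n)\leqslant(1-\delta)^{-1}$ whenever the cross-Gram between normalized generators of $E$ and an orthonormal basis of $F$ has operator norm at most $\delta$. That statement, as written, makes no reference to the conditioning of the generating system $\{e^{(i)}\}$ of $E$, yet you then also control $\rho(\Phi-I_{D_n})$ in a separate step without explaining how the two bounds are combined to produce $\rho(P_n)\leqslant\rho$ on the good event. The paper avoids this ambiguity by a single linear-algebra device: it writes $P_n=CMC^{-1}$ in a basis built from the $e^{(i)}$'s, an orthonormal basis $\{f^{(j)}\}$ of $F$, and an orthonormal basis of $G$, uses $\rho^2(P_n)\leqslant\rho(\tra{C}C/n)\,\rho(n\,\tra{C^{-1}}C^{-1})$, and then invokes the elementary fact that $\rho(A-I_n)<1-\rho^{-1}$ implies $\rho(A^{-1})<\rho$. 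Since the block form of $\tra{C}C/n-I_n$ has $\Phi-I_{D_n}$ on the diagonal and the cross-Gram $\Omega$ off the diagonal, the single event $\{\rho(\tra{C}C/n-I_n)\leqslant1-\rho^{-1}\}$ encodes both the conditioning of $E$ and the near-orthogonality of $E$ and $F$, and $\rho(\tra{C}C/n-I_n)\leqslant\rho(\Phi-I_{D_n})+\rho(\Omega')$ splits it exactly into the two Bernstein problems. Without some such argument the factor $(1-\rho^{-1})^2$ in the exponent (and indeed the deterministic implication from ``small Gram perturbations'' to ``$\rho(P_n)\leqslant\rho$'') is not established; you have named this as your main obstacle and it is precisely the content of the proposition.

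Second, the cross-term control. Your step 1 lemma calls for the cross-Gram with an \emph{orthonormal} basis of $F$, but in step 3 you propose to bound the raw products $n^{-1}\sum_\ell\phi_i(x_\ell)\psi^{(j)}_k(y^j_\ell)$ and $n^{-1}\sum_\ell\phi_i(x_\ell)$. Since the vectors $(\psi^{(j)}_k(y^j_\cdot))$ together with $\mathbf{1}_n$ need not be $\Vert\cdot\Vert_n$-orthonormal and may even be linearly dependent, these bounds do not directly control the cross-Gram with an orthonormal basis of $F$; passing from one to the other would require controlling the Gram of the $\psi$-system as well, which you do not do. The paper sidesteps this by conditioning on the $y$-design (under which $\dim(F)=\delta_n$ and an $\Vert\cdot\Vert_n$-orthonormal basis $\{f^{(j)}\}$ of $F$ are fixed), observing that by \eqref{condi_phi} the variables $\langle e^{(i)},f^{(j)}\rangle_n$ are conditionally centered, and applying Bernstein to those inner products directly, with the union bound taken over the at most $D_nD_n'$ pairs $(i,j)$. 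You acknowledge that $F$'s generating system ``need not be direct'' but do not adjust the argument to accommodate this; as written, your steps 1 and 3 are not compatible.

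In short: the probabilistic bookkeeping is correct and matches the paper, but the deterministic reduction from $\rho(P_n)$ to Gram perturbations is asserted rather than proven, is imprecisely stated (missing the $\Phi$-conditioning), and is fed with the wrong cross-terms. Supplying the $P_n=CMC^{-1}$ decomposition, the submultiplicativity and inverse-perturbation bound, and switching to an orthonormal basis of $F$ conditionally on the $y$'s would close the gap.
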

As a consequence of Proposition \ref{prop_rho}, we see that {\bf ($\text{A}_3$)} is fulfilled with a large probability since we choose basis functions $\phi_i$ in such a way to keep $L_n$ small in front of $n$. It will be so for localized bases (piecewise polynomials, orthonormal wavelets, ...) with $L_n$ of order of $n^{1-\omega}$, for some $\omega\in(0,1)$, once we consider $D_n$ and $D_n'$ of order of $n^{\frac{1}{3}-\frac{3\omega}{2}}$ (this is a direct consequence of Lemma $1$ in \cite{BirMas98}). This limitation, mainly due to the generality of the proposition, could seem restrictive from a practical point of view. However the statistician can explicitly compute $\rho(P_n)$ with the data. Thus, it is possible to adjust $D_n$ and $D_n'$ in order to keep $\rho(P_n)$ small in practice. Moreover, we will see in Section \ref{a_section_simu1} that, for our choices of $\phi_i$ and $\psi^j_i$, we can easily consider $D_n$ and $D_n'$ of order of $\sqrt{n}$ as we keep $\rho(P_n)$ small (concrete values are given in the simulation study).

\section{Convergence rates}

The previous sections have introduced various upperbounds on the $\Ltwo$-risk of the penalized least-squares estimators $\tilde{s}$. Each of them is connected to the minimal risk of the estimators among a collection $\{\hat{s}_m,m\in\M\}$. One of the main advantages of such inequalities is that it allows us to derive uniform convergence rates with respect to many well known classes of smoothness (see \cite{BirMas97}). In this section, we give such results over H\"olderian balls for the estimation of a component in an additive framework. To this end, for any $\alpha>0$ and $R>0$, we introduce the space $\mathcal{H}_{\alpha}(R)$ of the $\alpha$-H\"olderian functions with constant $R>0$ on $[0,1]$,
$$\mathcal{H}_{\alpha}(R)=\left\{f:[0,1]\rightarrow\R\ :\ \forall x,y\in[0,1],\ \vert f(x)-f(y)\vert\leqslant R\vert x-y\vert^{\alpha}\right\}\ .$$

In order to derive such convergence rates, we need a collection of models $\mathcal{F}$ with good approximation properties for the functions of $\mathcal{H}_{\alpha}(R)$. We denote by $P_n^{BM}$ any oblique projector defined as in the previous section and based on spaces $\mathcal{S}_n$ and $\mathcal{S}_n^j$ that are constructed as one of the examples given in Section 2 of \cite{BirMas00}. In particular, such a construction allows us to deal with approximation spaces $\mathcal{S}_n$ and $\mathcal{S}_n^j$ that can be considered as spaces of piecewise polynomials, spaces of orthogonal wavelet expansions or spaces of dyadic splines on $[0,1]$. We consider the dimensions $D_n=\dim(\mathcal{S}_n)$ and, for any $j\in\{1,\dots,K\}$, $D_n^{(j)}=\dim(\mathcal{S}_n^j)=D_n/K$. Finally, we take a collection of models $\mathcal{F}^{BM}$ that contains subspaces of $E=\im(P_n^{BM})$ as Baraud did in Section 2.2 of \cite{Bar02}.

\begin{prpstn}\label{a_prop_rate}
Consider the framework (\ref{a_frame}) and assume that {\bf ($\text{H}_\text{Gau}$)} or {\bf ($\text{H}_\text{Mom}$)} holds with $p>6$. We define $Y$ in (\ref{a_frame_partial}) with $P_n^{BM}$. Let $\eta>0$ and $\tilde{s}$ be the estimator selected by the procedure (\ref{pen_crit}) applied to the collection of models $\mathcal{F}^{BM}$ with the penalty
$$pen(m)=(1+\eta)\frac{\Tr(\tra{P_n^{BM}}\pi_mP_n^{BM})}{n}\sigma^2\ .$$
Suppose that {\bf ($\text{A}_3$)} is fulfilled, we define
$$\zeta_n=\frac{1}{2}\left(\frac{\log n}{\log D_n}-1\right)>0\ .$$
For any $\alpha>\zeta_n$ and $R>0$, the penalized least-squares estimator $\tilde{s}$ satisfies
\begin{equation}\label{a_opt_rate}
\sup_{(s,t^1,\dots,t^K)\in\mathcal{H}_{\alpha}(R)^{K+1}}\E_{\varepsilon,d}\left[\Vert s-\tilde{s}\Vert_n^2\right]\leqslant C_{\alpha}n^{-2\alpha/(2\alpha+1)}
\end{equation}
where $\E_{\varepsilon,d}$ is the expectation on $\varepsilon$ and on the random design points and $C_{\alpha}>1$ only depends on $\alpha$, $\rho$, $\sigma^2$, $K$, $L$, $\theta$ and $p$ (under {\bf ($\text{H}_\text{Mom}$)} only).
\end{prpstn}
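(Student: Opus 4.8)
The plan is to combine the oracle-type inequality already available for $\tilde s$ in the additive framework---namely \eqref{a_ineg02}---with a classical approximation-theoretic bound for H\"olderian functions by the Birg\'e--Massart-type spaces $\mathcal{S}_n$ and $\mathcal{S}_n^j$, and then to optimize the dimension $D_n$. First I would invoke the discussion of Section \ref{sect_estim_comp}: since $P_n^{BM}$ is an oblique projector onto $E$, one has $\Tr(\tra{P_n^{BM}}\pi_mP_n^{BM})\geqslant\dim(S_m)$, and {\bf ($\text{A}_3$)} gives the reverse bound $\Tr(\tra{P_n^{BM}}\pi_mP_n^{BM})\leqslant\rho^2\dim(S_m)$; moreover {\bf ($\text{A}_3$)} implies {\bf ($\text{A}_3'$)} with $c=1/\rho^2$. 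The collection $\mathcal{F}^{BM}$ built as in Baraud's Section 2.2 of \cite{Bar02} is a polynomial collection, so {\bf ($\text{A}_1$)} and {\bf ($\text{A}_2$)} hold with constants that do not depend on $n$ (under {\bf ($\text{H}_\text{Mom}$)} this is where $p>6$ is used, via Corollary \ref{NG_gen_coro1}; under {\bf ($\text{H}_\text{Gau}$)} via Corollary \ref{G_gen_coro1}). Hence \eqref{a_ineg02} specializes to
\begin{equation*}
\E_{\varepsilon}\left[\Vert s-\tilde{s}\Vert_n^2\right]\leqslant C(\rho)\inf_{m\in\M}\left\{\Vert s-\pi_ms\Vert_n^2+\frac{\dim(S_m)}{n}\sigma^2\right\}+C'(\rho)\left(\Vert t-\pi_{F+G}t\Vert_n^2+\frac{\sigma^2}{n}\right)\ .
\end{equation*}

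Next I would bound the two bias-type terms. For a function $f\in\mathcal{H}_{\alpha}(R)$, the Birg\'e--Massart construction (Section 2 of \cite{BirMas00}) guarantees a subspace of $\mathcal{S}_n$ (or of $\mathcal{S}_n^j$) of dimension $d\leqslant D_n$ achieving $\Vert f-\text{proj}\,f\Vert_{\infty}\leqslant C(\alpha,R)\,d^{-\alpha}$; passing to the empirical norm $\Vert\cdot\Vert_n$ only loses a constant, so $\inf_{m}\Vert s-\pi_ms\Vert_n^2\leqslant C(\alpha,R)\,d^{-2\alpha}$ for a suitable model of dimension $d$, and similarly $\Vert t-\pi_{F+G}t\Vert_n^2\leqslant C(\alpha,R,K)\,D_n^{-2\alpha}$ using that $F+G\supset F\supset\R\mathbf{1}_n+F^1+\dots+F^K$ and $\dim(\mathcal{S}_n^j)=D_n/K$. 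Plugging $d$ into the infimum gives a bound of order $d^{-2\alpha}+d\,\sigma^2/n$; choosing $d\asymp n^{1/(2\alpha+1)}$ balances the two and yields the rate $n^{-2\alpha/(2\alpha+1)}$. One must check that such a $d$ is admissible, i.e. $d\leqslant D_n$ and $d$ is the dimension of some model in $\mathcal{F}^{BM}$: this is exactly what the condition $\alpha>\zeta_n$ encodes, since $\zeta_n=\tfrac12(\log n/\log D_n-1)$ is the threshold below which $n^{1/(2\alpha+1)}>D_n$. The term $\Vert t-\pi_{F+G}t\Vert_n^2\leqslant C D_n^{-2\alpha}$ is then of smaller order than $n^{-2\alpha/(2\alpha+1)}$ precisely because $D_n\geqslant n^{1/(2\alpha+1)}$ under $\alpha>\zeta_n$, and $\sigma^2/n$ is negligible as well. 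Taking expectation over the random design (on the event where {\bf ($\text{A}_3$)} holds, using Proposition \ref{prop_rho} to control the complement) and then the supremum over $(s,t^1,\dots,t^K)\in\mathcal{H}_{\alpha}(R)^{K+1}$ gives \eqref{a_opt_rate}.

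The main obstacle I anticipate is the interplay between the random design and the empirical norm in the approximation step: the Birg\'e--Massart bounds are stated for the $\Lp^2(\nu)$-norm (or sup-norm), whereas the risk in \eqref{a_opt_rate} is measured in $\Vert\cdot\Vert_n$, and $\pi_m$ is the $\Vert\cdot\Vert_n$-orthogonal projection, not the $\Lp^2(\nu)$ one. Controlling $\Vert s-\pi_ms\Vert_n$ uniformly over $\mathcal{H}_\alpha(R)$ therefore requires either passing through the sup-norm estimate (clean, since $\Vert f\Vert_n\leqslant\Vert f\Vert_\infty$ deterministically, which is why I would route the argument that way) or a uniform equivalence of $\Vert\cdot\Vert_n$ and $\Vert\cdot\Vert_{\Lp^2(\nu)}$ on $\mathcal{S}_n$ of the kind underlying {\bf ($\text{A}_3$)} and Proposition \ref{prop_rho}. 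A secondary technical point is bookkeeping the event $\{\rho(P_n^{BM})\leqslant\rho\}$: on its (exponentially small) complement one bounds $\Vert s-\tilde s\Vert_n^2$ crudely by $\Vert Y\Vert_n^2$-type quantities or by $\rho^2(P_n^{BM})$ moments, which must be shown not to spoil the rate---here again $p>6$ (hence enough moments) in the {\bf ($\text{H}_\text{Mom}$)} case, and the Gaussian concentration in the {\bf ($\text{H}_\text{Gau}$)} case, do the job.
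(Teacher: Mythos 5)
Your proposal has the same skeleton as the paper's proof: specialize the oracle-type inequality from Section~\ref{sect_estim_comp} to $\mathcal{F}^{BM}$ using {\bf ($\text{A}_3$)}, bound the two bias-type terms by Birg\'e--Massart approximation of order $d^{-2\alpha}$, and balance $d^{-2\alpha}$ against $d/n$ at $d\asymp n^{1/(2\alpha+1)}$, with $\alpha>\zeta_n$ guaranteeing such a model exists below $D_n$. Where you diverge is in how you pass from the approximation-theoretic bound (stated for $\Lp^2(\nu)$ or $\Vert\cdot\Vert_\infty$) to the empirical norm $\Vert\cdot\Vert_n$ on a random design. You route through the sup-norm, which is deterministic since $\Vert f\Vert_n\leqslant\Vert f\Vert_\infty$. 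The paper instead takes the expectation $\E_d$ over the design points \emph{first}, using the identity $\E_d\bigl[\tfrac{1}{n}\sum_i f(x_i)^2\bigr]=\int_0^1 f^2\,d\nu$ together with the fact that $\pi_m$ minimizes $\Vert s-\cdot\Vert_n$ over $S_m$, and then applies Theorem~1 of \cite{BirMas00} in $\Lp^2(\nu)$. The $\E_d$ route is the one the statement is phrased around ($\E_{\varepsilon,d}$), is slightly more general (it feeds directly into Besov-ball bounds, not just sup-norm ones), and avoids any need to check that the sup-norm approximant has the right form; your sup-norm route is fine for H\"olderian balls with $\alpha\leqslant1$, which is the relevant range here, and has the virtue of being pathwise. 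Either works. One superfluous worry in your sketch: you propose to ``bookkeep the event $\{\rho(P_n^{BM})\leqslant\rho\}$'' and bound the risk crudely on its complement. The proposition explicitly assumes {\bf ($\text{A}_3$)}, so this complement is empty by hypothesis; the paper accordingly makes no such split, and neither need you.
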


Note that the supremum is taken over H\"olderian balls for all the components of the regression function, \textit{i.e.} the regression function is itself supposed to belong to an H\"olderian space. As we mention in the introduction, Stone \cite{Sto85} has proved that the rate of convergence given by (\ref{a_opt_rate}) is optimal in the minimax sense.

\section{Simulation study}\label{a_section_simu1}

In this section, we study simulations based on the framework given by (\ref{a_frame}) with $K+1$ components $s,t^1,\dots,t^K$ and Gaussian errors. First, we introduce the spaces $\mathcal{S}_n$ and $\mathcal{S}_n^j,\ j\in\{1,\dots,K\}$, and the collections of models that we handle. Next, we illustrate the performances of the estimators in practice by several examples.

\subsection{Preliminaries}

To perform the simulation study, we consider two collections of models. In both cases, we deal with the same spaces $\mathcal{S}_n$ and $\mathcal{S}_n^j$ defined as follows. Let $\varphi$ be the Haar wavelet's mother function,
$$
\forall x\in\R,\ \varphi(x)=
\left\{
\begin{array}{cl}
1 & \text{if }0\leqslant x<1/2\ ,\\
-1 & \text{if }1/2\leqslant x<1\ ,\\
0 & \text{otherwise.}
\end{array}
\right.
$$
For any $i\in\N$ and $j\in\{0,\dots,2^i-1\}$, we introduce the functions
$$\varphi_{i,j}(x)=2^{i/2}\varphi(2^ix-j),\ x\in\R\ .$$
It is clear that these functions are orthonormal in $\Ltwo_0([0,1],dx)$ for the usual scalar product. Let $d_n$ be some positive integer, we consider the space $\mathcal{S}_n\subset\Ltwo_0([0,1],dx)$ generated by the functions $\varphi_{i,j}$ such that $0\leqslant i\leqslant d_n$ and $0\leqslant j<2^i$. The dimension of this space is $\dim(\mathcal{S}_n)=D_n=2^{d_n+1}-1$. In the sequel, we denote by $\Pi_n$ the set of all the allowed pairs $(i,j)$,
$$\Pi_n=\left\{(i,j)\in\N^2\text{ such that }0\leqslant i\leqslant d_n,\ 0\leqslant j<2^i\right\}\ .$$
Moreover, for any $k\in\{1,\dots,D_n\}$ such that $k=2^i+j$ with $(i,j)\in\Pi_n$, we denote $\phi_k=\varphi_{i,j}$.

Let $d_n'$ be an other positive integer, the spaces $\mathcal{S}_n^j\subset\Ltwo_0([0,1],dy^j)$ are all supposed to be generated by the functions defined on $[0,1]$ by
$$\begin{array}{lcr}
\psi_{2i}(y)=\psi_{2i}^{(j)}(y)=\sin(i\pi y) & \text{ and } & \psi_{2i-1}(y)=\psi_{2i-1}^{(j)}(y)=\cos(i\pi y)
\end{array}$$
for any $i\in\{1,\dots,d_n'\}$ and $j\in\{1,\dots,K\}$. Thus, we have $\dim(\mathcal{S}_n^j)=D_n^{(j)}=2d_n'$ and $D_n'=2Kd_n'+1$.

As previously, we define $P_n$ as the oblique projector onto $E$ along $F+(E+F)^{\perp}$. The image set $E=\im(P_n)$ is generated by the vectors
$$\varphi_{i,j}=(\varphi_{i,j}(x_1),\dots,\varphi_{i,j}(x_n))'\in\R^n,\ (i,j)\in\Pi_n\ .$$
Let $m$ be a subset of $\Pi_n$, the model $S_m$ is defined as the linear subspace of $E$ generated by the vectors $\varphi_{i,j}$ with $(i,j)\in m$.

In the following simulations, we always take $D_n$ and $D_n'$ close to $4\sqrt{n}$, \textit{i.e.}
$$\begin{array}{lcr}
\displaystyle{d_n=\left\lfloor\frac{\log(2\sqrt{n}+1/2)}{\log(2)}\right\rfloor} & \text{and} & \displaystyle{d_n'=\left\lfloor\frac{4\sqrt{n}-1}{2K}\right\rfloor}
\end{array}$$
where, for any $x\in\R$, $\lfloor x\rfloor$ denotes the largest integer not greater than $x$. For such choices, basic computations lead to $L_n$ of order of $n^{5/4}$ in Proposition \ref{prop_rho}. As a consequence, this proposition does not ensure that {\bf ($\text{A}_3$)} is fulfilled with a large probability. However, $\rho(P_n)$ remains small in practice as we will see and it allows us to deal with larger collections of models.

\subsection{Collections of models}

The first collection of models is the smaller one because the models are nested. Let us introduce the index subsets, for any $i\in\{0,\dots,d_n\}$,
$$m_i=\left\{(i,j),\ 0\leqslant j<2^i\right\}\subset\Pi_n\ .$$
Thus, we define $\mathcal{F}^N$ as
$$\mathcal{F}^N=\left\{S_m\text{ such that }\exists k\in\{0,\dots,d_n\},\ m=\bigcup_{i=0}^km_i\right\}\ .$$
This collection has a small complexity since, for any $d\in\N$, $N_d\leqslant 1$. According to Corollary \ref{G_gen_coro1}, we can consider the penalty function given by
\begin{equation}\label{Npen1}
\pen_N(m)=(1+C)\frac{\Tr(\tra{P_n}\pi_mP_n)}{n}\sigma^2
\end{equation}
for some $C>0$. In order to compute the selected estimator $\tilde{s}$, we simply compute $\hat{s}_m$ in each model of $\mathcal{F}^N$ and we take the one that minimizes the penalized least-squares criterion.

The second collection of models is larger than $\mathcal{F}^N$. Indeed, we allow $m$ to be any subset of $\Pi_n$ and we introduce
$$\mathcal{F}^C=\left\{S_m\text{ such that }m\subset\Pi_n\right\}\ .$$
The complexity of this collection is large because, for any $d\in\N$,
$$N_d=\binom{D_n}{d}=\frac{D_n!}{d!(D_n-d)!}\leqslant\left(\frac{eD_n}{d}\right)^d\ .$$
So, we have $\log N_d\leqslant d(1+\log D_n)$ and, according to Corollary \ref{G_gen_coro1}, we take a penalty function as
\begin{equation}\label{Cpen1}
\pen_C(m)=(1+C+\log D_n)\frac{\Tr(\tra{P_n}\pi_mP_n)}{n}\sigma^2
\end{equation}
for some $C>0$. The large number of models in $\mathcal{F}^C$ leads to difficulties for computing the estimator $\tilde{s}$. Instead of exploring all the models among $\mathcal{F}^C$, we break the penalized criterion down with respect to an orthonormal basis $\phi_1,\dots,\phi_{D_n}$ of $E$ and we get
\begin{eqnarray*}
& & \left\Vert Y-\sum_{i=1}^{D_n}\langle Y,\phi_i\rangle_n\phi_i\right\Vert_n^2+(1+C+\log D_n)\frac{\Tr(\tra{P_n}\pi_EP_n)}{n}\sigma^2\\
& = & \Vert Y\Vert_n^2-\sum_{i=1}^{D_n}\left[\langle Y,\phi_i\rangle_n^2-(1+C+\log D_n)\Vert\tra{P_n}\phi_i\Vert_n^2\sigma^2\right]\ .
\end{eqnarray*}
In order to minimize the penalized least-squares criterion, we only need to keep the coefficients $\langle Y,\phi_i\rangle_n$ that are such that
$$\langle Y,\phi_i\rangle_n^2\geqslant(1+C+\log D_n)\Vert\tra{P_n}\phi_i\Vert_n^2\sigma^2\ .$$
This threshold procedure allows us to compute the estimator $\tilde{s}$ in reasonable time.

In accordance with the results of Section \ref{a_unknown_var}, in the case of unknown variance, we substitute $\hat{\sigma}^2$ for $\sigma^2$ in the penalties (\ref{Npen1}) and (\ref{Cpen1}).

\subsection{Numerical simulations}

We now illustrate our results and the performances of our estimation procedure by applying it to simulated data
$$Z_i=s(x_i)+\sum_{j=1}^Kt^j(y^j_i)+\sigma\varepsilon_i,\ i=1,\dots,n\ ,$$
where $K\geqslant 1$ is an integer that will vary from an experiment to an other, the design points $(x_i,y^1_i,\dots,y^K_i)'$ are known independent realizations of an uniform random variable on $[0,1]^{K+1}$ and the errors $\varepsilon_i$ are i.i.d. standard Gaussian random variables. We handle this framework with known or unknown variance factor $\sigma^2=1$ according to the cases and we consider a design of size $n=512$. The unknown components $s,t^1,\dots,t^K$ are either chosen among the following ones, or set to zero in the last subsection,
$$\begin{array}{lll}
\displaystyle{f_1(x)=\sin\left(4\pi\left(x\wedge\frac{1}{2}\right)\right)} & \displaystyle{f_2(x)=\cos\left(2\pi\left(x-\frac{1}{4}\right)^2\right)}-C_2 & f_3(x)=x+2\exp(-16x^2)-C_3\\
~ & ~ & ~\\
f_4(x)=\sin(2x)+2\exp(-16x^2)-C_4 & \displaystyle{f_5(x)=\frac{1-\exp(-10(x-1/2))}{1+\exp(-10(x-1/2))}} & f_6(x)=6x(1-x)-1
\end{array}$$
where the constants $C_2$, $C_3$ and $C_4$ are such that $f_i\in\Ltwo_0([0,1],dx)$ for any $i\in\{1,\dots,6\}$.

The first step of the procedure consists in computing the oblique projector $P_n$ and taking the data $Y=P_nZ$. Figure \ref{fig:simu01} gives an example by representing the signal $s$, the data $Z$ and the projected data $Y$ for $K=6$, $s=f_1$ and $t^j=f_j$, $j\in\{1,\dots,6\}$. In particular, for this example, we have $\rho^2(P_n)=1.22$. We see that we actually get reasonable value of $\rho^2(P_n)$ with our particular choices for $D_n$ and $D_n'$.

\begin{figure}[htb]
\centering
\includegraphics[width=0.5\textwidth]{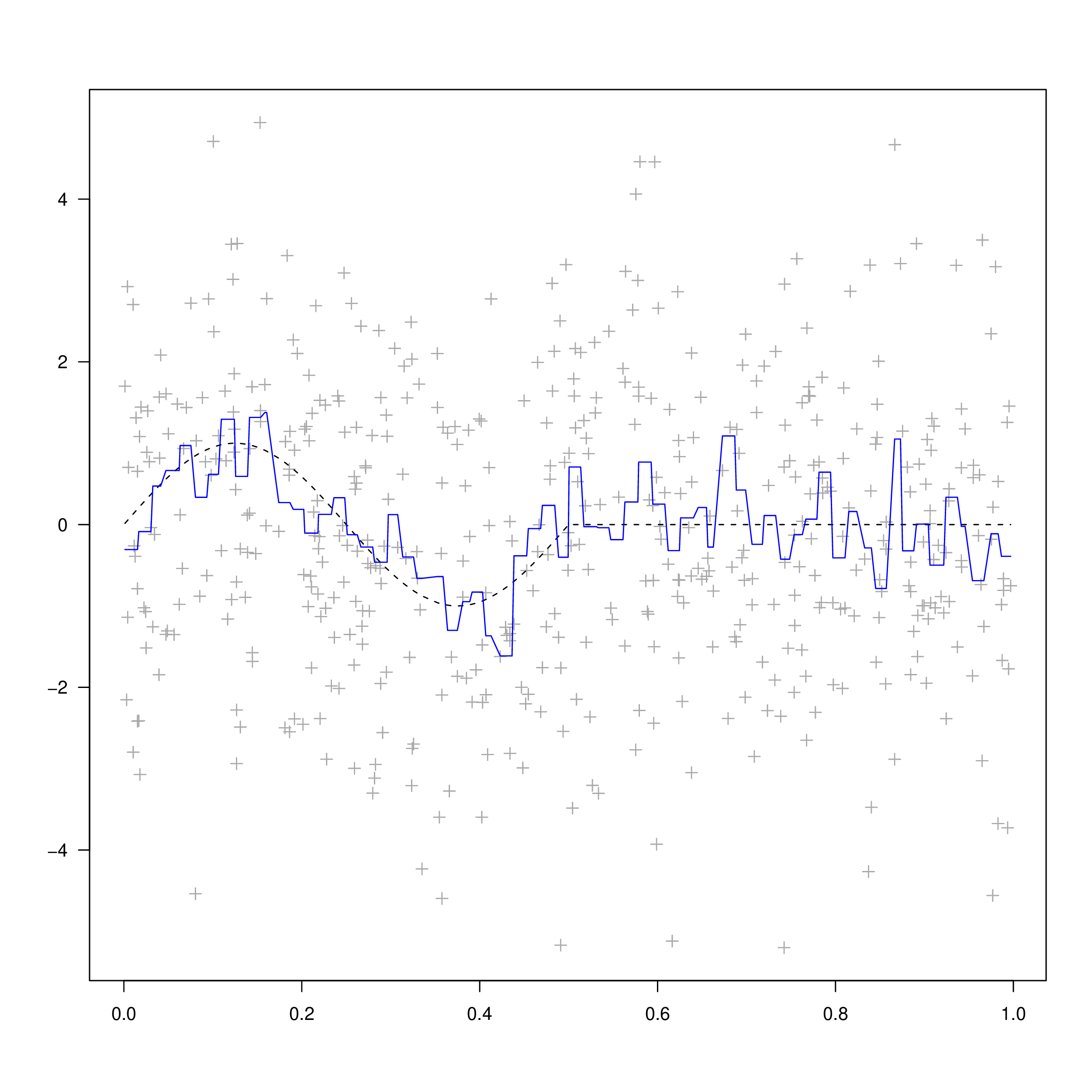}
\caption{Plot in $(x,z)$ of the signal $s$ (dashed line), the data $Z$ (dots) and the projected data $Y$ (plain line).}
\label{fig:simu01}
\end{figure}

In order to estimate the component $s$, we choose $\hat{m}$ by the procedure \eqref{pen_crit} with penalty function given by \eqref{Npen1} or \eqref{Cpen1} according to the cases. The first simulations deal with the collection $\mathcal{F}^N$ of nested models. Figure \ref{fig:simu02_03} represents the true $s$ and the estimator $\tilde{s}$ for $K=6$ parasitic components given by $t^j=f_j,\ j\in\{1,\dots,6\}$ and $s=f_1$ or $s=f_5$. The penalty function \eqref{Npen1} has been used with a constant $C=1.5$.

\begin{figure}[htb]
\includegraphics[width=0.4\textwidth]{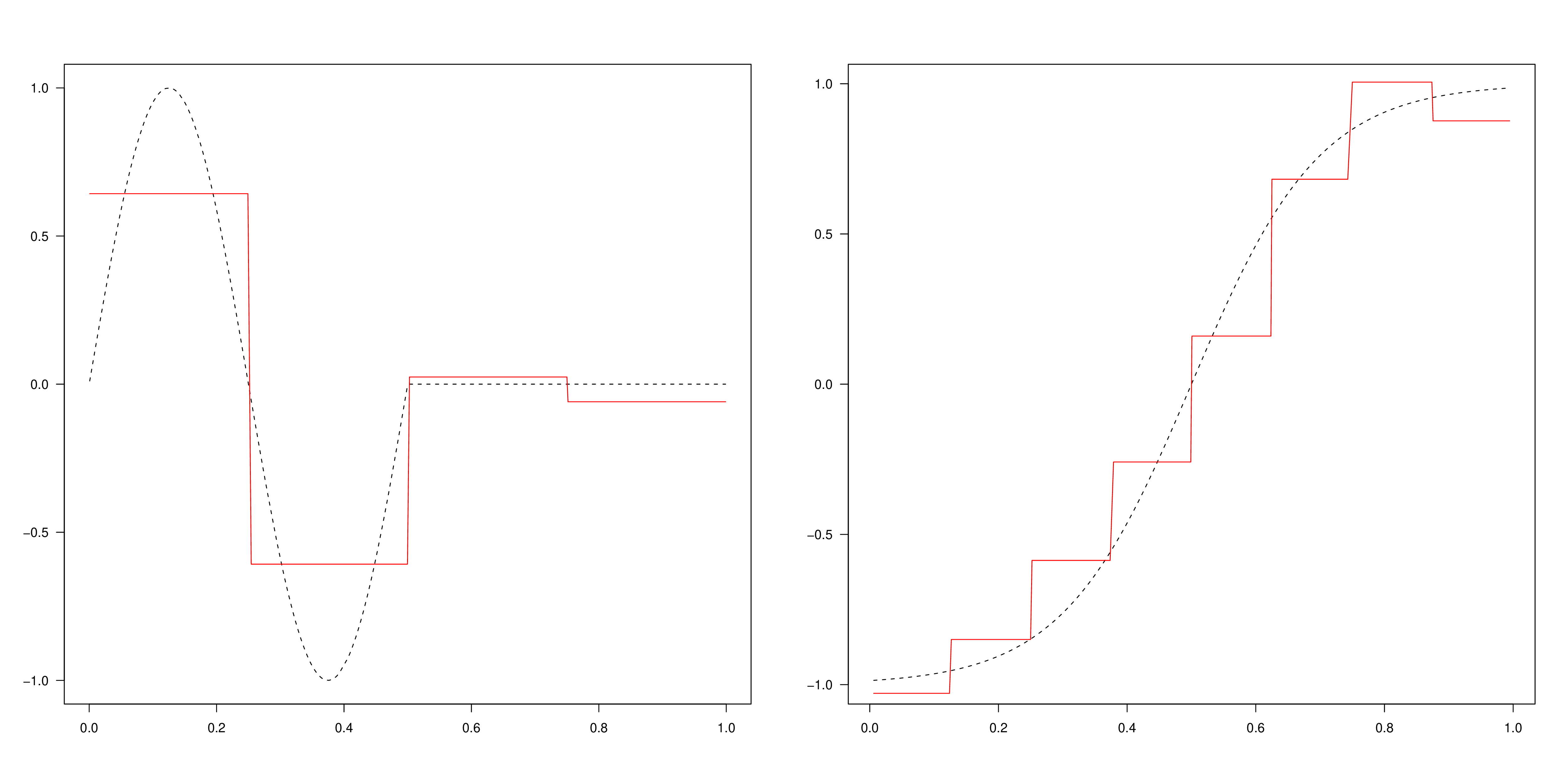}
\caption{Estimation of $s$ (dashed) by $\tilde{s}$ (plain) with $\mathcal{F}^N$, $K=6$ and $t^j=f_j$, $j\in\{1,\dots,6\}$, for $s=f_1$ (left, $\rho(P_n)=1.24$) and for $s=f_5$ (right, $\rho(P_n)=1.25$).}
\label{fig:simu02_03}
\end{figure}

The second set of simulations is related to the large collection $\mathcal{F}^C$ and to the penalty function \eqref{Cpen1} with $C=4.5$. Figure \ref{fig:simu04_05} illustrates the estimation of $s=f_1$ and $s=f_2$ with $K=6$ parasitic components $t^j=f_j,\ j\in\{1,\dots,6\}$.

\begin{figure}[htb]
\includegraphics[width=0.4\textwidth]{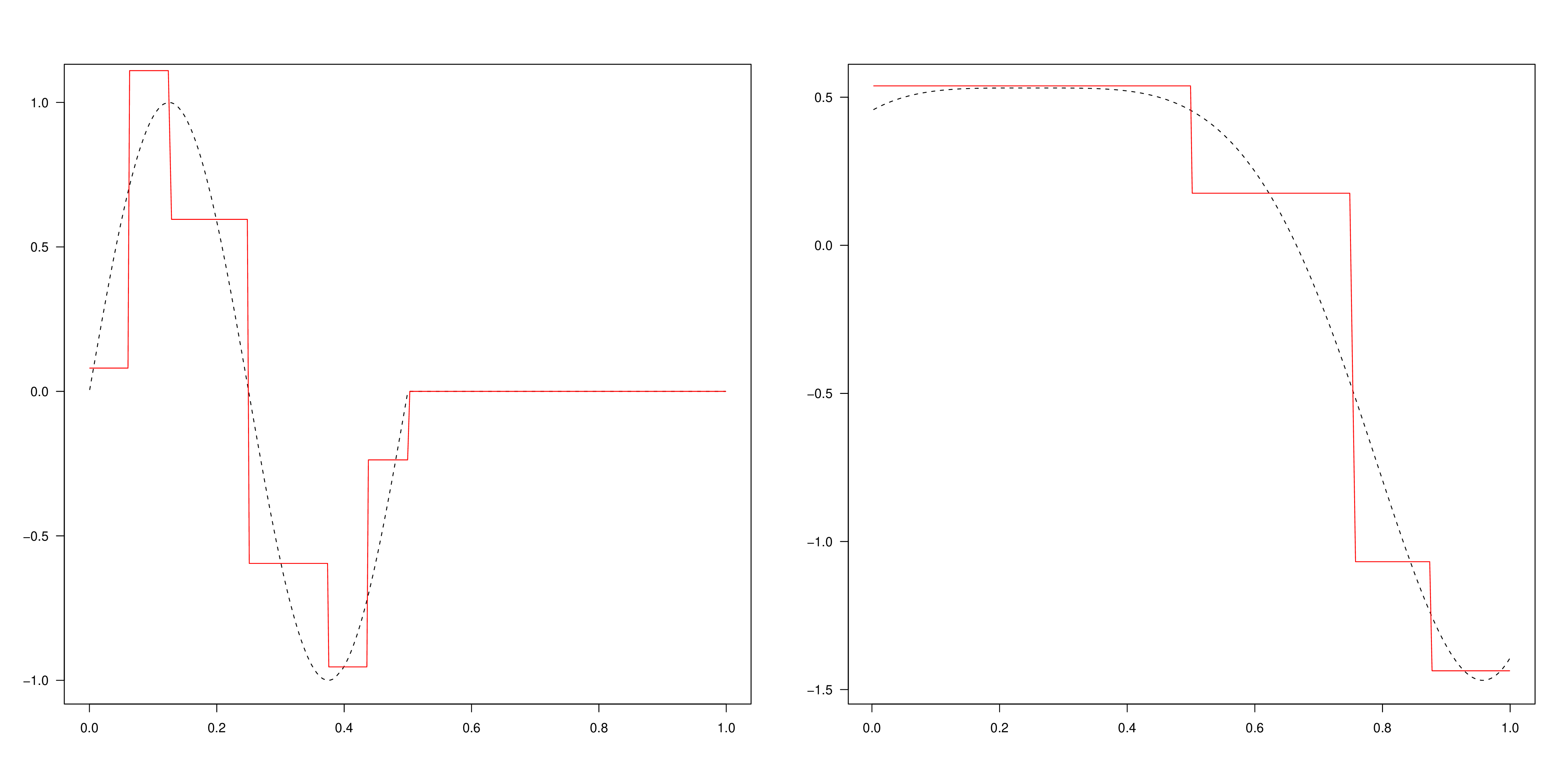}
\caption{Estimation of $s$ (dashed) by $\tilde{s}$ (plain) with $\mathcal{F}^C$, $K=6$ and $t^j=f_j$, $j\in\{1,\dots,6\}$, for $s=f_1$ (left, $\rho(P_n)=1.23$) and for $s=f_2$ (right, $\rho(P_n)=1.27$).}
\label{fig:simu04_05}
\end{figure}

In both cases, we see that the estimation procedure behaves well and that the norms $\rho(P_n)$ are close to one in spite of the presence of the parasitic components. Moreover, note that the collection $\mathcal{F}^C$ allows to get estimators that are sharper because they detect constant parts of $s$. This advantage leads to a better bias term in the quadratic risk decomposition at the price of the logarithmic term in the penalty \eqref{Cpen1}.

\subsection{Ratio estimation}

In Section \ref{sect_estim_comp}, we discussed about assumptions that ensure a small remainder term in Inequality \eqref{a_ineg02}. This result corresponds to some oracle type inequality for our estimation procedure of a component in an additive framework. We want to evaluate how far $\E\left[\Vert s-\tilde{s}\Vert_n^2\right]$ is from the oracle risk. Thus, we estimate the ratio
$$r_K(\tilde{s})=\frac{\E\left[\Vert s-\tilde{s}\Vert_n^2\right]}{\displaystyle{\inf_{m\in\mathcal{M}}\left\{\Vert s-s_m\Vert_n^2+\frac{\Tr(\tra{P_n}\pi_mP_n)}{n}\sigma^2\right\}}}$$
by repeating $500$ times each experiment for various values of $K$ and $C$. For each set of simulations, the parasitic components are taken such that $t^j=f_j,\ j\in\{1,\dots,K\}$, the values of $\rho(P_n)$ are given and the variance $\sigma^2$ is either assumed to be known or not.

Table \ref{table:simu02_ratio} (resp. Table \ref{table:simu03_ratio}) gives the values of $r_K(\tilde{s})$ obtained for $s=f_1$ (resp. $s=f_5$) with the collection $\mathcal{F}^N$ and the penalty \eqref{Npen1}. We clearly see that taking $C$ close to zero or too large is not a good thing for the procedure. In our examples, $C=1.5$ give good results and we get reasonable values of $r_K(\tilde{s})$ for other choices of $C$ between 1 and 3 for known or unknown variance. As expected, we also note that the values of $\rho(P_n)$ and $r_K(\tilde{s})$ tend to increase when $K$ goes up but remain acceptable for $K\in\{1,\dots,6\}$.

\begin{table}[htb]
\centering
\begin{tabular}{|*{12}{c|}}
\hline
$C$ & 0.0 & 0.5 & 1.0 & 1.5 & 2.0 & 2.5 & 3.0 & 3.5 & 4.0 & 4.5 & 5.0\\
\hline
\multirow{2}{2.5cm}{$K=1$, $\rho(P_n)=1.23$}
 & 2.41 & 1.36 & 1.15 & 1.13 & 1.11 & 1.10 & 1.09 & 1.08 & 1.08 & 1.08 & 1.08 \\  \cline{2-12}
 & 1.46 & 1.29 & 1.19 & 1.14 & 1.10 & 1.09 & 1.09 & 1.09 & 1.08 & 1.08 & 1.08 \\
\hline
\multirow{2}{2.5cm}{$K=2$, $\rho(P_n)=1.23$}
 & 2.47 & 1.37 & 1.16 & 1.14 & 1.13 & 1.12 & 1.11 & 1.09 & 1.09 & 1.09 & 1.09 \\  \cline{2-12}
 & 1.55 & 1.26 & 1.18 & 1.14 & 1.12 & 1.12 & 1.11 & 1.10 & 1.09 & 1.09 & 1.09 \\
\hline
\multirow{2}{2.5cm}{$K=3$, $\rho(P_n)=1.28$}
 & 2.48 & 1.39 & 1.15 & 1.13 & 1.12 & 1.10 & 1.09 & 1.08 & 1.08 & 1.08 & 1.08 \\  \cline{2-12}
 & 2.34 & 1.26 & 1.16 & 1.13 & 1.11 & 1.10 & 1.09 & 1.09 & 1.08 & 1.08 & 1.08 \\
\hline
\multirow{2}{2.5cm}{$K=4$, $\rho(P_n)=1.25$}
 & 2.65 & 1.41 & 1.17 & 1.14 & 1.13 & 1.11 & 1.09 & 1.08 & 1.08 & 1.08 & 1.08 \\  \cline{2-12}
 & 1.46 & 1.27 & 1.16 & 1.13 & 1.11 & 1.10 & 1.09 & 1.09 & 1.08 & 1.08 & 1.08 \\
\hline
\multirow{2}{2.5cm}{$K=5$, $\rho(P_n)=1.29$}
 & 2.97 & 1.62 & 1.27 & 1.19 & 1.15 & 1.12 & 1.10 & 1.09 & 1.08 & 1.07 & 1.07 \\  \cline{2-12}
 & 1.63 & 1.38 & 1.26 & 1.19 & 1.13 & 1.11 & 1.09 & 1.08 & 1.08 & 1.08 & 1.07 \\
\hline
\multirow{2}{2.5cm}{$K=6$, $\rho(P_n)=1.27$}
 & 3.14 & 1.77 & 1.29 & 1.21 & 1.17 & 1.13 & 1.12 & 1.10 & 1.10 & 1.09 & 1.09 \\  \cline{2-12}
 & 1.66 & 1.40 & 1.26 & 1.18 & 1.14 & 1.13 & 1.11 & 1.11 & 1.10 & 1.10 & 1.09 \\
\hline
\end{tabular}
\vspace{\baselineskip}
\caption{\label{table:simu02_ratio}Ratio $r_K(\tilde{s})$ for the estimation of $s=f_1$ with $\mathcal{F}^N$. Each pair of lines corresponds to a value of $K$ with the known $\sigma^2$ case on the first line and unknown $\sigma^2$ case on the second one.}
\end{table}

\begin{table}[htb]
\centering
\begin{tabular}{|*{12}{c|}}
\hline
$C$ & 0.0 & 0.5 & 1.0 & 1.5 & 2.0 & 2.5 & 3.0 & 3.5 & 4.0 & 4.5 & 5.0\\
\hline
\multirow{2}{2.5cm}{$K=1$, $\rho(P_n)=1.28$}
 & 4.08 & 1.52 & 1.22 & 1.20 & 1.27 & 1.35 & 1.45 & 1.56 & 1.64 & 1.70 & 1.79 \\  \cline{2-12}
 & 3.44 & 1.58 & 1.36 & 1.26 & 1.30 & 1.37 & 1.45 & 1.55 & 1.64 & 1.72 & 1.81 \\
\hline
\multirow{2}{2.5cm}{$K=2$, $\rho(P_n)=1.23$}
 & 4.07 & 1.66 & 1.28 & 1.26 & 1.32 & 1.40 & 1.49 & 1.57 & 1.66 & 1.74 & 1.82 \\  \cline{2-12}
 & 2.29 & 1.69 & 1.36 & 1.32 & 1.36 & 1.44 & 1.53 & 1.60 & 1.65 & 1.73 & 1.82 \\
\hline
\multirow{2}{2.5cm}{$K=3$, $\rho(P_n)=1.25$}
 & 4.17 & 1.65 & 1.36 & 1.34 & 1.42 & 1.50 & 1.60 & 1.67 & 1.77 & 1.89 & 2.01 \\  \cline{2-12}
 & 2.24 & 1.70 & 1.41 & 1.41 & 1.48 & 1.55 & 1.61 & 1.71 & 1.80 & 1.92 & 2.01 \\
\hline
\multirow{2}{2.5cm}{$K=4$, $\rho(P_n)=1.26$}
 & 4.42 & 1.88 & 1.43 & 1.34 & 1.36 & 1.45 & 1.53 & 1.61 & 1.69 & 1.77 & 1.86 \\  \cline{2-12}
 & 3.80 & 1.75 & 1.51 & 1.42 & 1.44 & 1.50 & 1.56 & 1.66 & 1.75 & 1.84 & 1.93 \\
\hline
\multirow{2}{2.5cm}{$K=5$, $\rho(P_n)=1.26$}
 & 4.57 & 1.82 & 1.43 & 1.37 & 1.39 & 1.46 & 1.53 & 1.60 & 1.67 & 1.76 & 1.83 \\  \cline{2-12}
 & 2.33 & 1.77 & 1.51 & 1.43 & 1.44 & 1.50 & 1.54 & 1.64 & 1.74 & 1.82 & 1.89 \\
\hline
\multirow{2}{2.5cm}{$K=6$, $\rho(P_n)=1.27$}
 & 4.98 & 2.08 & 1.59 & 1.47 & 1.45 & 1.49 & 1.57 & 1.66 & 1.77 & 1.86 & 1.96 \\  \cline{2-12}
 & 2.57 & 1.91 & 1.62 & 1.52 & 1.54 & 1.57 & 1.65 & 1.73 & 1.84 & 1.93 & 2.02 \\
\hline
\end{tabular}
\vspace{\baselineskip}
\caption{\label{table:simu03_ratio}Ratio $r_K(\tilde{s})$ for the estimation of $s=f_5$ with $\mathcal{F}^N$. Each pair of lines corresponds to a value of $K$ with the known $\sigma^2$ case on the first line and unknown $\sigma^2$ case on the second one.}
\end{table}

In the same way, we estimate the ratio $r_K(\tilde{s})$ for $s=f_1$ and $s=f_2$ with the collection $\mathcal{F}^C$ and the penalty \eqref{Cpen1}. The results are given in Table \ref{table:simu04_ratio} and Table \ref{table:simu05_ratio}. We obtain reasonable values of $r_K(\tilde{s})$ for choices of $C$ larger than what we took in the nested case. This phenomenon is related to what we mentioned at the end of Section \ref{a_section_known}. Indeed, for large collection of models, we need to overpenalize in order to keep the remainder term small enough. Moreover, because $\hat{\sigma}^2$ tends to overestimate $\sigma^2$ (see Section \ref{a_unknown_var}), we see that we can consider smaller values for $C$ when the variance is unknown than when it is known for obtaining equivalent results.

\begin{table}[htb]
\centering
\begin{tabular}{|*{12}{c|}}
\hline
$C$ & 0.0 & 0.5 & 1.0 & 1.5 & 2.0 & 2.5 & 3.0 & 3.5 & 4.0 & 4.5 & 5.0\\
\hline
\multirow{2}{2.5cm}{$K=1$, $\rho(P_n)=1.27$}
 & 1.54 & 1.49 & 1.44 & 1.40 & 1.36 & 1.33 & 1.31 & 1.30 & 1.28 & 1.27 & 1.25 \\  \cline{2-12}
 & 1.50 & 1.44 & 1.39 & 1.35 & 1.32 & 1.30 & 1.28 & 1.26 & 1.25 & 1.24 & 1.23 \\
\hline
\multirow{2}{2.5cm}{$K=2$, $\rho(P_n)=1.25$}
 & 1.60 & 1.53 & 1.48 & 1.45 & 1.40 & 1.37 & 1.34 & 1.32 & 1.29 & 1.28 & 1.26 \\  \cline{2-12}
 & 1.54 & 1.48 & 1.42 & 1.38 & 1.35 & 1.32 & 1.29 & 1.28 & 1.27 & 1.25 & 1.24 \\
\hline
\multirow{2}{2.5cm}{$K=3$, $\rho(P_n)=1.25$}
 & 1.56 & 1.50 & 1.46 & 1.42 & 1.38 & 1.35 & 1.32 & 1.30 & 1.28 & 1.27 & 1.26 \\  \cline{2-12}
 & 1.51 & 1.45 & 1.41 & 1.37 & 1.34 & 1.31 & 1.29 & 1.27 & 1.25 & 1.24 & 1.23 \\
\hline
\multirow{2}{2.5cm}{$K=4$, $\rho(P_n)=1.25$}
 & 1.61 & 1.54 & 1.48 & 1.42 & 1.39 & 1.36 & 1.34 & 1.31 & 1.29 & 1.28 & 1.27 \\  \cline{2-12}
 & 1.51 & 1.44 & 1.40 & 1.36 & 1.32 & 1.31 & 1.28 & 1.27 & 1.26 & 1.25 & 1.24 \\
\hline
\multirow{2}{2.5cm}{$K=5$, $\rho(P_n)=1.25$}
 & 1.68 & 1.61 & 1.54 & 1.48 & 1.44 & 1.41 & 1.37 & 1.34 & 1.32 & 1.30 & 1.28 \\  \cline{2-12}
 & 1.56 & 1.49 & 1.43 & 1.39 & 1.36 & 1.31 & 1.29 & 1.27 & 1.27 & 1.26 & 1.25 \\
\hline
\multirow{2}{2.5cm}{$K=6$, $\rho(P_n)=1.24$}
 & 1.78 & 1.70 & 1.63 & 1.57 & 1.53 & 1.48 & 1.44 & 1.42 & 1.39 & 1.35 & 1.34 \\  \cline{2-12}
 & 1.61 & 1.55 & 1.48 & 1.44 & 1.40 & 1.37 & 1.34 & 1.32 & 1.30 & 1.28 & 1.28 \\
\hline
\end{tabular}
\vspace{\baselineskip}
\caption{\label{table:simu04_ratio}Ratio $r_K(\tilde{s})$ for the estimation of $s=f_1$ with $\mathcal{F}^C$. Each pair of lines corresponds to a value of $K$ with the known $\sigma^2$ case on the first line and unknown $\sigma^2$ case on the second one.}
\end{table}

\begin{table}[htb]
\centering
\begin{tabular}{|*{12}{c|}}
\hline
$C$ & 0.0 & 0.5 & 1.0 & 1.5 & 2.0 & 2.5 & 3.0 & 3.5 & 4.0 & 4.5 & 5.0\\
\hline
\multirow{2}{2.5cm}{$K=1$, $\rho(P_n)=1.28$}
 & 2.01 & 1.92 & 1.86 & 1.80 & 1.76 & 1.74 & 1.70 & 1.70 & 1.68 & 1.67 & 1.68 \\  \cline{2-12}
 & 2.03 & 1.93 & 1.87 & 1.81 & 1.77 & 1.72 & 1.68 & 1.65 & 1.65 & 1.66 & 1.67 \\
\hline
\multirow{2}{2.5cm}{$K=2$, $\rho(P_n)=1.22$}
 & 2.02 & 1.93 & 1.85 & 1.79 & 1.75 & 1.71 & 1.68 & 1.66 & 1.66 & 1.66 & 1.66 \\  \cline{2-12}
 & 1.95 & 1.88 & 1.82 & 1.78 & 1.75 & 1.71 & 1.68 & 1.67 & 1.65 & 1.64 & 1.64 \\
\hline
\multirow{2}{2.5cm}{$K=3$, $\rho(P_n)=1.26$}
 & 2.04 & 1.93 & 1.86 & 1.81 & 1.76 & 1.71 & 1.68 & 1.64 & 1.62 & 1.62 & 1.62 \\  \cline{2-12}
 & 1.96 & 1.87 & 1.80 & 1.74 & 1.68 & 1.66 & 1.63 & 1.63 & 1.61 & 1.62 & 1.62 \\
\hline
\multirow{2}{2.5cm}{$K=4$, $\rho(P_n)=1.25$}
 & 2.12 & 2.00 & 1.90 & 1.81 & 1.73 & 1.67 & 1.64 & 1.62 & 1.60 & 1.61 & 1.60 \\  \cline{2-12}
 & 1.99 & 1.90 & 1.80 & 1.73 & 1.68 & 1.65 & 1.62 & 1.60 & 1.60 & 1.60 & 1.60 \\
\hline
\multirow{2}{2.5cm}{$K=5$, $\rho(P_n)=1.24$}
 & 2.47 & 2.34 & 2.23 & 2.17 & 2.10 & 2.05 & 1.99 & 1.95 & 1.91 & 1.88 & 1.86 \\  \cline{2-12}
 & 2.30 & 2.20 & 2.11 & 2.03 & 1.97 & 1.92 & 1.88 & 1.83 & 1.82 & 1.80 & 1.80 \\
\hline
\multirow{2}{2.5cm}{$K=6$, $\rho(P_n)=1.26$}
 & 2.45 & 2.32 & 2.21 & 2.11 & 2.03 & 1.99 & 1.95 & 1.91 & 1.89 & 1.86 & 1.84 \\  \cline{2-12}
 & 2.17 & 2.06 & 1.99 & 1.94 & 1.89 & 1.85 & 1.84 & 1.80 & 1.79 & 1.79 & 1.75 \\
\hline
\end{tabular}
\vspace{\baselineskip}
\caption{\label{table:simu05_ratio}Ratio $r_K(\tilde{s})$ for the estimation of $s=f_2$ with $\mathcal{F}^C$. Each pair of lines corresponds to a value of $K$ with the known $\sigma^2$ case on the first line and unknown $\sigma^2$ case on the second one.}
\end{table}

\subsection{Parasitic components equal to zero}

We are now interested in the particular case of parasitic components $t^j$ equal to zero in \eqref{a_frame}, \text{i.e.} data are given by
$$Z_i=s(x_i)+\sigma\varepsilon_i,\ i=1,\dots,n\ .$$
If we know that these $K$ components are zero and if we deal with the collection $\mathcal{F}^N$ and a known variance $\sigma^2$, we can consider the classical model selection procedure given by
\begin{equation}\label{bm_proc}
\hat{m}_0\in\argmin_{m\in\mathcal{M}}\left\{\Vert Z-\pi_mZ\Vert_n^2+C\frac{\dim(S_m)}{n}\sigma^2\right\}\ .
\end{equation}
Then, we can define the estimator $\tilde{s}_0=\pi_{\hat{m}_0}Z$. This procedure is well known and we refer to \cite{Mas07} for more details. If we do not know that the $K$ parasitic components are null, we can use our procedure to estimate $s$ by $\tilde{s}$. In order to compare the performances of $\tilde{s}$ and $\tilde{s}_0$ with respect to the number $K$ of zero parasitic components, we estimate the ratio
$$r_K(\tilde{s},\tilde{s}_0)=\frac{\E[\Vert s-\tilde{s}\Vert_n^2]}{\E[\Vert s-\tilde{s}_0\Vert_n^2]}$$
for various values of $K$ and $C$ by repeating 500 times each experiment.

The obtained results are given in Tables \ref{table:simu06_ratio} and \ref{table:simu07_ratio} for $s=f_1$ and $s=f_5$ respectively. Obviously, the ratio $r_K(\tilde{s},\tilde{s}_0)$ is always larger than one because the procedure \eqref{bm_proc} makes good use of its knowledge about nullity of the $t^j$. Nevertheless, we see that our procedure performs nearly as well as \eqref{bm_proc} even for a large number of zero components. Indeed, for $K\in\{1,\dots,9\}$, do not assuming that we know that the $t^j$ are zero only implies a loss between $1\%$ and $10\%$ for the risk. Such a loss remains acceptable in practice and allows us to consider more general framework for estimating $s$.

\begin{table}[htb]
\centering
\begin{tabular}{|*{12}{c|}}
\hline
$C$ & 0.0 & 0.5 & 1.0 & 1.5 & 2.0 & 2.5 & 3.0 & 3.5 & 4.0 & 4.5 & 5.0\\
\hline
$K=1$
 & 1.11 & 1.11 & 1.09 & 1.06 & 1.04 & 1.03 & 1.03 & 1.02 & 1.01 & 1.02 & 1.02 \\
\hline
$K=2$
 & 1.12 & 1.08 & 1.08 & 1.06 & 1.04 & 1.03 & 1.02 & 1.01 & 1.01 & 1.01 & 1.01 \\
\hline
$K=3$
 & 1.13 & 1.09 & 1.07 & 1.07 & 1.05 & 1.03 & 1.01 & 1.01 & 1.02 & 1.02 & 1.02 \\
\hline
$K=4$
 & 1.08 & 1.08 & 1.06 & 1.05 & 1.04 & 1.02 & 1.02 & 1.01 & 1.01 & 1.01 & 1.01 \\
\hline
$K=5$
 & 1.10 & 1.05 & 1.06 & 1.06 & 1.03 & 1.02 & 1.02 & 1.01 & 1.01 & 1.01 & 1.01 \\
\hline
$K=6$
 & 1.08 & 1.07 & 1.06 & 1.05 & 1.03 & 1.02 & 1.01 & 1.01 & 1.01 & 1.01 & 1.01 \\
\hline
$K=7$
 & 1.11 & 1.09 & 1.08 & 1.05 & 1.03 & 1.02 & 1.01 & 1.01 & 1.01 & 1.01 & 1.01 \\
\hline
$K=8$
 & 1.09 & 1.06 & 1.08 & 1.05 & 1.04 & 1.02 & 1.01 & 1.01 & 1.01 & 1.01 & 1.01 \\
\hline
$K=9$
 & 1.10 & 1.08 & 1.07 & 1.05 & 1.03 & 1.02 & 1.01 & 1.01 & 1.01 & 1.01 & 1.01 \\
\hline
\end{tabular}
\vspace{\baselineskip}
\caption{\label{table:simu06_ratio}Ratio $r_K(\tilde{s},\tilde{s}_0)$ for the estimation of $s=f_1$ with $\mathcal{F}^N$.}
\end{table}

\begin{table}[htb]
\centering
\begin{tabular}{|*{12}{c|}}
\hline
$C$ & 0.0 & 0.5 & 1.0 & 1.5 & 2.0 & 2.5 & 3.0 & 3.5 & 4.0 & 4.5 & 5.0\\
\hline
$K=1$
 & 1.08 & 1.09 & 1.07 & 1.07 & 1.09 & 1.09 & 1.08 & 1.07 & 1.06 & 1.09 & 1.07 \\
\hline
$K=2$
 & 1.09 & 1.05 & 1.08 & 1.09 & 1.09 & 1.08 & 1.08 & 1.08 & 1.06 & 1.06 & 1.05 \\
\hline
$K=3$
 & 1.12 & 1.12 & 1.11 & 1.07 & 1.09 & 1.10 & 1.09 & 1.08 & 1.07 & 1.06 & 1.07 \\
\hline
$K=4$
 & 1.09 & 1.11 & 1.08 & 1.10 & 1.10 & 1.09 & 1.07 & 1.06 & 1.07 & 1.06 & 1.07 \\
\hline
$K=5$
 & 1.10 & 1.08 & 1.09 & 1.09 & 1.09 & 1.06 & 1.06 & 1.06 & 1.07 & 1.07 & 1.05 \\
\hline
$K=6$
 & 1.08 & 1.04 & 1.06 & 1.07 & 1.08 & 1.07 & 1.07 & 1.09 & 1.06 & 1.06 & 1.06 \\
\hline
$K=7$
 & 1.06 & 1.05 & 1.07 & 1.08 & 1.10 & 1.09 & 1.07 & 1.09 & 1.08 & 1.07 & 1.06 \\
\hline
$K=8$
 & 1.08 & 1.13 & 1.08 & 1.09 & 1.09 & 1.08 & 1.06 & 1.07 & 1.07 & 1.06 & 1.06 \\
\hline
$K=9$
 & 1.13 & 1.05 & 1.09 & 1.09 & 1.07 & 1.07 & 1.07 & 1.06 & 1.07 & 1.07 & 1.06 \\
\hline
\end{tabular}
\vspace{\baselineskip}
\caption{\label{table:simu07_ratio}Ratio $r_K(\tilde{s},\tilde{s}_0)$ for the estimation of $s=f_5$ with $\mathcal{F}^N$.}
\end{table}

\section{Proofs}

In the proofs, we repeatedly use the following elementary inequality that holds for any $\alpha>0$ and $x,y\in\R$,
\begin{equation}\label{inegCarres}
2\vert xy\vert\leqslant\alpha x^2+\alpha^{-1}y^2\ .
\end{equation}

\subsection{Proofs of Theorems \ref{G_main_theo} and \ref{NG_th1}}

\subsubsection{Proof of Theorem \ref{G_main_theo}}

By definition of $\gamma_n$, for any $t\in\R^n$, we can write
$$\Vert s-t\Vert_n^2=\gamma_n(t)+2\sigma\langle t-Y,P_n\varepsilon\rangle_n+\sigma^2\Vert P_n\varepsilon\Vert_n^2\ .$$
Let $m\in\M$, since $\hat{s}_m=s_m+\sigma\pi_mP_n\varepsilon$, this identity and (\ref{prop_def_plse}) lead to
\begin{eqnarray}
\Vert s-\tilde{s}\Vert_n^2 & = & \Vert s-s_m\Vert_n^2+\gamma_n(\tilde{s})-\gamma_n(s_m)+2\sigma\langle \tilde{s}-s_m,P_n\varepsilon\rangle_n\nonumber\\
& = & \Vert s-s_m\Vert_n^2+\gamma_n(\tilde{s})-\gamma_n(\hat{s}_m)-\sigma^2\Vert\pi_mP_n\varepsilon\Vert_n^2\nonumber\\
& & \hspace{0.5cm}-2\sigma\langle s-\tilde{s},P_n\varepsilon\rangle_n+2\sigma\langle s-s_m,P_n\varepsilon\rangle_n\nonumber\\
& \leqslant & \Vert s-s_m\Vert_n^2+\pen(m)-\pen(\hat{m})+2\sigma^2\Vert\pi_{\hat{m}}P_n\varepsilon\Vert_n^2\label{G_ineg1}\\
& & \hspace{0.5cm}-2\sigma\langle s-s_{\hat{m}},P_n\varepsilon\rangle_n+2\sigma\langle s-s_m,P_n\varepsilon\rangle_n-\sigma^2\Vert\pi_mP_n\varepsilon\Vert_n^2\ .\nonumber
\end{eqnarray}
Consider an arbitrary $a_m\in S_m^{\perp}$ such that $\Vert a_m\Vert_n=1$, we define
\begin{equation}\label{um_def}
u_m=\left\{
\begin{array}{ll}
(s-s_m)/\Vert s-s_m\Vert_n & \text{if }s\neq\pi_ms\\
a_m & \text{otherwise .}
\end{array}
\right.
\end{equation}
Thus, (\ref{G_ineg1}) gives
\begin{eqnarray}
\Vert s-\tilde{s}\Vert_n^2 & \leqslant & \Vert s-s_m\Vert_n^2+\pen(m)-\pen(\hat{m})+2\sigma^2\Vert\pi_{\hat{m}}P_n\varepsilon\Vert_n^2\\
& & +2\sigma\Vert s-s_{\hat{m}}\Vert_n\vert\langle u_{\hat{m}},P_n\varepsilon\rangle_n\vert+2\sigma\langle s-s_m,P_n\varepsilon\rangle_n-\sigma^2\Vert\pi_mP_n\varepsilon\Vert_n^2\ .\nonumber
\end{eqnarray}
Take $\alpha\in(0,1)$ that we specify later and we use the inequality (\ref{inegCarres}),
\begin{eqnarray}
(1-\alpha)\Vert s-\tilde{s}\Vert_n^2 & \leqslant & \Vert s-s_m\Vert_n^2+\pen(m)-\pen(\hat{m})\label{ineg01}\\
& & \hspace{0.5cm}+(2-\alpha)\sigma^2\Vert\pi_{\hat{m}}P_n\varepsilon\Vert_n^2+\alpha^{-1}\sigma^2\langle u_{\hat{m}},P_n\varepsilon\rangle_n^2\nonumber\\
& & \hspace{0.5cm}+2\sigma\langle s-s_m,P_n\varepsilon\rangle_n-\sigma^2\Vert\pi_mP_n\varepsilon\Vert_n^2\nonumber\ .
\end{eqnarray}
We choose $\alpha=1/(1+\theta)\in(0,1)$ but for legibility we keep using the notation $\alpha$. Let us now introduce two functions $p_1,p_2:\M\rightarrow\R_+$ that will be specified later to satisfy, for all $m\in\M$,
\begin{equation}\label{G_hyp_pen_alpha}
\pen(m)\geqslant(2-\alpha)p_1(m)+\alpha^{-1}p_2(m)\ .
\end{equation}

We use this bound in (\ref{ineg01}) to obtain
\begin{eqnarray*}
(1-\alpha)\Vert s-\tilde{s}\Vert_n^2 & \leqslant & \Vert s-s_m\Vert_n^2+\pen(m)+(2-\alpha)\left(\sigma^2\Vert\pi_{\hat{m}}P_n\varepsilon\Vert_n^2-p_1(\hat{m})\right)\\
& & \hspace{0.5cm}+\alpha^{-1}\left(\sigma^2\langle u_{\hat{m}},P_n\varepsilon\rangle_n^2-p_2(\hat{m})\right)+2\sigma\langle s-s_m,P_n\varepsilon\rangle_n\\
& & \hspace{0.5cm}-\sigma^2\Vert\pi_mP_n\varepsilon\Vert_n^2\\
& \leqslant & \Vert s-s_m\Vert_n^2+\pen(m)+2\sigma\langle s-s_m,P_n\varepsilon\rangle_n-\sigma^2\Vert\pi_mP_n\varepsilon\Vert_n^2\\
& & \hspace{0.5cm}+(2-\alpha)\sup_{m'\in\M}\left(\sigma^2\Vert\pi_{m'}P_n\varepsilon\Vert_n^2-p_1(m')\right)_+\\
& & \hspace{0.5cm}+\alpha^{-1}\sup_{m'\in\M}\left(\sigma^2\langle u_{m'},P_n\varepsilon\rangle_n^2-p_2(m')\right)_+\ .
\end{eqnarray*}
Taking the expectation on both sides, it leads to
\begin{eqnarray*}
(1-\alpha)\E\left[\Vert s-\tilde{s}\Vert_n^2\right] & \leqslant & \Vert s-s_m\Vert_n^2+\pen(m)-\Tr(\tra{P_n}\pi_mP_n)\sigma^2/n\\
& & \hspace{0.5cm}+(2-\alpha)\E\left[\sup_{m'\in\M}\left(\sigma^2\Vert\pi_{m'}P_n\varepsilon\Vert_n^2-p_1(m')\right)_+\right]\\
& & \hspace{0.5cm}+\alpha^{-1}\E\left[\sup_{m'\in\M}\left(\sigma^2\langle u_{m'},P_n\varepsilon\rangle_n^2-p_2(m')\right)_+\right]\\
& \leqslant & \Vert s-s_m\Vert_n^2+\pen(m)-\Tr(\tra{P_n}\pi_mP_n)\sigma^2/n\\
& & \hspace{0.5cm}+(2-\alpha)\sum_{m'\in\M}\E\left[\left(\sigma^2\Vert\pi_{m'}P_n\varepsilon\Vert_n^2-p_1(m')\right)_+\right]\\
& & \hspace{0.5cm}+\alpha^{-1}\sum_{m'\in\M}\E\left[\left(\sigma^2\langle u_{m'},P_n\varepsilon\rangle_n^2-p_2(m')\right)_+\right]\\
& \leqslant & \Vert s-s_m\Vert_n^2+\pen(m)-\Tr(\tra{P_n}\pi_mP_n)\sigma^2/n\\
& & \hspace{0.5cm}+(2-\alpha)\sum_{m'\in\M}\E_{1,m'}+\alpha^{-1}\sum_{m'\in\M}\E_{2,m'}\ .
\end{eqnarray*}
Because the choice of $m$ is arbitrary among $\M$, we can infer that
\begin{eqnarray}
(1-\alpha)\E\left[\Vert s-\tilde{s}\Vert_n^2\right] & \leqslant & \inf_{m\in\M}\left\{\Vert s-s_m\Vert_n^2+\pen(m)-\Tr(\tra{P_n}\pi_mP_n)\sigma^2/n\right\}\nonumber\\
& & \hspace{0.5cm}+(2-\alpha)\sum_{m\in\M}\E_{1,m}+\alpha^{-1}\sum_{m\in\M}\E_{2,m}\label{G_domin_E}\ .
\end{eqnarray}

We now have to upperbound $\E_{1,m}$ and $\E_{2,m}$ in (\ref{G_domin_E}). Let start by the first one. If $S_m=\{0\}$, then $\pi_mP_n=0$ and $p_1(m)\geqslant0$ suffices to ensure that $\E_{1,m}=0$. So, we can consider that the dimension of $S_m$ is positive and $\pi_mP_n\neq0$. The Lemma \ref{LM} applied with $A=\pi_mP_n$ gives, for any $x>0$,
\begin{equation}\label{inegLM01}
\Pg\left(n\Vert \pi_mP_n\varepsilon\Vert_n^2\geqslant\Tr(\tra{P_n}\pi_mP_n)+2\sqrt{\rho^2(P_n)\Tr(\tra{P_n}\pi_mP_n)x}+2\rho^2(P_n)x\right)\leqslant e^{-x}
\end{equation}
because $\rho(\pi_mP_n)\leqslant\rho(\pi_m)\rho(P_n)\leqslant\rho(P_n)$. Let $\beta=\theta^2/(1+2\theta)>0$, (\ref{inegCarres}) and (\ref{inegLM01}) lead to
\begin{equation}\label{inegLM02}
\Pg\left(n\Vert \pi_mP_n\varepsilon\Vert_n^2\geqslant(1+\beta)\Tr(\tra{P_n}\pi_mP_n)+(2+\beta^{-1})\rho^2(P_n)x\right)\leqslant e^{-x}\ .
\end{equation}
Let $\delta=\theta^2/((1+\theta)(1+2\theta+2\theta^2))>0$, we set
$$np_1(m)=((1+\beta)+(2+\beta^{-1})\delta L_m)\Tr(\tra{P_n}\pi_mP_n)\sigma^2$$
and (\ref{inegLM02}) implies
\begin{eqnarray}
\E_{m,1} & = & \int_0^{\infty}\Pg\left(\left(\sigma^2\Vert\pi_mP_n\varepsilon\Vert_n^2-p_1(m)\right)_+\geqslant\xi\right)d\xi\nonumber\\
& = & \int_0^{\infty}\Pg\left(n\Vert\pi_mP_n\varepsilon\Vert_n^2-np_1(m)/\sigma^2\geqslant n\xi/\sigma^2\right)d\xi\nonumber\\
& \leqslant & \int_0^{\infty}\exp\left(-\frac{\delta L_m\Tr(\tra{P_n}\pi_mP_n)}{\rho^2(P_n)}-\frac{n\xi}{(2+\beta^{-1})\rho^2(P_n)\sigma^2}\right)d\xi\nonumber\\
& \leqslant & \frac{(2+\beta^{-1})\rho^2(P_n)\sigma^2}{n}\exp\left(-\frac{\delta L_m\Tr(\tra{P_n}\pi_mP_n)}{\rho^2(P_n)}\right)\ .\label{inegGP1}
\end{eqnarray}

We now focus on $\E_{m,2}$. The random variable $\langle u_m,P_n\varepsilon\rangle_n=\langle \tra{P_n}u_m,\varepsilon\rangle_n$ is a centered Gaussian variable with variance $\Vert\tra{P_n}u_m\Vert_n^2/n$. For any $x>0$, the standard Gaussian deviation inequality gives
$$\Pg\left(\vert\langle u_m,P_n\varepsilon\rangle_n\vert\geqslant x\right)\leqslant \exp\left(-\frac{nx^2}{2\Vert\tra{P_n}u_m\Vert_n^2}\right)\leqslant\exp\left(-\frac{nx^2}{2\rho^2(P_n)}\right)$$
that is equivalent to
\begin{equation}\label{G_conc_classique}
\Pg\left(n\langle u_m,P_n\varepsilon\rangle_n^2\geqslant 2\rho^2(P_n)x\right)\leqslant e^{-x}\ .
\end{equation}
We set
$$np_2(m)=2\delta L_m\Tr(\tra{P_n}\pi_mP_n)\sigma^2$$
and (\ref{G_conc_classique}) leads to
\begin{eqnarray}
\E_{m,2} & = & \int_0^{\infty}\Pg\left(\left(\sigma^2\langle u_m,P_n\varepsilon\rangle_n^2-p_2(m)\right)_+\geqslant\xi\right)d\xi\nonumber\\
& = & \int_0^{\infty}\Pg\left(\langle u_m,P_n\varepsilon\rangle_n^2-np_2(m)/\sigma^2\geqslant n\xi/\sigma^2\right)d\xi\nonumber\\
& \leqslant & \int_0^{\infty}\exp\left(-\frac{\delta L_m\Tr(\tra{P_n}\pi_mP_n)}{\rho^2(P_n)}-\frac{n\xi}{2\rho^2(P_n)\sigma^2}\right)d\xi\nonumber\\
& \leqslant & \frac{2\rho^2(P_n)\sigma^2}{n}\exp\left(-\frac{\delta L_m\Tr(\tra{P_n}\pi_mP_n)}{\rho^2(P_n)}\right)\ .\label{inegGP2}
\end{eqnarray}

We inject (\ref{inegGP1}) and (\ref{inegGP2}) in (\ref{G_domin_E}) and we replace $\alpha$, $\beta$ and $\delta$ to obtain
\begin{equation*}
\frac{\theta}{\theta+1}\E\left[\Vert s-\tilde{s}\Vert_n^2\right] \leqslant \inf_{m\in\M}\left\{\Vert s-s_m\Vert_n^2+\pen(m)-\Tr(\tra{P_n}\pi_mP_n)\sigma^2/n\right\}+\frac{\rho^2(P_n)\sigma^2}{n}R_{\theta}
\end{equation*}
where we have set
\begin{equation*}
R_{\theta} = c_{\theta}\sum_{m\in\M}\exp\left(-\frac{L_m\Tr(\tra{P_n}\pi_mP_n)}{c_{\theta}\rho^2(P_n)}\right)
\end{equation*}
and
$$c_{\theta}=\frac{2\theta^4+8\theta^3+8\theta^2+4\theta+1}{\theta^2(1+\theta)}\ .$$
Finally, (\ref{G_hyp_pen_alpha}) gives a penalty as (\ref{G_pen_theo_gen}) and the announced result follows.

\subsubsection{Proof of Theorem \ref{NG_th1}}

In order to prove Theorem \ref{NG_th1}, we show the following stronger result. Under the assumptions of the theorem, there exists a positive constant $C$ that only depends on $p$ and $\theta$, such that, for any $z>0$,
\begin{equation}\label{NG_gen_result}
\Pg\left(\frac{\theta}{\theta+2}\mathcal{H}_+\geqslant\frac{\rho^2(P_n)\sigma^2}{n}z\right)\leqslant C\tau_p\left[N_0\left(1\wedge z^{-p/2}\right)+R_{P_n,p}(\mathcal{F},z)\right]
\end{equation}
where the quantity $\mathcal{H}$ is defined by
$$\mathcal{H}=\Vert s-\tilde{s}\Vert_n^2-\frac{\theta+4}{\theta}\inf_{m\in\M}\left\{\Vert s-s_m\Vert_n^2+\frac{2(\theta+2)}{\theta+4}\pen(m)\right\}$$
and we have set $R_{P_n,p}(\mathcal{F},z)$ equal to
$$\sum_{m\in\M:S_m\neq\{0\}}\left(1+\frac{\Tr(\tra{P_n}\pi_mP_n)}{\rho(\tra{P_n}\pi_mP_n)}\right)\left(\frac{L_m\Tr(\tra{P_n}\pi_mP_n)}{\rho^2(P_n)}+z\right)^{-p/2}\ .$$

Thus, for any $q>0$ such that $2(q+1)<p$, we integrate (\ref{NG_gen_result}) via Lemma \ref{lemInt} to get
\begin{eqnarray}
\E\left[\mathcal{H}_+^q\right] & = & \int_0^{\infty}qt^{q-1}\Pg\left(\mathcal{H}_+\geqslant t\right)dt\nonumber\\
& = & \left(\frac{(\theta+2)\rho^2(P_n)\sigma^2}{\theta n}\right)^q\int_0^{\infty}q z^{q-1}\Pg\left(\frac{\theta}{\theta+2}\mathcal{H}_+\geqslant\frac{\rho^2(P_n)\sigma^2}{n}z\right)dz\nonumber\\
& \leqslant & C'(p,q,\theta)\tau_p\left(\frac{\rho^2(P_n)\sigma^2}{n}\right)^qR_{P_n,\theta}^{p,q}(\mathcal{F})\label{NG_ineg04}
\end{eqnarray}
where we have set
$$R_{P_n,\theta}^{p,q}(\mathcal{F})=N_0 + \sum_{m\in\M:S_m\neq\{0\}}\left(1+\frac{\Tr(\tra{P_n}\pi_mP_n)}{\rho(\tra{P_n}\pi_mP_n)}\right)\left(\frac{L_m\Tr(\tra{P_n}\pi_mP_n)}{\rho^2(P_n)}\right)^{q-p/2}\ .$$
Since
$$\E\left[\Vert s-\tilde{s}\Vert_n^{2q}\right]^{1/q}\leqslant\E\left[\left(\frac{\theta+8}{\theta}\inf_{m\in\M}\left\{\Vert s-s_m\Vert_n^2+\frac{2(\theta+4)}{\theta+8}\pen(m)\right\}+\mathcal{H}_+\right)^{q}\right]^{1/q}\ ,$$
it follows from Minkowski's Inequality when $q\geqslant 1$ or convexity arguments when $0<q<1$ that
\begin{equation}\label{NG_ineg05}
\E\left[\Vert s-\tilde{s}\Vert_n^{2q}\right]^{1/q}\leqslant2^{(q^{-1}-1)_+}\left(C''(\theta)\inf_{m\in\M}\left\{\Vert s-s_m\Vert_n^2+\pen(m)\right\}+\E\left[\mathcal{H}_+^q\right]^{1/q}\right)\ .\end{equation}
Inequality (\ref{NG_th1_res}) directly follows from (\ref{NG_ineg04}) and (\ref{NG_ineg05}).

We now turn to the proof of (\ref{NG_gen_result}). Inequality (\ref{ineg01}) does not depend on the distribution of $\varepsilon$ and we start from here. Let $\alpha=\alpha(\theta)\in(0,1)$, for any $m\in\M$ we have
\begin{eqnarray*}
(1-\alpha)\Vert s-\tilde{s}\Vert_n^2 & \leqslant & \Vert s-s_m\Vert_n^2+\pen(m)-\pen(\hat{m})+(2-\alpha)\sigma^2\Vert\pi_{\hat{m}}P_n\varepsilon\Vert_n^2\\
& & \hspace{0.5cm}+\alpha^{-1}\sigma^2\langle u_{\hat{m}},P_n\varepsilon\rangle_n^2+2\sigma\langle s-s_m, P_n\varepsilon\rangle_n
\end{eqnarray*}
where $u_m$ is defined by (\ref{um_def}). Use again (\ref{inegCarres}) with $\alpha$ to obtain
\begin{eqnarray}
(1-\alpha)\Vert s-\tilde{s}\Vert_n^2 & \leqslant & \Vert s-s_m\Vert_n^2+\pen(m)-\pen(\hat{m})+(2-\alpha)\sigma^2\Vert\pi_{\hat{m}}P_n\varepsilon\Vert_n^2\nonumber\\
& & \hspace{0.5cm}+\alpha^{-1}\sigma^2\langle u_{\hat{m}},P_n\varepsilon\rangle_n^2+2\sigma\Vert s-s_m\Vert_n\vert\langle u_m, P_n\varepsilon\rangle_n\vert\nonumber\\
& \leqslant & (1+\alpha)\Vert s-s_m\Vert_n^2+\pen(m)-\pen(\hat{m})\label{NG_ineg01}\\
& & \hspace{0.5cm}+(2-\alpha)\sigma^2\Vert\pi_{\hat{m}}P_n\varepsilon\Vert_n^2\nonumber\\
& & \hspace{0.5cm}+\alpha^{-1}\sigma^2\langle u_{\hat{m}},P_n\varepsilon\rangle_n^2+\alpha^{-1}\sigma^2\langle u_m, P_n\varepsilon\rangle_n^2\ .\nonumber
\end{eqnarray}
Let us now introduce two functions $\bar{p}_1,\bar{p}_2:\M\rightarrow\R_+$ that will be specified later and that satisfy,
\begin{equation}\label{NG_minor_pen}
\forall m\in\M,\ \pen(m)\geqslant(2-\alpha)\bar{p}_1(m)+\alpha^{-1}\bar{p}_2(m)\ .
\end{equation}
Thus, Inequality (\ref{NG_ineg01}) implies
\begin{eqnarray*}
(1-\alpha)\Vert s-\tilde{s}\Vert_n^2 & \leqslant & (1+\alpha)\Vert s-s_m\Vert_n^2+\pen(m)+\alpha^{-1}\bar{p}_2(m)\\
& & \hspace{0.5cm}+(2-\alpha)\left(\sigma^2\Vert\pi_{\hat{m}}P_n\varepsilon\Vert_n^2-\bar{p}_1(\hat{m})\right)\\
& & \hspace{0.5cm}+\alpha^{-1}\left(\sigma^2\langle u_{\hat{m}},P_n\varepsilon\rangle_n^2-\bar{p}_2(\hat{m})\right)\\
& & \hspace{0.5cm}+\alpha^{-1}\left(\sigma^2\langle u_{m},P_n\varepsilon\rangle_n^2-\bar{p}_2(m)\right)\\
& \leqslant & (1+\alpha)\left(\Vert s-s_m\Vert_n^2+2\pen(m)/(1+\alpha)\right)\\
& & \hspace{0.5cm}+(2-\alpha)\sup_{m'\in\M}\left(\sigma^2\Vert\pi_{m'}P_n\varepsilon\Vert_n^2-\bar{p}_1(m')\right)_+\\
& & \hspace{0.5cm}+2\alpha^{-1}\sup_{m'\in\M}\left(\sigma^2\langle u_{m'},P_n\varepsilon\rangle_n^2-\bar{p}_2(m')\right)_+\ .
\end{eqnarray*}
Because the choice of $m$ is arbitrary among $\M$, we can infer that, for any $\xi>0$,
\begin{eqnarray}\Pg\left((1-\alpha)\mathcal{H}_+\geqslant\xi\right) & \leqslant & \Pg\left((2-\alpha)\sup_{m\in\M}\left(\sigma^2\Vert\pi_mP_n\varepsilon\Vert_n^2-\bar{p}_1(m)\right)_+\geqslant\frac{\xi}{2}\right)\nonumber\\
& & \hspace{0.5cm}+\Pg\left(2\alpha^{-1}\sup_{m\in\M}\left(\sigma^2\langle u_m,P_n\varepsilon\rangle_n^2-\bar{p}_2(m)\right)_+\geqslant\frac{\xi}{2}\right)\nonumber\\
& \leqslant & \sum_{m\in\M}\Pg\left(\sigma^2\Vert\pi_mP_n\varepsilon\Vert_n^2\geqslant \bar{p}_1(m)+\frac{\xi}{2(2-\alpha)}\right)\nonumber\\
& & \hspace{0.5cm} +\sum_{m\in\M}\Pg\left(\sigma^2\langle u_m,P_n\varepsilon\rangle_n^2\geqslant \bar{p}_2(m)+\frac{\alpha\xi}{4}\right)\nonumber\\
& \leqslant & \sum_{m\in\M}\Pg_{1,m}(\xi)+\sum_{m\in\M}\Pg_{2,m}(\xi)\ .\label{ineg04}
\end{eqnarray}

We first bound $\Pg_{1,m}(\xi)$. For $m\in\M$ such that $S_m=\{0\}$ (\textit{i.e.} $\pi_m=0$), $\bar{p}_1(m)\geqslant0$ leads obviously to $\Pg_{1,m}(\xi)=0$. Thus, it is sufficient to bound $\Pg_{1,m}(\xi)$ for $m$ such that $\pi_m$ is not null. This ensures that the symmetric nonnegative matrix $\tilde{A}=\tra{P_n}\pi_mP_n$ lies in $\Mn\setminus\{0\}$. Thus, under hypothesis (\ref{m_cond}), Corollary $5.1$ of \cite{Bar00} gives us, for any $x_m>0$,
$$\Pg\left(n\Vert\pi_mP_n\varepsilon\Vert_n^2\geqslant\Tr(\tilde{A})+2\sqrt{\rho(\tilde{A})\Tr(\tilde{A})x_m}+ \rho(\tilde{A})x_m\right)\leqslant \frac{C_1(p)\tau_p\Tr(\tilde{A})}{\rho(\tilde{A})x_m^{p/2}}$$
where $C_1(p)$ is a constant that only depends on $p$. The properties of the norm $\rho$ imply
\begin{equation}\label{NG_ineg02}
\rho(\tilde{A})=\rho(\tra{(\pi_mP_n)}(\pi_mP_n))=\rho(\pi_mP_n)^2\leqslant\rho^2(P_n)\ .
\end{equation}
By the inequalities (\ref{NG_ineg02}) and (\ref{inegCarres}) with $\theta/2>0$, we obtain
\begin{equation}\label{concChi2}
\Pg\left(n\Vert\pi_mP_n\varepsilon\Vert_n^2\geqslant\left(1+\frac{\theta}{2}\right)\Tr(\tilde{A})+\left(1+\frac{2}{\theta}\right)\rho^2(P_n)x_m\right)\leqslant \frac{C_1(p)\tau_p\Tr(\tilde{A})}{\rho(\tilde{A})x_m^{p/2}}\ .
\end{equation}
We take $\alpha=2/(\theta+2)\in(0,1)$ but for legibility we keep using the notation $\alpha$. Moreover, we choose
$$n\bar{p}_1(m)=\left(1+\frac{\theta}{2}+\frac{L_m}{2(\theta+1)}\right)\Tr(\tra{P_n}\pi_mP_n)\sigma^2$$
and
$$x_m=\frac{\theta}{2(\theta+1)(\theta+2)}\times\frac{L_m\Tr(\tra{P_n}\pi_mP_n)+n\xi/\sigma^2}{\rho^2(P_n)}\ .$$
Thus, Inequality (\ref{concChi2}) leads to
\begin{eqnarray}
\Pg_{1,m}(\xi) & = & \Pg\left(\sigma^2\Vert\pi_mP_n\varepsilon\Vert_n^2\geqslant \bar{p}_1(m)+\frac{\xi}{2(2-\alpha)}\right)\nonumber\\
& = & \Pg\left(\sigma^2\Vert\pi_mP_n\varepsilon\Vert_n^2\geqslant \bar{p}_1(m)+\frac{(\theta+2)\xi}{4(\theta+1)}\right)\nonumber\\
& \leqslant & \Pg\left(n\Vert\pi_mP_n\varepsilon\Vert_n^2\geqslant \left(1+\frac{\theta}{2}\right)\Tr(\tra{P_n}\pi_mP_n)+\left(1+\frac{2}{\theta}\right)\rho^2(P_n)x_m\right)\nonumber\\
& \leqslant & C_2(p,\theta)\frac{\Tr(\tra{P_n}\pi_mP_n)\tau_p}{\rho(\tra{P_n}\pi_mP_n)}\left(\frac{L_m\Tr(\tra{P_n}\pi_mP_n)+n\xi/\sigma^2}{\rho^2(P_n)}\right)^{-p/2}\ .\label{inegP1}
\end{eqnarray}

We now focus on $\Pg_{2,m}(\xi)$. Let $y_m$ be some positive real number, the Markov Inequality leads to
\begin{equation}\label{ineg03}\Pg\left(\left\vert\langle u_m,P_n\varepsilon\rangle_n\right\vert\geqslant y_m\right)\leqslant y_m^{-p}\E\left[\left\vert\langle u_m,P_n\varepsilon\rangle_n\right\vert^p\right]=y_m^{-p}\E\left[\left\vert\langle \tra{P_n}u_m,\varepsilon\rangle_n\right\vert^p\right]\ .
\end{equation}
Since $p>2$, the quantity $\tau_p$ is lower bounded by $1$,
\begin{equation}\label{minor_taup}
\tau_p=\E\left[\left\vert\varepsilon_1\right\vert^p\right]\geqslant\E\left[\varepsilon_1^2\right]^{p/2}=1\ .
\end{equation}
Moreover, we can apply the Rosenthal inequality (see Chapter 2 of \cite{Pet95}) to obtain
\begin{equation}\label{ineg02}\E\left[\left\vert\langle \tra{P_n}u_m,\varepsilon\rangle_n\right\vert^p\right]\leqslant C_3(p)n^{-p}\left(\tau_p\sum_{i=1}^n\left\vert(\tra{P_n}u_m)_i\right\vert^p+n^{p/2}\Vert \tra{P_n}u_m\Vert_n^p\right)
\end{equation}
where $C_3(p)$ is a constant that only depends on $p$. Since $p>2$, we have
$$\sum_{i=1}^n\left\vert(\tra{P_n}u_m)_i\right\vert^p\leqslant\left(\sum_{i=1}^n(\tra{P_n}u_m)_i^2\right)^{p/2}=n^{p/2}\Vert \tra{P_n}u_m\Vert_n^p\leqslant n^{p/2}\rho^p(P_n)\ .$$
Thus, the Inequality (\ref{ineg02}) becomes
$$\E\left[\left\vert\langle \tra{P_n}u_m,\varepsilon\rangle_n\right\vert^p\right]\leqslant 2C_3(p)\rho^p(P_n)\tau_pn^{-p/2}$$
and, putting this inequality in (\ref{ineg03}), we obtain
\begin{equation}\label{NG_ineg03}
\Pg\left(\left\vert\langle u_m,P_n\varepsilon\rangle_n\right\vert\geqslant y_m\right)\leqslant 2C_3(p)\rho^p(P_n)\tau_pn^{-p/2}y_m^{-p}\ .
\end{equation}
We take
$$n\bar{p}_2(m)=\frac{1}{2(\theta+1)}\sigma^2L_m\Tr(\tra{P_n}\pi_mP_n)$$
and
$$y_m^2=\frac{1}{2(\theta+2)n}\left(L_m\Tr(\tra{P_n}\pi_mP_n)+\frac{n\xi}{\sigma^2}\right)\ .$$
Finally, (\ref{NG_ineg03}) gives
\begin{eqnarray}
\Pg_{2,m}(\xi) & = & \Pg\left(\sigma^2\langle u_m,P_n\varepsilon\rangle_n^2\geqslant \bar{p}_2(m)+\frac{\alpha\xi}{4}\right)\nonumber\\
& = & \Pg\left(\sigma^2\langle u_m,P_n\varepsilon\rangle_n^2\geqslant \bar{p}_2(m)+\frac{\xi}{2(\theta+2)}\right)\nonumber\\
& \leqslant & \Pg\left(\langle u_m,P_n\varepsilon\rangle_n^2\geqslant y_m^2\right)\nonumber\\
& \leqslant & C_4(p,\theta)\tau_p\left(\frac{L_m\Tr(\tra{P_n}\pi_mP_n)+n\xi/\sigma^2}{\rho^2(P_n)}\right)^{-p/2}\ .\label{inegP2}
\end{eqnarray}
Taking
$$R'(\xi)=\sum_{m\in\M:S_m\neq\{0\}}\left(1+\frac{\Tr(\tra{P_n}\pi_mP_n)}{\rho(\tra{P_n}\pi_mP_n)}\right)\left(\frac{L_m\Tr(\tra{P_n}\pi_mP_n)+n\xi/\sigma^2}{\rho^2(P_n)}\right)^{-p/2}$$
and putting together Inequalities (\ref{ineg04}), (\ref{inegP1}) and (\ref{inegP2}) lead us to
\begin{eqnarray*}
\Pg\left((1-\alpha)\mathcal{H}_+\geqslant\xi\right) & \leqslant & \sum_{m\in\M}\Pg_{1,m}(\xi)+\sum_{m\in\M}\Pg_{2,m}(\xi)\\
& \leqslant & \sum_{m\in\M:S_m=\{0\}}\Pg_{2,m}(\xi)+\sum_{m\in\M:S_m\neq\{0\}}\Pg_{1,m}(\xi)+\sum_{m\in\M:S_m\neq\{0\}}\Pg_{2,m}(\xi)\\
& \leqslant & \sum_{m\in\M:S_m=\{0\}}1\wedge\left\{C_4(p,\theta)\tau_p\left(\frac{n\xi}{\sigma^2\rho^2(P_n)}\right)^{-p/2}\right\}+C_5(p,\theta)\tau_pR'(\xi)\\
& \leqslant & N_0(1\vee C_4(p\theta))\tau_p\left(1\wedge\left(\frac{n\xi}{\rho^2(P_n)\sigma^2}\right)^{-p/2}\right)+C_5(p,\theta)\tau_pR'(\xi)\ .
\end{eqnarray*}
For $z>0$, take $\xi=\rho^2(P_n)\sigma^2z/n$ to obtain (\ref{NG_gen_result}). We conclude the proof by computing the lowerbound (\ref{NG_minor_pen}) on the penalty function,
\begin{eqnarray*}
(2-\alpha)\bar{p}_1(m)+\alpha^{-1}\bar{p}_2(m) & = & \frac{2(\theta+1)}{\theta+2}\bar{p}_1(m)+\frac{\theta+2}{2}\bar{p}_2(m)\\
& = & \left(1+\theta+\frac{\theta^2+8\theta+8}{4(\theta+1)(\theta+2)}L_m\right)\frac{\Tr(\tra{P_n}\pi_mP_n)}{n}\sigma^2\ .\\
\end{eqnarray*}
Since $(\theta^2+8\theta+8)/(4(\theta+1)(\theta+2))\leqslant1$, the penalty given by (\ref{NG_th1_pen}) satisfies the condition (\ref{NG_minor_pen}).

\subsection{Proofs of Theorems \ref{GUV_main_theo} and \ref{NG_UV_th}}

\subsubsection{Proof of Theorem \ref{GUV_main_theo}}

Given $\theta>0$, we can find two positive numbers $\delta=\delta(\theta)<1/2$ and $\eta=\eta(\theta)$ such that $(1+\theta)(1-2\delta)\geqslant(1+2\eta)$. Thus we define
$$\Omega_n=\left\{\hat{\sigma}^2>(1-2\delta)\sigma^2\right\}\ .$$
On $\Omega_n$, we know that
$$\forall m\in\M,\ \pen(m)\geqslant(1+2\eta)\frac{Tr(\tra{P_n}\pi_mP_n)}{n}\sigma^2\ .$$
Taking care of the random nature of the penalty, we argue as in the proof of Theorem \ref{G_main_theo} with $L_m=\eta$ to get
\begin{equation}\label{ineg17}
\E\left[\Vert s-\tilde{s}\Vert_n^2\ind_{\Omega_n}\right] \leqslant \frac{\eta+1}{\eta}\inf_{m\in\M}\left\{\Vert s-s_m\Vert_n^2+\E[\pen(m)]-\frac{\Tr(\tra{P_n}\pi_mP_n)}{n}\sigma^2\right\}+\frac{\rho^2(P_n)\sigma^2}{n}R''_{P_n,\eta}(\mathcal{F})
\end{equation}
where $R''_{P_n,\eta}(\mathcal{F})$ is defined by
$$R''_{P_n,\eta}(\mathcal{F})=C_{\eta}\sum_{m\in\M}\exp\left(-\frac{C'_{\eta}\Tr(\tra{P_n}\pi_mP_n)}{\rho^2(P_n)}\right)\ .$$
We use Lemma \ref{G_UV_lem_bias} and (\ref{GUV_hyp_dim}) to get an upperbound for $\E[\pen(m)]$,
\begin{eqnarray*}
\E[\pen(m)] & \leqslant & (1+\theta)\frac{\Tr(\tra{P_n}\pi_mP_n)}{n}\sigma^2+(1+\theta)\frac{\Tr(\tra{P_n}\pi_mP_n)\Vert s-\pi s\Vert_n^2}{\Tr\left(\tra{P_n}(I_n-\pi)P_n\right)}\\
& \leqslant & (1+\theta)\frac{\Tr(\tra{P_n}\pi_mP_n)}{n}\sigma^2+(1+\theta)\frac{\Tr(\tra{P_n}P_n)\Vert s-\pi s\Vert_n^2}{\Tr\left(\tra{P_n}(I_n-\pi)P_n\right)}\\
& \leqslant & (1+\theta)\frac{\Tr(\tra{P_n}\pi_mP_n)}{n}\sigma^2+2(1+\theta)\Vert s-\pi s\Vert_n^2\ .
\end{eqnarray*}
The Proposition \ref{prop_risk} and (\ref{ineg17}) give
\begin{equation}\label{GUV_ineg_omega}
\E\left[\Vert s-\tilde{s}\Vert_n^2\ind_{\Omega_n}\right] \leqslant C(\theta)\inf_{m\in\M}\E\left[\Vert s-\hat{s}_m\Vert_n^2\right]+2(\theta+1)\Vert s-\pi s\Vert_n^2+\frac{\rho^2(P_n)\sigma^2}{n}R''_{P_n,\eta}(\mathcal{F})
\end{equation}
where $C(\theta)>1$.

We now bound $\E[\Vert s-\tilde{s}\Vert_n^2\ind_{\Omega_n^c}]$. Note that
\begin{equation*}
\Vert s-\tilde{s}\Vert_n^2 = \Vert s-s_{\hat{m}}\Vert_n^2+\sigma^2\Vert\pi_{\hat{m}}P_n\varepsilon\Vert_n^2 \leqslant \Vert s\Vert_n^2+\sigma^2\Vert P_n\varepsilon\Vert_n^2
\end{equation*}
and thus, by the Cauchy–Schwarz Inequality,
\begin{equation*}
\E[\Vert s-\tilde{s}\Vert_n^2\ind_{\Omega_n^c}] \leqslant \Vert s\Vert_n^2\Pg\left(\Omega_n^c\right)+\sigma^2\E[\Vert P_n\varepsilon\Vert_n^2\ind_{\Omega_n^c}] \leqslant \left(\Vert s\Vert_n^2+\sigma^2\E[\Vert P_n\varepsilon\Vert_n^4]^{1/2}\right)\Pg\left(\Omega_n^c\right)^{1/2}\ .
\end{equation*}
Moreover, the eigenvalues of the matrix $P_n\tra{P_n}$ are nonnegative and so
\begin{eqnarray*}
\E[\Vert P_n\varepsilon\Vert_n^4]^{1/2} & = & \left(\var(\Vert P_n\varepsilon\Vert_n^2)+E[\Vert P_n\varepsilon\Vert_n^2]^2\right)^{1/2}\\
& \leqslant & \frac{1}{n}\sqrt{\Tr(\tra{P_n}P_n)\left(\Tr(\tra{P_n}P_n)+2\rho^2(P_n)\right)}\\
& \leqslant & \frac{\Tr(\tra{P_n}P_n)+(\Tr(\tra{P_n}P_n)+2\rho^2(P_n))}{2n}\\
& \leqslant & \frac{\Tr(\tra{P_n}P_n)+\rho^2(P_n)}{n}\ .
\end{eqnarray*}
Finally, the Lemma \ref{G_UV_lem_conc} gives
\begin{eqnarray}
\E[\Vert s-\tilde{s}\Vert_n^2\ind_{\Omega_n^c}] & \leqslant & C'(\theta)\left(\Vert s\Vert_n^2+\frac{\Tr(\tra{P_n}P_n)+\rho^2(P_n)}{n}\sigma^2\right)\exp\left(-\frac{\theta^2\Tr(\tra{P_n}P_n)}{32\rho^2(P_n)}\right)\nonumber\\
& \leqslant & C'(\theta)\left(\Vert s\Vert_n^2+\frac{\rho^2(P_n)(n+1)}{n}\sigma^2\right)\exp\left(-\frac{\theta^2\Tr(\tra{P_n}P_n)}{32\rho^2(P_n)}\right)\nonumber\\
& \leqslant & C'(\theta)\left(\Vert s\Vert_n^2+2\rho^2(P_n)\sigma^2\right)\exp\left(-\frac{\theta^2\Tr(\tra{P_n}P_n)}{32\rho^2(P_n)}\right)\label{GUV_ineg_omega_comp}
\end{eqnarray}
where $C'(\theta)>1$. The inequality (\ref{GUV_main_theo_ineg}) follows by collecting (\ref{GUV_ineg_omega}) and (\ref{GUV_ineg_omega_comp}).

\subsubsection{Proof of Theorem \ref{NG_UV_th}}

Given $\theta>0$, we can find two positive numbers $\delta=\delta(\theta)<1/3$ and $\eta=\eta(\theta)$ such that $(1+\theta)(1-3\delta)\geqslant(1+2\eta)$. Thus we define
$$\Omega'_n=\left\{\hat{\sigma}^2>(1-3\delta)\sigma^2\right\}\ .$$
On $\Omega'_n$, we know that
$$\forall m\in\M,\ \pen(m)\geqslant(1+2\eta)\frac{Tr(\tra{P_n}\pi_mP_n)}{n}\sigma^2\ .$$
Let $\bar{m}$ be any element of $\M$ that minimize $\Vert s-s_{m'}\Vert_n^2+\sigma^2\Tr(\tra{P_n}\pi_{m'}P_n)/n$ among $m'\in\M$. Taking care of the random nature of the penalty, we argue as in the proof of Theorem \ref{NG_th1} with $L_m=\eta$ to get
\begin{equation*}
\E\left[\Vert s-\tilde{s}\Vert_n^{2q}\ind_{\Omega'_n}\right]^{1/q} \leqslant C(q,\theta)\E\left[\left(\Vert s-s_{\bar{m}}\Vert_n^2+\frac{\Tr(\tra{P_n}\pi_{\bar{m}}P_n)}{n}\hat{\sigma}^2\right)^q\right]^{1/q}+\frac{\rho^2(P_n)\sigma^2}{n}R_n(p,q,\theta)^{1/q}
\end{equation*}
where $R_n(p,q,\theta)$ is equal to
$$C'(p,q,\theta)\tau_p\left[N_0+\sum_{m\in\M:S\neq\{0\}}\left(1+\frac{\Tr(\tra{P_n}\pi_mP_n)}{\rho(\tra{P_n}\pi_mP_n)}\right)\left(\frac{\Tr(\tra{P_n}\pi_mP_n)}{\rho^2(P_n)}\right)^{q-p/2}\right]\ .$$
Since $q\leqslant 1$, by a convexity argument and Jensen's inequality we deduce
\begin{equation}\label{NG_ineg08}
\E\left[\Vert s-\tilde{s}\Vert_n^{2q}\ind_{\Omega'_n}\right]^{1/q} \leqslant C(q,\theta)\left(\Vert s-s_{\bar{m}}\Vert_n^2+\frac{\Tr(\tra{P_n}\pi_{\bar{m}}P_n)}{n}\E[\hat{\sigma}^2]\right)+\frac{\rho^2(P_n)\sigma^2}{n}R_n(p,q,\theta)^{1/q}\ .
\end{equation}
Lemma \ref{G_UV_lem_bias} and (\ref{GUV_hyp_dim}) give
\begin{eqnarray*}
\frac{\Tr(\tra{P_n}\pi_{\bar{m}}P_n)}{n}\E[\hat{\sigma}^2] & = & \frac{\Tr(\tra{P_n}\pi_{\bar{m}}P_n)}{n}\sigma^2+\frac{n\Tr(\tra{P_n}\pi_{\bar{m}}P_n)\Vert s-\pi s\Vert_n^2}{n\Tr(\tra{P_n}(I_n-\pi)P_n)}\\
& \leqslant & \frac{\Tr(\tra{P_n}\pi_{\bar{m}}P_n)}{n}\sigma^2+2\Vert s-\pi s\Vert_n^2\ .
\end{eqnarray*}
Thus, by the definition of $\bar{m}$ and Proposition \ref{prop_risk}, (\ref{NG_ineg08}) becomes
\begin{equation}\label{major_omega_prime}
\E\left[\Vert s-\tilde{s}\Vert_n^{2q}\ind_{\Omega'_n}\right]^{1/q} \leqslant C(q,\theta)\left(\inf_{m\in\M}\E[\Vert s-\hat{s}_m\Vert_n^2]+2\Vert s-\pi s\Vert_n^2\right)+\frac{\rho^2(P_n)\sigma^2}{n}R_n(p,q,\theta)^{1/q}\ .
\end{equation}

We now bound $\E[\Vert s-\tilde{s}\Vert_n^{2q}\ind_{{\Omega'}_n^c}]$. Note that
\begin{equation*}
\Vert s-\tilde{s}\Vert_n^2 = \Vert s-s_{\hat{m}}\Vert_n^2+\sigma^2\Vert\pi_{\hat{m}}P_n\varepsilon\Vert_n^2 \leqslant \Vert s\Vert_n^2+\sigma^2\Vert P_n\varepsilon\Vert_n^2\ .
\end{equation*}
Since $q\leqslant 1$, we have
$$\E[\Vert s-\tilde{s}\Vert_n^{2q}\ind_{{\Omega'}_n^c}]\leqslant \Vert s\Vert_n^{2q}\Pg({\Omega'}^c_n)+\sigma^{2q}\E[\Vert P_n\varepsilon\Vert_n^{2q}\ind_{{\Omega'}_n^c}]\ .$$
H\"older's Inequality with exponent $p/2q>1$ gives
$$\E[\Vert P_n\varepsilon\Vert_n^{2q}\ind_{{\Omega'}_n^c}]\leqslant\E[\Vert P_n\varepsilon\Vert_n^p]^{2q/p}\Pg({\Omega'}^c_n)^{1-2q/p}$$
and, since
$$\E[\Vert P_n\varepsilon\Vert_n^p]^{2q/p}\leqslant\rho^{2q}(P_n)\E[\Vert \varepsilon\Vert_n^p]^{2q/p}\leqslant\rho^{2q}(P_n)\tau_p^{2q/p}\ ,$$
we obtain by using Lemma \ref{NG_UV_lem_conc} that
\begin{eqnarray*}
\E[\Vert s-\tilde{s}\Vert_n^{2q}\ind_{{\Omega'}_n^c}] & \leqslant & (\Vert s\Vert_n^{2q}+\sigma^{2q}\rho^{2q}(P_n)\tau_p^{2q/p})\Pg({\Omega'}^c_n)^{1-2q/p}\\
& \leqslant & C(p,q,\theta)\kappa'_n(p,q,\theta)(\Vert s\Vert_n^{2q}+\sigma^{2q}\rho^{2q}(P_n)\tau_p^{2q/p})\left(\tau_p\rho^{\alpha_p}(P_n)\Tr(\tra{P_n}P_n)^{-\beta_p}\right)^{1-2q/p}
\end{eqnarray*}
where
$$\alpha_p=(p/2-1)\vee1\text{ and }\beta_p=(p/2-1)\wedge1\ .$$
Thus, we get
\begin{equation}\label{major_omega_primec}
\E[\Vert s-\tilde{s}\Vert_n^{2q}\ind_{{\Omega'}_n^c}]^{1/q}\leqslant C'(p,q,\theta)\kappa_n(p,q,\theta)\tau_p^{1/q}(\Vert s\Vert_n^2+\tau_p\rho^2(P_n)\sigma^2)\left(\frac{\rho^{2\alpha_p}(P_n)}{\Tr(\tra{P_n}P_n)^{\beta_p}}\right)^{1/q-2/p}\ .
\end{equation}
The announced result follows from (\ref{major_omega_prime}) and (\ref{major_omega_primec}).

\subsection{Proofs of Corollaries and Propositions}

\subsubsection{Proof of Corollary \ref{G_gen_coro1}}

Let us begin by applying Theorem \ref{G_main_theo} with constant weights $L_m=L$,
\begin{equation}\label{cor_ineg1}
\E\left[\Vert s-\tilde{s}\Vert_n^2\right] \leqslant (1+\theta^{-1})\inf_{m\in\M}\left\{\Vert s-s_m\Vert_n^2+(\theta+L)\frac{\Tr(\tra{P_n}\pi_mP_n)}{n}\sigma^2\right\}+\frac{\rho^2(P_n)\sigma^2}{n}R_n(\theta)\ .
\end{equation}
We now upperbound the remainder term. Assumption {\bf ($\text{A}_3'$)} and bounds on $N_d$ and $L$ lead to
\begin{eqnarray*}
R_n(\theta) & \leqslant & \frac{2(1+\theta)^4}{\theta^3}\sum_{m\in\M}\exp\left(-\frac{\theta^2L}{2(1+\theta)^3}\times\frac{\Tr(\tra{P_n}\pi_mP_n)}{\rho^2(P_n)}\right)\\
& \leqslant & \frac{2(1+\theta)^4}{\theta^3}\sum_{m\in\M}\exp\left(-\frac{c\theta^2L}{2(1+\theta)^3}\dim(S_m)\right)\\
& \leqslant & \frac{2(1+\theta)^4}{\theta^3}\sum_{d\in\N}N_de^{-(A+\omega)d}\\
& \leqslant & \frac{2(1+\theta)^4}{\theta^3}\sum_{d\in\N}e^{-\omega d}\ .
\end{eqnarray*}
The last bound is clearly finite and we denote it by $R=R(\theta,\omega)$. Thus, we derive from (\ref{cor_ineg1})
$$\frac{\theta}{\theta+1}\E\left[\Vert s-\tilde{s}\Vert_n^2\right]\leqslant\inf_{m\in\M}\left\{\Vert s-s_m\Vert_n^2+\left((\theta+L)\Tr(\tra{P_n}\pi_mP_n)+R\rho^2(P_n)(\dim(S_m)\vee1)\right)\frac{\sigma^2}{n}\right\}$$
and hypothesis {\bf ($\text{A}_3'$)} gives
\begin{equation*}
\frac{\theta}{\theta+1}\E\left[\Vert s-\tilde{s}\Vert_n^2\right]\leqslant\inf_{m\in\M}\left\{\Vert s-s_m\Vert_n^2+(\theta+L+R/c)\left(\Tr(\tra{P_n}\pi_mP_n)\vee c\rho^2(P_n)\right)\frac{\sigma^2}{n}\right\}
\end{equation*}
that concludes the proof.

\subsubsection{Proof of Corollary \ref{NG_gen_coro1}}

Since $p>6$, we can take $q=1$ and apply Theorem \ref{NG_th1} with constant weights $L_m=L$ to get
\begin{equation}\label{cor_ineg2}
\E\left[\Vert s-\tilde{s}\Vert_n^2\right] \leqslant C\inf_{m\in\M}\left\{\Vert s-s_m\Vert_n^2+(1+\theta+L)\frac{\Tr(\tra{P_n}\pi_mP_n)}{n}\sigma^2\right\}+\frac{\rho^2(P_n)\sigma^2}{n}R_n(p,1,\theta)\ .
\end{equation}
To upperbound the remainder term, we use Assumption {\bf ($\text{A}_3'$)} and bounds on $N_d$ and $L$ to get
\begin{eqnarray*}
R_n(p,1,\theta) & \leqslant & C'\tau_p\left[1+\sum_{m\in\M:S_m\neq\{0\}}\left(1+\frac{\Tr(\tra{P_n}\pi_mP_n)}{\rho(\tra{P_n}\pi_mP_n)}\right)\left(\frac{L\Tr(\tra{P_n}\pi_mP_n)}{\rho^2(P_n)}\right)^{1-p/2}\right]\\
& \leqslant & C'\tau_p\left[1+\sum_{m\in\M:S_m\neq\{0\}}(1+\dim(S_m))(Lc\dim(S_m))^{1-p/2}\right]\\
& \leqslant & C'\tau_p\left[1+\frac{(c\omega)'^{1-p/2}}{A}\sum_{d>0}N_d(1+d)d^{1-p/2}\right]\\
& \leqslant & C'\tau_p\left[1+(c\omega)'^{1-p/2}\sum_{d>0}(1+d)^{p/2-2-\omega}d^{1-p/2}\right]\ .
\end{eqnarray*}
The last bound is clearly finite and we denote it by $R\tau_p=R(\theta,p,\omega,\omega',c)\tau_p$. Thus, as we did in the previous proof, we derive from (\ref{cor_ineg2}) and {\bf ($\text{A}_3'$)}
$$\frac{1}{C''}\E\left[\Vert s-\tilde{s}\Vert_n^2\right]\leqslant\inf_{m\in\M}\left\{\Vert s-s_m\Vert_n^2+(1+\theta+L+R\tau_p/c)\left(\Tr(\tra{P_n}\pi_mP_n)\vee c\rho^2(P_n)\right)\frac{\sigma^2}{n}\right\}\ .$$
Since $\tau_p\geqslant1$, the announced result follows.

\subsubsection{Proof of Proposition \ref{prop_rho}}

The design points $(x_i,y^1_i,\dots,y^K_i)$ are all assumed to be independent realizations of a random variable in $[0,1]^{K+1}$ with distribution $\nu\otimes\nu_1\otimes\dots\otimes\nu_K$. We denote by $I_k$ the unit $k\times k$ matrix and, for any $a=(a_1,\dots,a_k)'\in\R^k$, we define the usual norm
$$\vert a\vert_2=\left(\sum_{i=1}^ka_i^2\right)^{1/2}\ .$$
We also consider $\delta_n=\dim(F)\leqslant D^{(1)}_n+\dots+D^{(K)}_n+1$ and $N_n=n-D_n-\delta_n$. The quantities $\delta_n$ and $N_n$ are random and only depend on the $y^j_i$'s and not on the $x_i$'s.

The space $E$ is generated by the vectors $e^{(i)}=(\phi_i(x_1),\dots,\phi_i(x_n))'$, for $i=1,\dots,D_n$. Let $\{f^{(1)},\dots,f^{(\delta_n)}\}$ be an orthonormal basis of $F$ and $\{g^{(1)},\dots,g^{(N_n)}\}$ be an orthonormal basis of $G=(E+F)^{\perp}$. In the basis $\mathbf{b}$ of $\R^n$ given by the $e^{(i)}$'s, the $f^{(i)}$'s and the $g^{(i)}$'s, the projection $P_n$ onto $E$ along $F+G$ can be expressed as
$$M=\left[
\begin{array}{cc}
I_{D_n} & 0\\
0 & 0
\end{array}
\right]\in\Mn(\R)\ .$$
Considering the matrix $C$ that transforms $\mathbf{b}$ into the canonical basis, we can decompose $P_n=CMC^{-1}$. By the properties of the norm $\rho$, we get
$$\rho^2(P_n)\leqslant\rho^2(C)\rho^2(M)\rho^2(C^{-1})=\left(\frac{1}{n}\rho(\tra{C}C)\right)\left(n\rho(\tra{C^{-1}}C^{-1})\right)\ .$$
For any $\rho>1$, we deduce from the previous inequality that
\begin{equation}\label{bound_rho_1}
\Pg(\rho(P_n)>\rho)\leqslant\Pg\left(\rho\left(\frac{\tra{C}C}{n}\right)>\rho\right)+\Pg\left(\rho(n\tra{C^{-1}}C^{-1})>\rho\right)\ .
\end{equation}
Note that for any invertible matrix $A\in\Mn(\R)$ and $\lambda>1$, if $\rho(A-I_n)<1-\lambda^{-1}$, then $\rho(A^{-1})<\lambda$. Thus, Inequality (\ref{bound_rho_1}) leads to
\begin{eqnarray}
\Pg(\rho(P_n)>\rho) & \leqslant & \Pg\left(\rho\left(\frac{\tra{C}C}{n}\right)>\rho\right)+\Pg\left(\rho\left(\frac{\tra{C}C}{n}-I_n\right)>1-\rho^{-1}\right)\nonumber\\
& \leqslant & 2\Pg\left(\rho\left(\frac{\tra{C}C}{n}-I_n\right)>1-\rho^{-1}\right)\ .\label{bound_rho_2}
\end{eqnarray}
Let us denote by $\Phi$ the $D_n\times D_n$ Gram matrix associated to the vectors $e^{(1)},\dots,e^{(D_n)}$. If we define the $D_n\times\delta_n$ matrix $\Omega$ by
$$\forall 1\leqslant i\leqslant D_n,\ \forall 1\leqslant j\leqslant\delta_n,\ \Omega_{ij}=\langle e^{(i)},f^{(j)}\rangle_n\ ,$$
then we can write the following decomposition by blocks,
$$\frac{1}{n}\tra{C}C=\left[
\begin{array}{ccc}
\Phi & \Omega & 0\\
\tra{\Omega} & I_{\delta_n} & 0\\
0 & 0 & I_{N_n}
\end{array}
\right]\in\Mn(\R)\ .$$
Consequently, by the definition of $\rho(\cdot)$, we obtain
\begin{equation}\label{bound_rho_3}
\rho\left(\frac{\tra{C}C}{n}-I_n\right)\leqslant\rho(\Phi-I_{D_n})+\rho(\Omega')
\end{equation}
where we have set
$$\Omega'=\left[
\begin{array}{cc}
0 & \Omega\\
\tra{\Omega} & 0
\end{array}\right]\ .$$
Using (\ref{bound_rho_3}) in (\ref{bound_rho_2}) leads to
\begin{equation}\label{ineqa_sum}
\Pg(\rho(P_n)>\rho)\leqslant2\Pg\left(\rho(\Phi-I_{D_n})>\frac{1-\rho^{-1}}{2}\right)+2\Pg\left(\rho(\Omega')>\frac{1-\rho^{-1}}{2}\right)=2\Pg_1+2\Pg_2\ .
\end{equation}

First, we upperbound $\Pg_1$. Let $x>0$, we consider the event
$$E_x=\left\{\forall1\leqslant i,j\leqslant D_n,\ \left\vert\langle e^{(i)},e^{(j)}\rangle_n-\int_0^1\phi_i(u)\phi_j(u)\nu(du)\right\vert\leqslant V_{ij}(\phi)\sqrt{2x}+B_{ij}(\phi)x\right\}\ .$$
Because $\Phi-I_{D_n}$ is symmetric, we know that, on the event $E_x$,
\begin{eqnarray*}
\rho(\Phi-I_{D_n}) & = & \sup_{a\in\R^{D_n},\ \vert a\vert_2\leqslant1}\vert\tra{a}(\Phi-I_{D_n})a\vert\\
& = & \sup_{a\in\R^{D_n},\ \vert a\vert_2\leqslant1}\left\vert\sum_{i=1}^{D_n}\sum_{j=1}^{D_n}a_ia_j\left(\langle e^{(i)},e^{(j)}\rangle_n-\int_0^1\phi_i(u)\phi_j(u)\nu(du)\right)\right\vert1\\
& \leqslant & \sup_{a\in\R^{D_n},\ \vert a\vert_2\leqslant1}\sum_{i=1}^{D_n}\sum_{j=1}^{D_n}\vert a_ia_j\vert\left(\vert V_{ij}(\phi)\vert\sqrt{2x}+\vert B_{ij}(\phi)\vert x\right)\\
& \leqslant & \sqrt{2xL_{\phi}}+xL_{\phi}\ .
\end{eqnarray*}
Thus, for any $x>0$ such that
\begin{equation}\label{cond_sur_x}
\sqrt{2xL_{\phi}}+xL_{\phi}\leqslant\frac{1-\rho^{-1}}{2}
\end{equation}
we deduce
\begin{eqnarray}
\Pg_1 & \leqslant & \Pg\left(\exists (i,j)\ :\ \left\vert\langle e^{(i)},e^{(j)}\rangle_n-\int_0^1\phi_i(u)\phi_j(u)\nu(du)\right\vert>V_{ij}(\phi)\sqrt{2x}+B_{ij}(\phi)x\right)\nonumber \\
& \leqslant & \sum_{i=1}^{D_n}\sum_{j=1}^{D_n}\Pg\left(\left\vert\langle e^{(i)},e^{(j)}\rangle_n-\int_0^1\phi_i(u)\phi_j(u)\nu(du)\right\vert>V_{ij}(\phi)\sqrt{2x}+B_{ij}(\phi)x\right)\ .\label{somme_majoration_p1}
\end{eqnarray}
The choice $x=(1-\rho^{-1})^2/(12L(\phi))$ satisfies (\ref{cond_sur_x}) and we apply Bernstein Inequality (see Lemma 8 of \cite{BirMas98}) to the terms of the sum in (\ref{somme_majoration_p1}) to obtain
\begin{equation}\label{major_p1}
\Pg_1\leqslant 2D_n^2\exp\left(-\frac{n(1-\rho^{-1})^2}{12L_{\phi}}\right)\ .
\end{equation}

It remains to upperbound the probability $\Pg_2$. Let $x>0$, we consider the event
$$E_x'=\left\{\forall 1\leqslant i\leqslant D_n,\ \forall 1\leqslant j\leqslant\delta_n,\ \left\vert\langle e^{(i)},f^{(j)}\rangle_n\right\vert\leqslant\sqrt{2x}+b_{\phi}\sqrt{n}x\right\}\ .$$
By definition of the norm $\rho(\cdot)$, we know that, on the event $E_x'$,
\begin{eqnarray*}
\rho(\Omega') & = & 2\sup_{\substack{a\in\R^{D_n},\ b\in\R^{\delta_n}\\\vert a\vert_2+\vert b\vert_2\leqslant1}}\left\vert \tra{a}\Omega b\right\vert\\
& \leqslant & 2\sup_{\substack{a\in\R^{D_n},\ b\in\R^{\delta_n}\\\vert a\vert_2\leqslant 1,\ \vert b\vert_2\leqslant1}}\left\vert\sum_{i=1}^{D_n}\sum_{j=1}^{\delta_n}a_ib_j\langle e^{(i)},f^{(j)}\rangle_n\right\vert\\
& \leqslant & 2\sup_{\substack{a\in\R^{D_n},\ b\in\R^{\delta_n}\\\vert a\vert_2\leqslant 1,\ \vert b\vert_2\leqslant1}}\sum_{i=1}^{D_n}\sum_{j=1}^{\delta_n}\vert a_ib_j\vert\left\vert\langle e^{(i)},f^{(j)}\rangle_n\right\vert\\
& \leqslant & 2\sqrt{D_n\delta_n}\left(\sqrt{2x}+b_{\phi}\sqrt{n}x\right)\ .
\end{eqnarray*}
Thus, for any $x>0$ such that
\begin{equation}\label{cond_sur_x_2}
2\sqrt{D_n\delta_n}\left(\sqrt{2x}+b_{\phi}\sqrt{n}x\right)\leqslant\frac{1-\rho^{-1}}{2}\ ,
\end{equation}
we apply Bernstein Inequality conditionally to the $y_i^j$'s to deduce
\begin{eqnarray}
\Pg_y\left(\rho(\Omega')>\frac{1-\rho^{-1}}{2}\right) & \leqslant & \Pg_y\left(\exists (i,j)\ :\ \left\vert\langle e^{(i)},f^{(j)}\rangle_n\right\vert>\sqrt{2x}+b_{\phi}\sqrt{n}x\right)\nonumber\\
& \leqslant & \sum_{i=1}^{D_n}\sum_{j=1}^{\delta_n}\Pg_y\left(\left\vert\langle e^{(i)},f^{(j)}\rangle_n\right\vert>\sqrt{2x}+b_{\phi}\sqrt{n}x\right)\nonumber\\
& \leqslant & 2D_n\delta_ne^{-nx}\leqslant2D_nD_n'e^{-nx}\label{ineqa_trans}
\end{eqnarray}
where $\Pg_y$ is the conditional probability given the $y_i^j$'s. Indeed, under $\Pg_y$ and (\ref{condi_phi}), the variables $\langle e^{(i)},f^{(j)}\rangle_n$ are centered with unit variance. The choice
$$x=\frac{(1-\rho^{-1})^2}{16\max\left\{4D_n\delta_n,b_{\phi}\sqrt{nD_n\delta_n}\right\}}$$
satisfies (\ref{cond_sur_x_2}) and (\ref{ineqa_trans}) leads to
\begin{eqnarray}
\Pg_2 & = & \E\left[\Pg_y\left(\rho(\Omega')>\frac{1-\rho^{-1}}{2}\right)\right]\nonumber\\
& \leqslant & 2D_nD_n'\E\left[\exp\left(-\frac{n(1-\rho^{-1})^2}{16\max\left\{4D_n\delta_n,b_{\phi}\sqrt{nD_n\delta_n}\right\}}\right)\right]\nonumber\\
& \leqslant & 2D_nD_n'\exp\left(-\frac{n(1-\rho^{-1})^2}{16\max\left\{4D_nD'_n,b_{\phi}\sqrt{nD_nD'_n}\right\}}\right)\ .\label{major_p2}
\end{eqnarray}
The announced result follows from (\ref{ineqa_sum}), (\ref{major_p1}) and (\ref{major_p2}).

\subsubsection{Proof of Proposition \ref{a_prop_rate}}

The collection $\mathcal{F}^{BM}$ is nested and, for any $d\in\N$, the quantity $N_d$ is bounded independently from $d$. Consequently, Condition (\ref{a_expcondi}) is satisfied in the Gaussian case and (\ref{a_polcondi}) is fulfilled under moment condition. In both cases, we are free to take $L=\theta=\eta/2$ and {\bf ($\text{A}_1$)} is true for $K=\eta$. Assumption {\bf ($\text{A}_3'$)} is fulfilled with $c=1/\rho^2$ and, since $\dim(S_m)>0$ for any $m\in\M$, we can apply Corollary \ref{G_gen_coro1} or \ref{NG_gen_coro1} according to whether {\bf ($\text{H}_{\text{Gau}}$)} or {\bf ($\text{H}_{\text{Mom}}$)} holds. Moreover, we denote by $\E_{\varepsilon}$ (\textit{resp.} $\E_d$) the expectation on $\varepsilon$ (\textit{resp.} the design points). So $\E_{\varepsilon,d}~[\cdot]=\E_{\varepsilon}~[\E_d[\cdot]]$.

We argue in the same way than in Section \ref{sect_estim_comp} and we use {\bf ($\text{A}_3$)} to get
\begin{eqnarray*}
\E_{\varepsilon,d}\left[\Vert s-\tilde{s}\Vert_n^2\right] & \leqslant & C\inf_{m\in\M}\left\{\E_d\left[\Vert s-s_m\Vert_n^2+\frac{\Tr(\tra{P_n}\pi_mP_n)}{n}\sigma^2\right]\right\}+C'(1+\rho)^2\left(\E_d[\Vert t-\pi_{F+G}t\Vert_n^2]+\frac{R}{n}\sigma^2\right)\\
& \leqslant & C\inf_{m\in\M}\left\{\E_d[\Vert s-s_m\Vert_n^2]+\frac{\dim(S_m)}{n}\rho^2\sigma^2\right\}+C'(1+\rho)^2\left(\E_d[\Vert t-\pi_{F+G}t\Vert_n^2]+\frac{R}{n}\sigma^2\right)\ .
\end{eqnarray*}
The definition of the norm $\Vert\cdot\Vert_n$ implies that, for any $f\in \Ltwo([0,1],\nu)$,
$$\E_d\left[\frac{1}{n}\sum_{i=1}^nf(x_i)^2\right]=\int_0^1f(x)^2\nu(dx)\ .$$
Since $s\in\mathcal{H}_{\alpha}(R)$, it is easy to see that this function lies in a Besov ball. Thus, we can apply Theorem 1 of \cite{BirMas00} and we get, for any $m\in\mathcal{M}$,
$$\E_d[\Vert s-s_m\Vert_n^2]\leqslant C(\alpha,R)\dim(S_m)^{-2\alpha}\ .$$
Arguing in the same way for the $t_j\in\Ltwo([0,1],\nu_j)$ and, since $F\perp G$, we obtain
\begin{eqnarray*}
\E_d[\Vert t-\pi_{F+G}t\Vert_n^2] & \leqslant & C(K)\sum_{j=1}^K\E_d[\Vert t^j-\pi_{F+G}t^j\Vert_n^2]\\
& \leqslant & C(K)\sum_{j=1}^K\E_d[\Vert t^j\Vert_n^2-\Vert\pi_Ft^j\Vert_n^2-\Vert\pi_Gt^j\Vert_n^2]\\
& \leqslant & C(K)\sum_{j=1}^K\E_d[\Vert t^j-\pi_Ft^j\Vert_n^2-\Vert t^j-\pi_{E+F}t^j\Vert_n^2]\\
& \leqslant & C(K)\sum_{j=1}^K\E_d[\Vert t^j-\pi_Ft^j\Vert_n^2]\\
& \leqslant & C(\alpha,R,K)D_n^{-2\alpha}\leqslant C(\alpha,R,K)\dim(S_m)^{-2\alpha}\ .
\end{eqnarray*}

Consequently, for any $m\in\mathcal{M}$, we obtain
$$E_{\varepsilon,d}\left[\Vert s-\tilde{s}\Vert_n^2\right]\leqslant C''\left(\dim(S_m)^{-2\alpha}+\frac{\dim(S_m)}{n}+\frac{1}{n}\right)\ .$$
Since $\alpha>\zeta_n$, we can consider some model $S_m$ in $\mathcal{F}^{BM}$ with dimension of order $n^{1/(2\alpha+1)}$ and derive that
$$E_{\varepsilon,d}\left[\Vert s-\tilde{s}\Vert_n^2\right]\leqslant C''\left(2n^{-2\alpha/(2\alpha+1)}+\frac{1}{n}\right)\leqslant C_{\alpha}n^{-2\alpha/(2\alpha+1)}\ .$$

\section{Lemmas}

This section is devoted to some technical results and their proofs.

\begin{lmm}\label{lemInt}
Let $p,q>0$ be two real numbers such that $2q<p$. For any $\theta>0$, the following inequality holds
$$\int_0^{\infty}\frac{qz^{q-1}}{(\theta+z)^{p/2}}dz\leqslant C(p,q)\theta^{q-p/2}$$
where $C(p,q)=p/(p-2q)$.
\end{lmm}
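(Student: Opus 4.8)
The plan is to reduce the claim to a scale-free inequality and then split the resulting integral at a single well-chosen point.

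First I would perform the change of variables $z=\theta u$, $dz=\theta\,du$, which turns the left-hand side into
\[
\int_0^{\infty}\frac{q(\theta u)^{q-1}}{(\theta+\theta u)^{p/2}}\,\theta\,du=\theta^{q-p/2}\int_0^{\infty}\frac{qu^{q-1}}{(1+u)^{p/2}}\,du .
\]
Since the prefactor $\theta^{q-p/2}$ is exactly the one appearing in the statement, it remains to prove the numerical bound
\[
\int_0^{\infty}\frac{qu^{q-1}}{(1+u)^{p/2}}\,du\leqslant\frac{p}{p-2q},
\]
and in particular to observe that this integral is finite: integrability at $0$ follows from $q>0$, and integrability at $\infty$ from $2q<p$.

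Next I would split the integral at $u=1$. On $(0,1]$ I would use $(1+u)^{-p/2}\leqslant1$, so that
\[
\int_0^{1}\frac{qu^{q-1}}{(1+u)^{p/2}}\,du\leqslant\int_0^1 qu^{q-1}\,du=1 .
\]
On $[1,\infty)$ I would use $(1+u)^{-p/2}\leqslant u^{-p/2}$, so that
\[
\int_1^{\infty}\frac{qu^{q-1}}{(1+u)^{p/2}}\,du\leqslant q\int_1^{\infty}u^{q-1-p/2}\,du=\frac{q}{p/2-q}=\frac{2q}{p-2q},
\]
where the last integral converges precisely because $q-1-p/2<-1$, i.e. $2q<p$. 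Adding the two contributions gives $1+\frac{2q}{p-2q}=\frac{p}{p-2q}$, which is the desired constant $C(p,q)$.

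There is no genuine obstacle here: the argument is a one-line scaling substitution followed by an elementary two-piece estimate. The only points requiring a little care are the choice of the split point (any fixed multiple of the scale would do, but $u=1$ is what makes the two pieces add up to the clean constant $p/(p-2q)$) and the check that the tail integral converges, which is exactly where the hypothesis $2q<p$ is used.
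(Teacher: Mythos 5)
Your proof is correct and, after the substitution $z=\theta u$, is exactly the paper's argument: the split at $u=1$ corresponds to the paper's split at $z=\theta$, and the two elementary bounds used on each piece are identical, giving the same constant $p/(p-2q)$. The scaling substitution is a cosmetic rephrasing rather than a different route.
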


\begin{proof}
By splitting the integral around $\theta$, we get
\begin{eqnarray*}
\int_0^{\infty}\frac{qz^{q-1}}{(\theta+z)^{p/2}}dz & = & \int_0^{\theta}\frac{qz^{q-1}}{(\theta+z)^{p/2}}dz+\int_{\theta}^{\infty}\frac{qz^{q-1}}{(\theta+z)^{p/2}}dz\\
& \leqslant & \theta^{-p/2}\int_0^{\theta}qz^{q-1}dz+\int_{\theta}^{\infty}qz^{q-1-p/2}dz\\
& \leqslant & \left(1+\frac{2q}{p-2q}\right)\theta^{q-p/2}\ .
\end{eqnarray*}
\end{proof}

The next lemma is a variant of a lemma due to Laurent and Massart.
\begin{lmm}\label{LM}
Let $A\in\Mn\setminus\{0\}$ and $\varepsilon=(\varepsilon_1,\dots,\varepsilon_n)'$ be a standard Gaussian vector of $\R^n$. For any $x>0$, we have
\begin{equation}\label{LM_minor}
\Pg\left(n\Vert A\varepsilon\Vert_n^2\geqslant\Tr(A\tra{A})+2\sqrt{\rho(A)^2\Tr(A\tra{A})x}+2\rho(A)^2x\right)\leqslant e^{-x}
\end{equation}
and
\begin{equation}\label{LM_major}
\Pg\left(n\Vert A\varepsilon\Vert_n^2\leqslant\Tr(A\tra{A})-2\sqrt{\rho(A)^2\Tr(A\tra{A})x}\right)\leqslant e^{-x}\ .
\end{equation}
\end{lmm}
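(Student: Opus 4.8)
The plan is to reduce the statement to the standard concentration inequality for weighted sums of independent squared standard Gaussian variables due to Laurent and Massart, after one crude bound on the weights.

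First I would observe that, since $\Vert\cdot\Vert_n=n^{-1/2}\Vert\cdot\Vert$ where $\Vert\cdot\Vert$ denotes the Euclidean norm, one has $n\Vert A\varepsilon\Vert_n^2=\Vert A\varepsilon\Vert^2=\tra\varepsilon\,(\tra A A)\,\varepsilon$, and that the norm $\rho$ induced by $\Vert\cdot\Vert_n$ coincides with the Euclidean operator norm of $A$; in particular $\rho(A)^2$ equals the largest eigenvalue of the symmetric nonnegative matrix $B:=\tra A A$. Write $B=\tra Q\Lambda Q$ with $Q$ orthogonal and $\Lambda=\mathrm{diag}(\lambda_1,\dots,\lambda_n)$, $\lambda_i\geq 0$. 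Since $A\neq0$ we have $B\neq0$, hence $\max_i\lambda_i=\rho(\tra A A)=\rho(A)^2>0$, while $\sum_i\lambda_i=\Tr(B)=\Tr(A\tra A)$. By the orthogonal invariance of the standard Gaussian law, $Q\varepsilon$ has the same distribution as $\varepsilon$, so that
\[
n\Vert A\varepsilon\Vert_n^2=\tra{(Q\varepsilon)}\Lambda(Q\varepsilon)\ \overset{\mathrm{d}}{=}\ \sum_{i=1}^n\lambda_i\varepsilon_i^2 .
\]

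Next I would apply the Laurent--Massart inequality to the nonnegative weights $(\lambda_i)_{1\le i\le n}$: for every $x>0$,
\[
\Pg\!\left(\sum_{i=1}^n\lambda_i\varepsilon_i^2\ \geq\ \sum_{i=1}^n\lambda_i+2\sqrt{x\sum_{i=1}^n\lambda_i^2}+2\big(\max_i\lambda_i\big)x\right)\leq e^{-x},
\]
together with the matching lower deviation bound in which the threshold is $\sum_i\lambda_i-2\sqrt{x\sum_i\lambda_i^2}$. It then suffices to use the elementary estimate $\sum_i\lambda_i^2\leq(\max_i\lambda_i)\sum_i\lambda_i=\rho(A)^2\,\Tr(A\tra A)$ and the identity $\max_i\lambda_i=\rho(A)^2$: enlarging the deviation threshold only decreases the probability, so \eqref{LM_minor} and \eqref{LM_major} follow at once from the two displayed inequalities and the distributional identity above.

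There is no genuine obstacle here; the only point that deserves a line of justification is the identification $\rho(A)^2=\rho(\tra A A)=\max_i\lambda_i$, which rests on $\rho$ being the operator norm attached to $\Vert\cdot\Vert_n$ (equivalently, to the Euclidean norm). If a self-contained argument is preferred, one may instead bound the Laplace transform $\E[\exp(t\sum_i\lambda_i\varepsilon_i^2)]=\prod_i(1-2\lambda_i t)^{-1/2}$ for $0<t<1/(2\rho(A)^2)$, control the series expansion of its logarithm by means of $\sum_i\lambda_i^k\leq\rho(A)^{2(k-1)}\Tr(A\tra A)$, and optimise the resulting Chernoff bounds exactly as in Laurent and Massart, which reproduces both inequalities directly.
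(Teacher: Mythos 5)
Your proof is correct and follows essentially the same route as the paper: diagonalize the quadratic form, use orthogonal invariance of the Gaussian law to reduce $n\Vert A\varepsilon\Vert_n^2$ to a weighted chi-square $\sum_i\lambda_i\varepsilon_i^2$, bound $\sum_i\lambda_i^2\leqslant\rho(A)^2\Tr(A\tra{A})$, and invoke Lemma 1 of Laurent and Massart. The paper works with the eigenvalues of $A\tra{A}$ (the covariance of $A\varepsilon$) while you use those of $\tra{A}A$ (the quadratic form), but these agree on nonzero eigenvalues and trace, so the two arguments are the same.
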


\begin{proof}
It is known that $A\varepsilon$ is a centered Gaussian vector of $\R^n$ of covariance matrix given by the positive symmetric matrix $A\tra{A}$. Let us denote by $a_1,\dots,a_n\geqslant0$ the eigenvalues of the $A\tra{A}$. Thus, the distribution of $n\Vert A\varepsilon\Vert_n^2$ is the same as the one of $\sum_{i=1}^na_i\varepsilon_i^2$. We have
$$\rho(A)^2=\max_{i=1,\dots,n}\vert a_i\vert\hspace{0.5cm}\text{and}\hspace{0.5cm}\Tr(A\tra{A})=\sum_{i=1}^na_i\ .$$
Because the $a_i$'s are nonnegative,
$$\sum_{i=1}^na_i^2\leqslant\rho(A)^2\Tr(A\tra{A})$$
and we can apply the Lemma 1 of \cite{LauMas00} to obtain the announced inequalities.
\end{proof}

We now introduce some properties that are satisfied by the estimator $\hat{\sigma}^2$ defined in (\ref{GUV_def_estim}).

\begin{lmm}\label{G_UV_lem_bias}
In the Gaussian case or under moment condition, the estimator $\hat{\sigma}^2$ satisfies
$$\E\left[\hat{\sigma}^2\right]=\sigma^2+\frac{n\Vert s-\pi s\Vert_n^2}{\Tr\left(\tra{P_n}(I_n-\pi)P_n\right)}\ .$$
\end{lmm}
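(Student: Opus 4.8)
The plan is to compute the expectation of $\hat\sigma^2 = n\Vert Y-\pi Y\Vert_n^2/\Tr(\tra{P_n}(I_n-\pi)P_n)$ directly from the definition, using the decomposition $Y = s + \sigma P_n\varepsilon$ coming from (\ref{a_my_frame}). Since the denominator is a deterministic constant, it suffices to evaluate $\E[\Vert Y-\pi Y\Vert_n^2] = \E[\Vert (I_n-\pi)Y\Vert_n^2]$. Writing $(I_n-\pi)Y = (I_n-\pi)s + \sigma(I_n-\pi)P_n\varepsilon$ and expanding the squared norm, I would split it into three terms: the deterministic term $\Vert(I_n-\pi)s\Vert_n^2 = \Vert s-\pi s\Vert_n^2$, the cross term $2\sigma\langle (I_n-\pi)s, (I_n-\pi)P_n\varepsilon\rangle_n$, and the noise term $\sigma^2\Vert(I_n-\pi)P_n\varepsilon\Vert_n^2$.

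First I would note that the cross term vanishes in expectation: $\E[\langle (I_n-\pi)s, (I_n-\pi)P_n\varepsilon\rangle_n] = \langle (I_n-\pi)s, (I_n-\pi)P_n\,\E[\varepsilon]\rangle_n = 0$, since $\varepsilon$ is centered under either {\bf ($\text{H}_\text{Gau}$)} or {\bf ($\text{H}_\text{Mom}$)} (only the first two moments of $\varepsilon$ are used, which is why the formula holds in both settings). Next, for the noise term, I would use the same computation as in the proof of Proposition \ref{prop_risk}: since the components of $\varepsilon$ are independent, centered, with unit variance, $\E[\Vert(I_n-\pi)P_n\varepsilon\Vert_n^2] = \frac1n\Tr\big(\tra{P_n}(I_n-\pi)\tra{(I_n-\pi)}P_n\big)$; using that $I_n-\pi$ is an orthogonal projection, hence symmetric and idempotent, this simplifies to $\frac1n\Tr(\tra{P_n}(I_n-\pi)P_n)$.

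Collecting the pieces gives $\E[\Vert Y-\pi Y\Vert_n^2] = \Vert s-\pi s\Vert_n^2 + \frac{\sigma^2}{n}\Tr(\tra{P_n}(I_n-\pi)P_n)$, and dividing by $\Tr(\tra{P_n}(I_n-\pi)P_n)/n$ yields
$$\E[\hat\sigma^2] = \sigma^2 + \frac{n\Vert s-\pi s\Vert_n^2}{\Tr(\tra{P_n}(I_n-\pi)P_n)}\ ,$$
as claimed. There is no real obstacle here; the only point requiring a word of care is that $\Tr(\tra{P_n}(I_n-\pi)P_n) > 0$, so that the estimator and the quotient are well defined — this follows from hypothesis (\ref{GUV_hyp_dim}), which guarantees $\Tr(\tra{P_n}(I_n-\pi)P_n) \geqslant \Tr(\tra{P_n}P_n)/2 > 0$ (the matrix $\tra{P_n}P_n$ being nonzero since $s\in\im(P_n)$ and the framework is nontrivial).
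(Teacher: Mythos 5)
Your proposal is correct and follows essentially the same route as the paper: expand $\Vert Y-\pi Y\Vert_n^2$ into a bias term, a cross term that vanishes in expectation, and a noise term whose expectation is computed via the independence, centering, and unit variance of the $\varepsilon_i$'s (the paper writes the cross term as $2\sigma\langle s-\pi s, P_n\varepsilon\rangle_n$, which equals your $2\sigma\langle (I_n-\pi)s, (I_n-\pi)P_n\varepsilon\rangle_n$ since $I_n-\pi$ is an orthogonal projection). Your extra remark that (\ref{GUV_hyp_dim}) guarantees $\Tr(\tra{P_n}(I_n-\pi)P_n)>0$, so the estimator is well defined, is a sound point the paper leaves implicit.
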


\begin{proof}
We have the following decomposition
\begin{equation}\label{decomp_norm}
\Vert Y-\pi Y\Vert_n^2=\Vert s-\pi s\Vert_n^2+\sigma^2\Vert (I_n-\pi)P_n\varepsilon\Vert_n^2+2\sigma\langle s-\pi s,P_n\varepsilon\rangle_n\ .
\end{equation}
The components of $\varepsilon$ are independent and centered with unit variance. Thus, taking the expectation on both side, we obtain
$$\E\left[\Vert Y-\pi Y\Vert_n^2\right]=\Vert s-\pi s\Vert_n^2+\sigma^2\frac{\Tr(\tra{P_n}(I_n-\pi)P_n)}{n}\ .$$
\end{proof}

\begin{lmm}\label{G_UV_lem_conc}
Consider the estimator $\hat{\sigma}^2$ defined in the Gaussian case. For any $0<\delta<1/2$,
$$\Pg\left(\hat{\sigma}^2\leqslant(1-2\delta)\sigma^2\right)\leqslant C_{\delta}\exp\left(-\frac{\delta^2\Tr(\tra{P_n}P_n)}{16\rho^2(P_n)}\right)$$
where $C_{\delta}>1$ only depends on $\delta$.
\end{lmm}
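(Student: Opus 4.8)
The plan is to combine the algebraic identity \eqref{decomp_norm} from the proof of Lemma~\ref{G_UV_lem_bias} with the lower deviation bound \eqref{LM_major} of Lemma~\ref{LM}. Set $T=\Tr\bigl(\tra{P_n}(I_n-\pi)P_n\bigr)$, so that $\hat\sigma^2=n\Vert Y-\pi Y\Vert_n^2/T$, and recall that $T\geqslant\Tr(\tra{P_n}P_n)/2$ by \eqref{GUV_hyp_dim}. By \eqref{decomp_norm},
$$n\Vert Y-\pi Y\Vert_n^2=n\Vert s-\pi s\Vert_n^2+2n\sigma\langle s-\pi s,P_n\varepsilon\rangle_n+\sigma^2n\Vert(I_n-\pi)P_n\varepsilon\Vert_n^2,$$
so $\{\hat\sigma^2\leqslant(1-2\delta)\sigma^2\}$ is the event that this sum is at most $(1-2\delta)\sigma^2T$.

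First I would control the main stochastic term $\sigma^2n\Vert(I_n-\pi)P_n\varepsilon\Vert_n^2$. Applying \eqref{LM_major} with $A=(I_n-\pi)P_n$, for which $\Tr(A\tra{A})=T$ and $\rho(A)\leqslant\rho(I_n-\pi)\rho(P_n)\leqslant\rho(P_n)$, and taking $x=\delta^2T/(4\rho^2(P_n))$, gives
$$\Pg\Bigl(\sigma^2n\Vert(I_n-\pi)P_n\varepsilon\Vert_n^2\leqslant(1-\delta)\sigma^2T\Bigr)\leqslant\exp\Bigl(-\frac{\delta^2T}{4\rho^2(P_n)}\Bigr).$$

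Next I would deal with the bias-plus-cross term $B:=n\Vert s-\pi s\Vert_n^2+2n\sigma\langle s-\pi s,P_n\varepsilon\rangle_n$. Since $(I_n-\pi)(s-\pi s)=s-\pi s$, the cross term equals $2\sigma$ times a Euclidean inner product $\langle\tra{P_n}(s-\pi s),\varepsilon\rangle$, hence equals $2\sigma\Vert\tra{P_n}(s-\pi s)\Vert\,g$ for some standard Gaussian variable $g$; using $\Vert\tra{P_n}(s-\pi s)\Vert\leqslant\sqrt{n}\,\rho(P_n)\Vert s-\pi s\Vert_n$ and completing the square one obtains, pointwise in $\varepsilon$, $B\geqslant-\sigma^2\rho^2(P_n)g^2$. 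Therefore on $\{\hat\sigma^2\leqslant(1-2\delta)\sigma^2\}$ one has $\sigma^2n\Vert(I_n-\pi)P_n\varepsilon\Vert_n^2\leqslant(1-2\delta)\sigma^2T+\sigma^2\rho^2(P_n)g^2$, so that
$$\{\hat\sigma^2\leqslant(1-2\delta)\sigma^2\}\subseteq\Bigl\{\sigma^2n\Vert(I_n-\pi)P_n\varepsilon\Vert_n^2\leqslant(1-\delta)\sigma^2T\Bigr\}\cup\Bigl\{\rho^2(P_n)g^2>\delta T\Bigr\}.$$
Bounding the first event by the previous display and the second by the Gaussian tail estimate $\Pg(g^2>t)\leqslant2e^{-t/2}$, then using $\delta<1/2$ (whence $\delta^2/4<\delta/2$) to absorb the second exponential into the first, and finally $T\geqslant\Tr(\tra{P_n}P_n)/2$, one gets the claim with $C_\delta=3$ (in fact with $\delta^2/16$ improved to $\delta^2/8$ in the exponent, so there is slack).

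The main obstacle is the cross term $2n\sigma\langle s-\pi s,P_n\varepsilon\rangle_n$: it is centered Gaussian but its variance is proportional to the \emph{uncontrolled} bias $\Vert s-\pi s\Vert_n^2$, so it can neither be discarded nor bounded in terms of a fixed quantity. The completing-the-square device above converts it, together with the bias itself, into a harmless $-\sigma^2\rho^2(P_n)g^2$ whose tail is uniform in the bias, which is exactly what makes the argument go through.
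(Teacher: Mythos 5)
Your proof is correct and the calculations check out; let me compare it with the paper's route, which is similar in spirit but differs in the key decomposition. Both proofs start from the identity \eqref{decomp_norm} and both must neutralize the cross term $2\sigma\langle s-\pi s,P_n\varepsilon\rangle_n$, whose variance scales with the uncontrolled bias $\Vert s-\pi s\Vert_n^2$, and both ultimately use the AM-GM bound $2ab\leqslant a^2+b^2$ to absorb that cross term against the nonnegative bias. Where you differ is in \emph{what} you absorb it into. The paper sets $u=(s-\pi s)/\Vert s-\pi s\Vert_n$, bounds $2\sigma\vert\langle s-\pi s,P_n\varepsilon\rangle_n\vert\leqslant\Vert s-\pi s\Vert_n^2+\sigma^2\langle u,P_n\varepsilon\rangle_n^2$, and notes that since $u\perp V$ the term $\langle u,P_n\varepsilon\rangle_n^2$ combines with $\Vert\pi P_n\varepsilon\Vert_n^2$ into $\Vert\pi'P_n\varepsilon\Vert_n^2$ for the enlarged projection $\pi'$ onto $V\oplus\R u$. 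This yields the clean lower bound $\Vert Y-\pi Y\Vert_n^2\geqslant\sigma^2\bigl(\Vert P_n\varepsilon\Vert_n^2-\Vert\pi'P_n\varepsilon\Vert_n^2\bigr)$, and the two resulting deviation events are then handled by Lemma~\ref{LM}, \eqref{LM_major} for the lower deviation of $\Vert P_n\varepsilon\Vert_n^2$ and \eqref{LM_minor} for the upper deviation of $\Vert\pi'P_n\varepsilon\Vert_n^2$. You instead complete the square directly, which turns bias-plus-cross into $-\sigma^2\rho^2(P_n)g^2$ with $g$ a one-dimensional standard Gaussian (a function of $\varepsilon$), keep the main $\chi^2$-type term $\sigma^2n\Vert(I_n-\pi)P_n\varepsilon\Vert_n^2$ whose trace is exactly the denominator $T$ of $\hat\sigma^2$, apply only \eqref{LM_major} with $A=(I_n-\pi)P_n$, and dispose of $g^2$ by an elementary Gaussian tail bound. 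Your argument is therefore somewhat more self-contained (it avoids \eqref{LM_minor} altogether, needing only the lower deviation half of Laurent--Massart plus a scalar tail), it picks the ``right'' $\chi^2$ matrix so no enlarged projection is required, and it even yields a better exponent ($\delta^2/8$ rather than $\delta^2/16$). The paper's version keeps the two deviation bounds formally symmetric, which is perhaps tidier and mirrors the structure of the proof of Lemma~\ref{NG_UV_lem_conc} under moment conditions, but there is no mathematical gap in either. One nit, for completeness: you should note that $A=(I_n-\pi)P_n$ is nonzero, which is guaranteed by \eqref{GUV_hyp_dim} whenever $P_n\neq0$, so that Lemma~\ref{LM} applies.
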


\begin{proof}
Let $a\in V^{\perp}$ such that $\Vert a\Vert_n^2=1$, we set
$$
u=\left\{
\begin{array}{ll}
(s-\pi s)/\Vert s-\pi s\Vert_n & \text{if }s\neq\pi s\ ,\\
a & \text{otherwise}\ .
\end{array}
\right.
$$
We have
\begin{eqnarray*}
2\sigma\vert\langle s-\pi s,P_n\varepsilon\rangle_n\vert & = & 2\sigma\vert\langle u,P_n\varepsilon\rangle_n\vert\times\Vert s-\pi s\Vert_n\\
& \leqslant & \Vert s-\pi s\Vert_n^2+\sigma^2\langle u,P_n\varepsilon\rangle_n^2
\end{eqnarray*}
and we deduce from (\ref{decomp_norm})
\begin{eqnarray}
\Vert Y-\pi Y\Vert_n^2 & \geqslant & \sigma^2\Vert (I_n-\pi)P_n\varepsilon\Vert_n^2-\sigma^2\langle u,P_n\varepsilon\rangle_n^2\nonumber\\
& = & \sigma^2\left(\Vert P_n\varepsilon\Vert_n^2-\left(\Vert\pi P_n\varepsilon\Vert_n^2+\langle u,P_n\varepsilon\rangle_n^2\right)\right)\nonumber\\
& = & \sigma^2\left(\Vert P_n\varepsilon\Vert_n^2-\Vert\pi' P_n\varepsilon\Vert_n^2\right)\label{lem_decomp_norm}
\end{eqnarray}
where $\pi'$ is the orthogonal projection onto $V\oplus\R u$. Consequently,
\begin{eqnarray}
\Pg\left(\hat{\sigma}\leqslant(1-2\delta)\sigma^2\right) & \leqslant & \Pg\left(n\Vert P_n\varepsilon\Vert_n^2-n\Vert\pi'P_n\varepsilon\Vert_n^2\leqslant(1-2\delta)\Tr(\tra{P_n}(I_n-\pi)P_n)\right)\nonumber\\
& \leqslant & \Pg\left(n\Vert P_n\varepsilon\Vert_n^2-\Tr(\tra{P_n}P_n)\leqslant-\delta\Tr(\tra{P_n}(I_n-\pi)P_n)\right)\nonumber\\
& & \hspace{1cm}+\Pg\left(n\Vert\pi'P_n\varepsilon\Vert_n^2-\Tr(\tra{P_n}\pi P_n)\geqslant\delta\Tr(\tra{P_n}(I_n-\pi)P_n)\right)\nonumber\\
& = & \Pg_1+\Pg_2\ .\label{P1_plus_P2}
\end{eqnarray}
The Inequality (\ref{LM_major}) and (\ref{GUV_hyp_dim}) give us the following upperbound for $\Pg_1$,
\begin{equation}\label{major_P1}
\Pg_1\leqslant\exp\left(-\frac{\delta^2\Tr(\tra{P_n}(I_n-\pi)P_n)^2}{4\rho^2(P_n)\Tr(\tra{P_n}P_n)}\right)\leqslant\exp\left(-\frac{\delta^2\Tr(\tra{P_n}P_n)}{16\rho^2(P_n)}\right)\ .
\end{equation}
By the properties of the norm $\rho$, we deduce that
\begin{equation}\label{major_pi_prime}
\Tr(\tra{P_n}\pi'P_n)=\Tr(\tra{P_n}\pi P_n)+\Tr(\tra{P_n}\pi_uP_n)\leqslant\Tr(\tra{P_n}\pi P_n)+\rho^2(P_n)
\end{equation}
where we have defined $\pi_u$ as the orthogonal projection onto $\R u$.
We now apply (\ref{LM_minor}) with $A=\pi'P_n$ to obtain, for any $x>0$,
\begin{eqnarray*}
& & \Pg\left(n\Vert\pi'P_n\varepsilon\Vert_n^2\geqslant(1+\delta/2)\Tr(\tra{P_n}\pi P_n)+(1+\delta/2)\rho^2(P_n)+(2+2/\delta)x\right)\nonumber\\
& & \hspace{0.5cm}\leqslant\Pg\left(n\Vert\pi'P_n\varepsilon\Vert_n^2\geqslant(1+\delta/2)\Tr(\tra{P_n}\pi'P_n)+(2+2/\delta)x\right)\nonumber\\
& & \hspace{0.5cm}\leqslant\Pg\left(n\Vert\pi'P_n\varepsilon\Vert_n^2-\Tr(\tra{P_n}\pi'P_n)\geqslant2\sqrt{\Tr(\tra{P_n\pi'P_n})x}+2x\right)\nonumber\\
& & \hspace{0.5cm}\leqslant \exp\left(-x/\rho^2(\pi'P_n)\right)\nonumber\\
& & \hspace{0.5cm}\leqslant \exp\left(-x/\rho^2(P_n)\right)\ .\nonumber
\end{eqnarray*}
Obviously, this inequality can be extended to $x\in\R$,
\begin{equation}
\Pg\left(n\Vert\pi'P_n\varepsilon\Vert_n^2\geqslant(1+\delta/2)\Tr(\tra{P_n}\pi P_n)+(1+\delta/2)\rho^2(P_n)+(2+2/\delta)x\right)\leqslant \exp\left(-\frac{x\vee0}{\rho^2(P_n)}\right)
\end{equation}
and we take
\begin{eqnarray*}
x & = & \frac{\delta}{2(\delta+1)}\left(\delta\Tr(\tra{P_n}(I_n-\pi)P_n)-\frac{\delta}{2}\Tr(\tra{P_n}\pi P_n)-\left(1+\frac{\delta}{2}\right)\rho^2(P_n)\right)\\
& = & \frac{\delta}{2(\delta+1)}\left(\delta\Tr(\tra{P_n}P_n)-\frac{3\delta}{2}\Tr(\tra{P_n}\pi P_n)-\left(1+\frac{\delta}{2}\right)\rho^2(P_n)\right)\\
& \geqslant & \frac{\delta}{2(\delta+1)}\left(\frac{\delta\Tr(\tra{P_n}P_n)}{4}-\left(1+\frac{\delta}{2}\right)\rho^2(P_n)\right)\ .
\end{eqnarray*}
Finally, we get
\begin{eqnarray}
\Pg_2 & \leqslant & \exp\left(-\frac{\delta}{2(\delta+1)\rho^2(P_n)}\left(\frac{\delta\Tr(\tra{P_n}P_n)}{4}-\left(1+\frac{\delta}{2}\right)\rho^2(P_n)\right)_+\right)\nonumber\\
& \leqslant & \exp\left(-\frac{\delta(\delta+2)}{4(\delta+1)}\left(\frac{\delta\Tr(\tra{P_n}P_n)}{2(\delta+2)\rho^2(P_n)}-1\right)_+\right)\nonumber\\
& = & \left\{\exp\left(\frac{\delta(\delta+2)}{4(\delta+1)}\right)\times\exp\left(-\frac{\delta^2\Tr(\tra{P_n}P_n)}{8(\delta+1)\rho^2(P_n)}\right)\right\}\wedge 1\ .\label{major_P2}
\end{eqnarray}
To conclude, we use (\ref{major_P1}) and (\ref{major_P2}) in (\ref{P1_plus_P2}).
\end{proof}

\begin{lmm}\label{NG_UV_lem_conc}
Consider the estimator $\hat{\sigma}^2$ defined under moment condition. For any $0<\delta<1/3$, there exists a sequence $(\kappa_{\delta,n})_{n\in\N}$ of positive numbers that tends to a positive constant $\kappa_{\delta}$ as $\Tr(\tra{P_n}P_n)/\rho^2(P_n)$ tends to infinity, such that
$$\Pg\left(\hat{\sigma}^2\leqslant(1-3\delta)\sigma^2\right)\leqslant C(p,\delta)\kappa_{\delta,n}\tau_p\rho^{(p-2)\vee2}(P_n)\Tr(\tra{P_n}P_n)^{-((p/2-1)\wedge1)}\ .$$
\end{lmm}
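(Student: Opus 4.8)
The plan is to run the proof of Lemma~\ref{G_UV_lem_conc} as far as it stays valid without Gaussianity, and then to replace the Laurent--Massart estimates of Lemma~\ref{LM} by moment inequalities of the kind already used in the proof of Theorem~\ref{NG_th1}. First I would reintroduce the unit vector $u$ (a normalised $s-\pi s$, or an arbitrary unit vector of $V^{\perp}$ when $s=\pi s$) and the orthogonal projection $\pi'$ onto $V\oplus\R u$, exactly as in the proof of Lemma~\ref{G_UV_lem_conc}. The chain of inequalities producing
$$\Vert Y-\pi Y\Vert_n^2\geqslant\sigma^2\bigl(\Vert P_n\varepsilon\Vert_n^2-\Vert\pi' P_n\varepsilon\Vert_n^2\bigr)\,,$$
that is, (\ref{lem_decomp_norm}), is purely algebraic and only uses that the $\varepsilon_i$ are centred with unit variance, so it carries over verbatim. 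Consequently, splitting $3\delta=\tfrac{3\delta}{2}+\tfrac{3\delta}{2}$ and proceeding as in (\ref{P1_plus_P2}), one obtains
$$\Pg\bigl(\hat\sigma^2\leqslant(1-3\delta)\sigma^2\bigr)\leqslant\Pg_1+\Pg_2\,,$$
where $\Pg_1$ is the probability that $n\Vert P_n\varepsilon\Vert_n^2$ falls below $\Tr(\tra{P_n}P_n)$ by at least $\tfrac{3\delta}{2}\Tr(\tra{P_n}(I_n-\pi)P_n)$ and $\Pg_2$ the probability that $n\Vert\pi'P_n\varepsilon\Vert_n^2$ exceeds $\Tr(\tra{P_n}\pi P_n)$ by the same amount. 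By (\ref{GUV_hyp_dim}) each deviation level is at least $\tfrac{3\delta}{4}\Tr(\tra{P_n}P_n)$, and in the case of $\Pg_2$ one first absorbs the correction $\Tr(\tra{P_n}\pi_uP_n)\leqslant\rho^2(P_n)$ coming from (\ref{major_pi_prime}), which leaves a level at least $\tfrac{3\delta}{4}\Tr(\tra{P_n}P_n)-\rho^2(P_n)$.

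Both $\Pg_1$ and $\Pg_2$ are then controlled by Markov's inequality with exponent $p/2$ followed by Rosenthal's inequality (Chapter~2 of~\cite{Pet95}) --- one could instead invoke Corollary~5.1 of~\cite{Bar00} for $\Pg_2$ --- applied to the quadratic forms $n\Vert P_n\varepsilon\Vert_n^2=\sum_i a_i\varepsilon_i^2$ and $n\Vert\pi'P_n\varepsilon\Vert_n^2=\sum_i b_i\varepsilon_i^2$, where $(a_i)$ and $(b_i)$ are the eigenvalues of $\tra{P_n}P_n$ and $\tra{P_n}\pi'P_n$. Here the hypothesis $p>2$ is essential: it gives $\varepsilon_i^2$ the $p/2>1$ finite moments $\E|\varepsilon_i^2-1|^{p/2}\leqslant C(p)\tau_p$ (using $\tau_p\geqslant1$, see (\ref{minor_taup})) and, when $p>4$, the bound $\E[\varepsilon_1^4]\leqslant\tau_p^{4/p}\leqslant\tau_p$. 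Rosenthal's bound then splits according to whether $p\leqslant4$ or $p>4$; combined with $\sum_i a_i^{p/2}\leqslant\rho^{p-2}(P_n)\Tr(\tra{P_n}P_n)$, $\sum_i a_i^2\leqslant\rho^2(P_n)\Tr(\tra{P_n}P_n)$ and the analogous inequalities for the $b_i$ together with $\Tr(\tra{P_n}\pi'P_n)\leqslant\Tr(\tra{P_n}P_n)$, it yields for each probability an upper bound of the form
$$C(p,\delta)\,\tau_p\,\rho^{(p-2)\vee2}(P_n)\,\Tr(\tra{P_n}P_n)^{-((p/2-1)\wedge1)}$$
times lower-order factors. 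These lower-order factors are exactly what the sequence $\kappa_{\delta,n}$ of the statement records: raising the deviation level $\tfrac{3\delta}{4}\Tr(\tra{P_n}P_n)-\rho^2(P_n)$ to the power $-p/2$ produces a factor $\bigl(1-O(\rho^2(P_n)/\Tr(\tra{P_n}P_n))\bigr)^{-p/2}$, which converges to $1$, hence to a positive constant $\kappa_\delta$, as $\Tr(\tra{P_n}P_n)/\rho^2(P_n)\to\infty$, while the surplus powers of $\Tr(\tra{P_n}P_n)$ and $\rho(P_n)$ left over when passing between the two regimes are bounded by $1$ since $\rho(P_n)>1$ and $\Tr(\tra{P_n}P_n)\geqslant1$. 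Adding $\Pg_1$ and $\Pg_2$ and using $\tau_p\geqslant1$ gives the announced inequality.

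The main obstacle is precisely the loss of exponential concentration: one no longer has the sharp $\chi^{2}$-type control of Lemma~\ref{LM}, so the whole probabilistic step has to be redone with Markov plus Rosenthal, and the delicate point is to push the two Rosenthal regimes ($2<p\leqslant4$ and $p>4$) through so that they reassemble into exactly the exponents $(p-2)\vee2$ on $\rho(P_n)$ and $(p/2-1)\wedge1$ on $\Tr(\tra{P_n}P_n)$, and then to verify that the residual $n$-dependence (through $\rho^2(P_n)/\Tr(\tra{P_n}P_n)$ and those surplus powers) genuinely converges rather than merely staying bounded, so that $\kappa_{\delta,n}$ has the stated limit. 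A secondary technical nuisance, already present in the Gaussian proof, is the bookkeeping around $\pi'$ versus $\pi$, i.e.\ carrying the extra $\rho^2(P_n)$ term from (\ref{major_pi_prime}) consistently through the estimate of $\Pg_2$.
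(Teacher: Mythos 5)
The step in your proof where you write the quadratic forms as $n\Vert P_n\varepsilon\Vert_n^2=\sum_i a_i\varepsilon_i^2$ and $n\Vert\pi'P_n\varepsilon\Vert_n^2=\sum_i b_i\varepsilon_i^2$, with $(a_i)$ and $(b_i)$ the eigenvalues of $\tra{P_n}P_n$ and $\tra{P_n}\pi'P_n$, is only valid under Gaussianity. In the Gaussian case one can rotate to the eigenbasis of $\tra{P_n}P_n$ and the rotated noise is still standard Gaussian, which is exactly why Lemma \ref{LM} is phrased that way. Under \textbf{($\text{H}_\text{Mom}$)} the noise $\varepsilon$ is not rotationally invariant: writing $\tra{P_n}P_n=U\Lambda\tra{U}$ gives $n\Vert P_n\varepsilon\Vert_n^2=\sum_i\lambda_i\eta_i^2$ with $\eta=\tra{U}\varepsilon$, and the $\eta_i$ are neither independent nor controlled by $\tau_p$, so Rosenthal cannot be applied to them. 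This is not a cosmetic issue; it is precisely the obstruction that dictates the structure of the paper's proof.

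The paper circumvents it by a different decomposition, and this is where your proposal genuinely diverges. Instead of diagonalising, the paper expands $n\Vert P_n\varepsilon\Vert_n^2-\Tr(\tra{P_n}P_n)=\sum_i(\tra{P_n}P_n)_{ii}(\varepsilon_i^2-1)+2\sum_{i<j}(\tra{P_n}P_n)_{ij}\varepsilon_i\varepsilon_j$, splits the deviation event into \emph{three} probabilities $\bar{\Pg}_1,\bar{\Pg}_2,\bar{\Pg}_3$ (which is why the statement reads $1-3\delta$ rather than $1-2\delta$), and treats each by a tool adapted to its structure: $\bar{\Pg}_1$ is a sum of independent centred variables handled by Markov with exponent $p/2$ and Rosenthal ($p\geqslant4$) or the von Bahr--Esseen inequality ($2<p<4$); $\bar{\Pg}_2$ is a degenerate off-diagonal form handled by a plain second-moment Chebyshev bound, which is the source of the $\wedge1$ in the exponent $(p/2-1)\wedge1$ (not a by-product of two Rosenthal regimes, as you suggest); and $\bar{\Pg}_3$ goes through Corollary 5.1 of \cite{Bar00}, the moment analogue of Lemma \ref{LM} for a genuine quadratic form $\varepsilon^T\tilde{A}\varepsilon$. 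Your remark that one could invoke Corollary 5.1 of \cite{Bar00} for $\Pg_2$ is the right instinct, but it has to be used for the $\pi'$ term, while the lower-tail deviation of $n\Vert P_n\varepsilon\Vert_n^2$ cannot be fed to Rosenthal directly and must itself be broken into diagonal plus off-diagonal. Your $\kappa_{\delta,n}$ discussion and the bookkeeping around $\pi'$ versus $\pi$ are sound once the decomposition is corrected, but as written the core probabilistic step does not go through.
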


\begin{proof}
We define the vector $u\in V^{\perp}$ and the projection matrix $\pi'$ as we did in the proof of Lemma \ref{G_UV_lem_conc}. The lowerbound (\ref{lem_decomp_norm}) does not depend on the distribution of $\varepsilon$ and gives
\begin{equation}\label{NG_ineg06}
\Pg\left(\hat{\sigma}^2\leqslant(1-3\delta)\sigma^2\right)\leqslant\Pg\left(n\Vert P_n\varepsilon\Vert_n^2-n\Vert\pi'P_n\varepsilon\Vert_n^2\leqslant(1-3\delta)\Tr(\tra{P_n}(I_n-\pi)P_n)\right)\ .
\end{equation}
Since the matrix $\tra{P_n}P_n$ is symmetric, we have the following decomposition
\begin{eqnarray*}
n\Vert P_n\varepsilon\Vert_n^2-\Tr(\tra{P_n}P_n) & = & n\langle \tra{P_n}P_n\varepsilon,\varepsilon\rangle_n-\Tr(\tra{P_n}P_n)\\
& = & \sum_{i=1}^n\sum_{j=1}^n(\tra{P_n}P_n)_{ij}\varepsilon_i\varepsilon_j-\Tr(\tra{P_n}P_n)\\
& = & \sum_{i=1}^n(\tra{P_n}P_n)_{ii}(\varepsilon_i^2-1)+2\sum_{i=1}^n\sum_{j>i}(\tra{P_n}P_n)_{ij}\varepsilon_i\varepsilon_j\ .
\end{eqnarray*}
Thus, (\ref{NG_ineg06}) leads to
\begin{equation}\label{P1P2P3}
\Pg\left(\hat{\sigma}^2\leqslant(1-3\delta)\sigma^2\right)\leqslant\bar{\Pg}_1+\bar{\Pg}_2+\bar{\Pg}_3
\end{equation}
where we have set
$$\bar{\Pg}_1=\Pg\left(\sum_{i=1}^n(\tra{P_n}P_n)_{ii}(\varepsilon_i^2-1)\leqslant-\delta\Tr(\tra{P_n}(I_n-\pi)P_n)\right)\ ,$$
$$\bar{\Pg}_2=\Pg\left(2\sum_{i=1}^n\sum_{j>i}(\tra{P_n}P_n)_{ij}\varepsilon_i\varepsilon_j\leqslant-\delta\Tr(\tra{P_n}(I_n-\pi)P_n)\right)$$
and
$$\bar{\Pg}_3=\Pg\left(n\Vert\pi'P_n\varepsilon\Vert_n^2-\Tr(\tra{P_n}\pi P_n)\geqslant\delta\Tr(\tra{P_n}(I_n-\pi)P_n))\right)\ .$$

Note that $\bar{\Pg}_1$ concerns a sum of independent centered random variables. By Markov's inequality and (\ref{GUV_hyp_dim}), we get
\begin{eqnarray}
\bar{\Pg}_1 & \leqslant & \Pg\left(\left\vert\sum_{i=1}^n(\tra{P_n}P_n)_{ii}(\varepsilon_i^2-1)\right\vert\geqslant\delta\Tr(\tra{P_n}(I_n-\pi)P_n)\right)\nonumber\\
& \leqslant & \delta^{-p/2}\Tr(\tra{P_n}(I_n-\pi)P_n)^{-p/2}\E\left[\left\vert\sum_{i=1}^n(\tra{P_n}P_n)_{ii}(\varepsilon_i^2-1)\right\vert^{p/2}\right]\nonumber\\
& \leqslant & 2^{p/2}\delta^{-p/2}\Tr(\tra{P_n}P_n)^{-p/2}\E\left[\left\vert\sum_{i=1}^n(\tra{P_n}P_n)_{ii}(\varepsilon_i^2-1)\right\vert^{p/2}\right]\ .\label{NG_ineg07}
\end{eqnarray}
If $p\geqslant4$ then we use the Rosenthal Inequality (see Chapter 2 of \cite{Pet95}) and (\ref{minor_taup}) to obtain
\begin{equation*}
\E\left[\left\vert\sum_{i=1}^n(\tra{P_n}P_n)_{ii}(\varepsilon_i^2-1)\right\vert^{p/2}\right]\leqslant C'(p)\tau_p\left(\sum_{i=1}^n(\tra{P_n}P_n)_{ii}^{p/2}+\left(\sum_{i=1}^n(\tra{P_n}P_n)_{ii}^2\right)^{p/4}\right)\ .
\end{equation*}
Since, for any $i\in\{1,\dots,n\}$, $(\tra{P_n}P_n)_{ii}\leqslant\rho^2(P_n)$, by a convexity argument, we get
$$\E\left[\left\vert\sum_{i=1}^n(\tra{P_n}P_n)_{ii}(\varepsilon_i^2-1)\right\vert^{p/2}\right]\leqslant2C'(p)\tau_p\rho^{p/2}(P_n)\Tr(\tra{P_n}P_n)^{p/4}\ .$$
If $2<p<4$, we refer to \cite{BahEss65} for the following inequality
\begin{equation*}
\E\left[\left\vert\sum_{i=1}^n(\tra{P_n}P_n)_{ii}(\varepsilon_i^2-1)\right\vert^{p/2}\right] \leqslant 2\sum_{i=1}^n\left\vert(\tra{P_n}P_n)_{ii}(\varepsilon_i^2-1)\right\vert^{p/2} \leqslant C''(p)\tau_p\rho^{p-2}(P_n)\Tr(\tra{P_n}P_n)\ .
\end{equation*}
In both cases, (\ref{NG_ineg07}) becomes
\begin{equation}\label{NG_P1major}
\bar{\Pg}_1\leqslant C(p)\delta^{-p/2}\tau_p\rho^{p/2}(P_n)\Tr(\tra{P_n}P_n)^{-\beta}
\end{equation}
with $\beta=(p/2-1)\wedge p/4$.

Let us now bound $\bar{\Pg}_2$. By Chebyshev's inequality, we get
\begin{eqnarray*}
\bar{\Pg}_2 & \leqslant & \Pg\left(\left\vert2\sum_{i=1}^n\sum_{j>i}(\tra{P_n}P_n)_{ij}\varepsilon_i\varepsilon_j\right\vert\geqslant\delta\Tr(\tra{P_n}(I_n-\pi)P_n)\right)\\
& \leqslant & \delta^{-2}\Tr(\tra{P_n}(I_n-\pi)P_n)^{-2}\E\left[\left(2\sum_{i=1}^n\sum_{j>i}(\tra{P_n}P_n)_{ij}\varepsilon_i\varepsilon_j\right)^2\right]\\
& \leqslant & 4\delta^{-2}\Tr(\tra{P_n}P_n)^{-2}\sum_{i=1}^n\sum_{j>i}\sum_{p=1}^n\sum_{q>p}(\tra{P_n}P_n)_{ij}(\tra{P_n}P_n)_{pq}\E[\varepsilon_i\varepsilon_j\varepsilon_p\varepsilon_q]\ .
\end{eqnarray*}
Note that, by independence between the components of $\varepsilon$, the expectation in the last sum is not null if and only if $i=p$ and $j=q$ (in this case, its value is $1$). Thus, we have
\begin{eqnarray}
\bar{\Pg}_2 & \leqslant & 4\delta^{-2}\Tr(\tra{P_n}P_n)^{-2}\sum_{i=1}^n\sum_{j>i}(\tra{P_n}P_n)_{ij}^2\nonumber\\
& \leqslant & 4\delta^{-2}\Tr(\tra{P_n}P_n)^{-2}\Tr((\tra{P_n}P_n)^2)\nonumber\\
& \leqslant & 4\delta^{-2}\rho^2(P_n)\Tr(\tra{P_n}P_n)^{-1}\ .\label{NG_P2major}
\end{eqnarray}

We finally focus on $\bar{\Pg}_3$. Recalling (\ref{major_pi_prime}), we apply Corollary 5.1 of \cite{Bar00} with $\tilde{A}=\tra{P_n}\pi'P_n$ to obtain, for any $x>0$,

\begin{eqnarray*}
& & \Pg\left(n\Vert\pi'P_n\varepsilon\Vert_n^2\geqslant(1+\delta/2)\Tr(\tra{P_n}\pi P_n)+(1+\delta/2)\rho^2(P_n)+(1+2/\delta)x\right)\nonumber\\
& & \hspace{0.5cm}\leqslant\Pg\left(n\Vert\pi'P_n\varepsilon\Vert_n^2\geqslant(1+\delta/2)\Tr(\tra{P_n}\pi'P_n)+(1+2/\delta)x\right)\nonumber\\
& & \hspace{0.5cm}\leqslant\Pg\left(n\Vert\pi'P_n\varepsilon\Vert_n^2-\Tr(\tra{P_n}\pi'P_n)\geqslant2\sqrt{\Tr(\tra{P_n\pi'P_n})x}+x\right)\nonumber\\
& & \hspace{0.5cm}\leqslant C(p)\tau_p\Tr(\tra{P_n}\pi'P_n)\rho(\pi'P_n)^{p-2}x^{-p/2}\nonumber\\
& & \hspace{0.5cm}\leqslant C(p)\tau_p\Tr(\tra{P_n}P_n)\rho^{p-2}(P_n)x^{-p/2}\ .\nonumber\\
\end{eqnarray*}
Thus, for any $x\in\R$, we define
$$
\psi(x)=\left\{
\begin{array}{ll}
C(p)\tau_p\Tr(\tra{P_n}P_n)\rho^{p-2}(P_n)x^{-p/2}\wedge1 & \text{if }x>0\\
1 & \text{if }x\leqslant0
\end{array}
\right.
$$
and $\psi(x)$ is an upperbound for
$$\Pg\left(n\Vert\pi'P_n\varepsilon\Vert_n^2\geqslant(1+\delta/2)\Tr(\tra{P_n}\pi P_n)+(1+\delta/2)\rho^2(P_n)+(1+2/\delta)x\right)\ .$$
If we take
\begin{eqnarray*}
x & = & \frac{\delta}{\delta+2}\left(\delta\Tr(\tra{P_n}(I_n-\pi)P_n)-\frac{\delta}{2}\Tr(\tra{P_n}\pi P_n)-\left(1+\frac{\delta}{2}\right)\rho^2(P_n)\right)\\
& = & \frac{\delta}{\delta+2}\left(\delta\Tr(\tra{P_n}P_n)-\frac{3\delta}{2}\Tr(\tra{P_n}\pi P_n)-\left(1+\frac{\delta}{2}\right)\rho^2(P_n)\right)\\
& \geqslant & \frac{\delta}{\delta+2}\left(\frac{\delta\Tr(\tra{P_n}P_n)}{4}-\left(1+\frac{\delta}{2}\right)\rho^2(P_n)\right)
\end{eqnarray*}
then we obtain
\begin{eqnarray}
\bar{\Pg}_3 & \leqslant & C'(p,\delta)\tau_p\frac{\Tr(\tra{P_n}P_n)\rho^{p-2}(P_n)}{\left(\delta\Tr(\tra{P_n}P_n)/4-\left(1+\delta/2\right)\rho^2(P_n)\right)_+^{p/2}}\wedge1\nonumber\\
& \leqslant & C''(p,\delta)\tau_p\frac{\Tr(\tra{P_n}P_n)^{1-p/2}\rho^{p-2}(P_n)}{\left(1-2\left(1+2/\delta\right)\rho^2(P_n)/\Tr(\tra{P_n}P_n)\right)_+^{p/2}}\wedge1\label{NG_P3major}
\end{eqnarray}
To conclude, we use (\ref{NG_P1major}), (\ref{NG_P2major}) and (\ref{NG_P3major}) in (\ref{P1P2P3}).
\end{proof}

\bibliographystyle{plain}
\bibliography{biblio}

\end{document}